\numberwithin{equation}{theorem}
\numberwithin{equation}{theorem}
\newcommand{\ins}{\subseteq}
\newcommand{\inj}{\xhookrightarrow{}}
\begin{document}

\title{\emph{F}-intersection flatness of dagger and Berkovich affinoid algebras}
\author{Rankeya Datta}
\address{Department of Mathematics, University of Missouri, 
Columbia, Missouri 65212, USA}
\email{rankeya.datta@missouri.edu}
\thanks{The first author was supported in part by NSF DMS Grant \#2502333 and Simons Foundation grant MP-TSM-00002400.}

\author{Jack J Garzella}
\address{Department of Mathematics, University of California San Diego, La Jolla, California, 92092, USA}
\email{jgarzell@ucsd.edu}
\thanks{The second author was supported in part by the NSF GRFP under Grant \#2038238, and a fellowship from the Sloan Foundation}

\author[Tucker]{Kevin Tucker}
\address{Department of Mathematics, University of Illinois at Chicago, Chicago, IL, USA}
\email{kftucker@uic.edu}
\thanks{The third author was supported in part by NSF DMS Grants \#2200716 and \#2501904.}

\begin{abstract}
    We show, using the techniques developed in \cite{DESTate,DET2023mittagintersectionflat}, that dagger algebras and Tate algebras in the sense of Berkovich in prime characteristic \(p > 0\) have intersection flat Frobenius. Equivalently, if $S$ is such a ring, then $S^{1/p}$ is a flat and Mittag-Leffler $S$-module. As a consequence, we deduce that any ideal-adic completion of a reduced ring that is essentially of finite type over a dagger algebra or a Berkovich Tate algebra in prime characteristic has big test elements from tight closure theory.
\end{abstract}

\maketitle

\section{Introduction}

A long-standing conjecture in prime characteristic commutative algebra is the existence of \emph{big test elements} for an excellent reduced ring $R$ of prime characteristic $p > 0$ \cite{HunekeTightClosureBook}. Big test elements are uniform multipliers in $R$ that can be used to `test' when an element of an arbitrary $R$-module $M$ is in the tight closure $N_M^*$ of any submodule $N$. These special multipliers exist for many commonly occurring classes of excellent rings such as the class of reduced Noetherian rings with finite Frobenius (\emph{$F$-finite} rings) and the class of reduced rings that are essentially of finite type over an excellent local ring of characteristic $p$ \cite{HochsterHunekeTC1,HochsterHunekeFRegularityTestElementsBaseChange}. In dimension $\leq 2$, big test elements are known to exist in full generality \cite{AberbachUniformArtinRees}. In addition, big test elements also exist for excellent rings with universally injective Frobenius \cite{SharpAnExcellentFPureRing}, commonly known as \emph{$F$-pure rings}.

The main result of this paper establishes the existence of big test elements for excellent rings
arising in the theory of Berkovich spaces \cite{BerkovichSpectralTheory, BerkovichEtale} and dagger
spaces \cite{GKDaggeralgebras, le-stum-2022-rigid}; see \autoref{thm:intro-main-thm} for a precise statement. We first provide some context and motivation for our main result. For a non-Archimedean field \((k,|\cdot|)\) of characteristic \(p\), reduced affinoid algebras over \(k\) are excellent \cite{KiehlTate} but not \(F\)-finite, essentially of finite type over an excellent local ring, or \(F\)-pure in general. Thus, it is natural to ask whether such rings have big test elements. The known proofs of the existence of big test elements for reduced $F$-finite Noetherian rings and reduced rings essentially of finite type over an excellent local ring rely on the existence of non-zero $R$-linear maps $R^{1/p} \to R$ \cite{HochsterHunekeTC1,HochsterHunekeFRegularityTestElementsBaseChange}. In fact, work of Aberbach implies that any excellent reduced $R$ with a non-zero $R$ linear map $R^{1/p} \to R$ has test elements \cite{AberbachUniformArtinRees}. However, in \cite{DattaMurayamaTate}, the first author and Murayama give examples of non-Archimedean fields $k$ of characteristic $p > 0$ for which the Tate algebra \(T_{n}(k)\) has no non-zero \(T_{n}(k)\)-linear maps \(T_{n}(k)^{1 / p} \xrightarrow{} T_n(k)\) for all integers $n > 0$. Consequently, some of the previous strategies that were used to show the existence of test elements do not appear to apply to homomorphic images of $T_n(k)$, i.e., the \emph{affinoid algebras}. 

A different path forward is suggested by the work of Sharp \cite{SharpBigTestElements}, who shows that if a reduced $R$ is a homomorphic image of an excellent regular ring $S$ of characteristic $p$ such that $S^{1/p}$ as an $S$-module satisfies a technical property known as \emph{intersection flatness}, then $R$ has big test elements. For the expert, we note that the intersection flatness condition is equivalent to $S^{1/p}$ being a flat and Mittag-Leffler $S$-module (see \cite{DESTate,DET2023mittagintersectionflat}).  In \cite{DESTate}, the first and third authors, Epstein and Schwede use Sharp's result to show that $T_n(k)^{1/p}$ is intersection flat over $T_n(k)$ for any non-Archimedean field $k$ of characteristic $p$, thereby establishing that reduced affinoid algebras of prime characteristic have big test elements. Their method uses functional analytic properties of non-Archimedean fields, topological properties of Tate algebras and a robust theory of intersection flatness developed by the first and third authors and Epstein \cite{DET2023mittagintersectionflat}, building on and unifying results obtained in \cite{rg71,OhmRushcontent,HochsterJeffriesintflatness}. 

The affinoid algebras alluded to above occur in Tate's theory of \textit{rigid analytic spaces} \cite{TateRigid}.
However, there are other theories of analytic geometry that give rise to excellent rings that are not affinoid in the sense of Tate. First, there is the theory of \textit{Berkovich Spaces}, in which Berkovich defines a variant \(T_{n,\rho}(k)\) of the Tate algebra, for any $n$-tuple of positive real numbers $\rho = (\rho_1,\dots,\rho_n)$ \cite{BerkovichSpectralTheory,BerkovichEtale}. If \(\rho \in (\Gamma_{k})_{>0}^n\), where \(\Gamma_{k}\) is the value group of the non-Archimedean field \(k\), then \(T_{n,\rho}(k)\) is affinoid in the sense of Tate. Moreover, if $\rho = (1,\dots,1)$, then $T_{n,\rho}(k) = T_n(k)$, the classical Tate algebra. However, if \(\rho \notin \Gamma_{k}^n\), then $T_{n,\rho}(k)$ is a new type of excellent \cite{DucrosBerkovichExcellent} regular ring. We will call homomorphic images of \(T_{n,\rho}(k)\) \emph{affinoid algebras in the sense of Berkovich}, or simply \textit{Berkovich affinoid algebras}. Berkovich spaces have witnessed many application in rigid geometry, including the proof of the local Langlands theorem \cite{harris-taylor-2001-local-langlands}, applications to resolution of singularities \cite{TemkinInseparableUniformization}, and an early version of perfectoid geometry \cite{kedlaya-liu-2015-relative-p-adic}.

Secondly, related to the theory of Berkovich spaces is the theory of \textit{dagger spaces}, or analytic spaces with overconvergent structure sheaves. The algebraic building blocks of dagger spaces are the rings \(T_{n}(k)^{\dagger} \coloneqq \colim_{r \in (\Gamma_k)_{>1}} T_{n,\rho_r}(k)\), where $\rho_r = (r,\dots,r)$. These rings appear to have been first considered by G{\"u}ntzer \cite{GuntzerOverconvergent} and can be interpreted as rings of analytic functions which converge on a polydisk of some radius larger than $1$. Homomorphic images of \(T_{n}(k)^{\dagger}\) are called \textit{dagger affinoid algebras}. Unlike \(T_{n}(k)\) and \(T_{n,\rho}(k)\),  $T_n(k)^\dagger$ is not a Banach $k$-algebra since it is not complete with respect to the canonical Gauss norm that one can define on it.  Nevertheless, one can use dagger affinoid algebras to mimic other constructions in non-Archimedean analytic geometry. For example, Grosse-Kl\"onne describes
a version of the theory of Tate's rigid analytic
spaces in terms of dagger affinoid algebras \cite{GKDaggeralgebras}
. Dagger affinoid algebras have been used in the theory of rigid cohomology \cite{le-stum-2007-rigid, le-stum-2022-rigid}, and for computing zeta functions of varieties of finite fields \cite{kedlaya-2001-algorithm}.

Our main result shows that big test elements exist for reduced Berkovich affinoids and reduced dagger affinoids. In fact, we obtain:

\begin{theorem}(\autoref{thm:Dagger-Tate-FIF}, \autoref{cor:Test-elements-dagger-affinoid}, \autoref{cor:berktate:FIF})
\label{thm:intro-main-thm}
Let $(k,|\cdot|)$ be a non-Archimedean field of characteristic $p > 0$. Then for all integers $n > 0$, and indeterminates $\underline{x} = x_1,\dots,x_r$ and $\underline{y} = y_1,\dots,y_s$ we have:
    \begin{enumerate}[(1)]
        \item $(T_{n,\rho}(k)[\underline{x}]\llbracket\underline{y}
\rrbracket)^{1/p}$ is an intersection flat $T_{n,\rho}(k)[\underline{x}]\llbracket\underline{y}
\rrbracket$-algebra.
        \item $(T_n(k)^\dagger[\underline{x}]\llbracket\underline{y}\rrbracket)^{1/p}$ is an intersection flat $T_n(k)^\dagger[\underline{x}]\llbracket\underline{y}
\rrbracket$-algebra.
    \end{enumerate}
    Consequently, if $R$ is a reduced ring that is essentially of finite type over $T_{n,\rho}(k)$ or $T_n(k)^\dagger$ and $I$ is any ideal of $R$ (including $I = 0$), then the $I$-adic completion $\widehat{R}^I$ has big test elements.
\end{theorem}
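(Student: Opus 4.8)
The plan is to prove the two intersection-flatness statements (1) and (2) for the rings $S_\rho = T_{n,\rho}(k)[\underline{x}]\llbracket\underline{y}\rrbracket$ and $S_\dagger = T_n(k)^\dagger[\underline{x}]\llbracket\underline{y}\rrbracket$, and then to read off the test-element assertion (3) from Sharp's criterion. Throughout I use the equivalence from \cite{DESTate, DET2023mittagintersectionflat}: for a Noetherian ring $S$ of characteristic $p$, the Frobenius pushforward $S^{1/p}$ is intersection flat over $S$ if and only if $S^{1/p}$ is a flat and Mittag--Leffler $S$-module. Flatness is immediate and uniform across both cases: each of $T_{n,\rho}(k)$ and $T_n(k)^\dagger$ is Noetherian and regular (as recalled in the introduction; see also \cite{DucrosBerkovichExcellent, GKDaggeralgebras}), both properties persist after adjoining the polynomial variables $\underline{x}$ and passing to the $\underline{y}$-adic power series, and Kunz's theorem then makes $S_\rho^{1/p}$ and $S_\dagger^{1/p}$ flat. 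Hence the whole content of (1) and (2) is the Mittag--Leffler property.

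The core is the base case $S = T_{n,\rho}(k)$, before any variables are adjoined. The structural observation I would exploit is the ring isomorphism $T_{n,\rho}(k)^{1/p} \cong T_{n,\rho^{1/p}}(k^{1/p})$, where $\rho^{1/p} = (\rho_1^{1/p}, \dots, \rho_n^{1/p})$: extracting $p$-th roots of a Berkovich Tate algebra again yields a Berkovich Tate algebra, now over $k^{1/p}$ and with rescaled polyradius, the unique extension of $|\cdot|$ to $k^{1/p}$ supplying the Gauss norm. This factors the Frobenius inclusion $T_{n,\rho}(k) \hookrightarrow T_{n,\rho}(k)^{1/p}$ into (a) adjunction of the $p$-th roots $x_i^{1/p}$, a finite free step that manifestly preserves the Mittag--Leffler property, and (b) completed base change along the field inclusion $k \hookrightarrow k^{1/p}$. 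For (b) the functional-analytic input---the mechanism of \cite{DESTate}---is a sufficiently strong orthogonality property of $k^{1/p}$ as a Banach space over $k$, presenting $k^{1/p}\,\widehat{\otimes}_k\, T_{n,\rho}(k)$ as a topologically free $T_{n,\rho}(k)$-module on a norm-orthogonal basis $\{e_i\}$; one then verifies the defining intersection identity $\bigcap_\lambda \bigl(S^{1/p} \otimes_S N_\lambda\bigr) = S^{1/p} \otimes_S \bigl(\bigcap_\lambda N_\lambda\bigr)$ directly in these coordinates, using that finitely generated submodules of such Banach modules are closed. For $\rho \in (\Gamma_k)_{>0}^n$ this reduces to the classical Tate case already treated in \cite{DESTate}; the work is to check that the Gauss-norm and orthogonal-basis bookkeeping goes through when $\rho \notin \Gamma_k^n$, where the monomials $\underline{x}^\nu$ remain a norm-orthogonal basis but the value-group combinatorics differ.

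Granting the base case, I would propagate to the full rings using the stability results of \cite{DET2023mittagintersectionflat}. Adjoining the polynomial variables is handled by two preservations: polynomial base change preserves intersection flat Frobenius, and the finite free extension $S^{1/p}[\underline{x}] \hookrightarrow (S[\underline{x}])^{1/p} = S^{1/p}[\underline{x}^{1/p}]$ preserves it as well; passing to the $\underline{y}$-adic power series is exactly the power-series stability of intersection flat Frobenius established there. This settles part (1). For the dagger ring I would write $T_n(k)^\dagger = \colim_{r > 1} T_{n,\rho_r}(k)$, and correspondingly $T_n(k)^{\dagger,1/p} = \colim_{r > 1} T_{n,\rho_r}(k)^{1/p}$, and transport the Mittag--Leffler property across this overconvergent union before adjoining $\underline{x}, \underline{y}$ as above. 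This colimit step is where I expect the main obstacle: the Mittag--Leffler property is not stable under arbitrary filtered colimits, and $T_n(k)^\dagger$ is not Banach (it is incomplete for its Gauss norm), so the Banach-space arguments of the base case do not apply on the nose. I anticipate needing the specific geometry of the overconvergent system---injective, flat transition maps with controlled norms---to assemble the orthogonal data at the finite radii into a single presentation of $T_n(k)^{\dagger,1/p}$ over $T_n(k)^\dagger$ and to verify the intersection identity at the level of the colimit.

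Finally, (3) follows formally. If $R$ is reduced and essentially of finite type over $A \in \{T_{n,\rho}(k),\, T_n(k)^\dagger\}$, then excellence of $A$ makes $R$ and its $I$-adic completion $\widehat{R}^I$ excellent and reduced, and writing $I = (f_1, \dots, f_s)$ exhibits $\widehat{R}^I$ as a homomorphic image of a localization of $A[\underline{x}]\llbracket\underline{y}\rrbracket$, the power-series variables $\underline{y}$ absorbing the completion. That localization is excellent, regular, and---by (1)/(2) together with stability of intersection flat Frobenius under localization \cite{DET2023mittagintersectionflat}---has intersection flat Frobenius. Sharp's theorem \cite{SharpBigTestElements} then yields big test elements for $\widehat{R}^I$, completing the proof.
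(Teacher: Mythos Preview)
Your architecture for part (3) matches the paper: write $\widehat{R}^I$ as a quotient of a localization of $A[\underline{x}]\llbracket\underline{y}\rrbracket$ for $A \in \{T_{n,\rho}(k), T_n(k)^\dagger\}$, invoke stability of $F$-intersection flatness under localization, and apply Sharp's criterion. For parts (1) and (2), however, there are substantive differences.

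For part (1), your factorization of Frobenius through $T_{n,\rho}(k^{1/p})$ and the observation that $(T_{n,\rho}(k))^{1/p}$ is finite free over $T_{n,\rho}(k^{1/p})$ agree with the paper. The gap is in your step (b): you posit an orthogonal basis of $k^{1/p}$ over $k$, but for a general non-Archimedean $k$ there is no reason such a basis exists (neither spherical completeness of $k$ nor countability of $[k^{1/p}:k]$ is assumed). The paper, following \cite{DESTate}, does not handle $k \hookrightarrow k^{1/p}$ directly. Instead it shows that for every extension $k \hookrightarrow \ell$ with $\ell$ admitting a dense $k$-subspace of \emph{countable} basis (or $k$ spherically complete), the map $T_{n,\rho}(k)[\underline{x}]\llbracket\underline{y}\rrbracket \to T_{n,\rho}(\ell)[\underline{x}]\llbracket\underline{y}\rrbracket$ is Ohm--Rush trace; it then writes $T_{n,\rho}(\widehat{\overline{k}})[\underline{x}]\llbracket\underline{y}\rrbracket$ as a filtered union of such $T_{n,\rho}(\ell)[\underline{x}]\llbracket\underline{y}\rrbracket$ with faithfully flat transition maps, making it intersection flat over $T_{n,\rho}(k)[\underline{x}]\llbracket\underline{y}\rrbracket$; finally, since $T_{n,\rho}(k^{1/p})[\underline{x}]\llbracket\underline{y}\rrbracket \hookrightarrow T_{n,\rho}(\widehat{\overline{k}})[\underline{x}]\llbracket\underline{y}\rrbracket$ is faithfully flat and hence pure, intersection flatness descends. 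This detour through $\widehat{\overline{k}}$ is precisely what stands in for your missing orthogonal basis.

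For part (2), your colimit approach is not what the paper does, and the obstacle you yourself flag is genuine. The paper sidesteps it by descent: it establishes that $T_n(k)^\dagger \hookrightarrow T_n(k)$ is faithfully flat and \emph{regular} (as a colimit of the regular maps $T_{n,\rho_r}(k) \hookrightarrow T_n(k)$, via Popescu), hence so is $T_n(k)^\dagger[\underline{x}]\llbracket\underline{y}\rrbracket \to T_n(k)[\underline{x}]\llbracket\underline{y}\rrbracket$ after polynomial base change and $(\underline{y})$-adic completion. Regularity makes the relative Frobenius flat (Radu--Andr\'e), hence pure, and the $F$-intersection flatness of $T_n(k)[\underline{x}]\llbracket\underline{y}\rrbracket$, already known from \cite{DESTate}, descends to $T_n(k)^\dagger[\underline{x}]\llbracket\underline{y}\rrbracket$. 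No colimit of Mittag--Leffler modules is ever taken; the overconvergent presentation is used only to prove regularity of the map to the classical Tate algebra, after which the dagger ring inherits $F$-intersection flatness for free.
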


For the first case, we adapt the methods of \cite{DESTate},
which establishes the intersection flatness of  $T_n(k)[\underline{x}]\llbracket\underline{y}\rrbracket^{1/p}$ over $T_n(k)[\underline{x}]\llbracket\underline{y}\rrbracket$, to the setting of \(T_{n,\rho}(k)[\underline{x}]\llbracket\underline{y}\rrbracket\). For the second case, we use a descent argument. Along the way, we give a detailed proof of the excellence of dagger algebras for the reader's convenience, since this fact is claimed without proof in \cite{GKDaggeralgebras}. 

\section{Preliminaries}

For a ring $A$, an ideal $I$ and an $A$-module $M$, we always let $\widehat{M}^I$ denote the $I$-adic completion of $M$. If $A$ is Noetherian, then it is well-known that the canonical map $A \to \widehat{A}^I$ is flat. Moreover, if $\frp$ is a prime ideal of $A$ containing $I$, then $\frp \widehat{A}^I$ is a prime ideal of $\widehat{A}^I$ (containing $I\widehat{A}^I$) such that $\widehat{A}^I/\frp\widehat{A}^I \cong A/\frp$. Indeed, this follows upon applying $\otimes_A \widehat{A}^I$ to the exact sequence of finitely generated $A$-modules $0 \to \frp \to A \to A/\frp \to 0$, and noting that $A/\frp \otimes_A \widehat{A}^I \cong \widehat{A/\frp}^I \cong A/\frp$. Conversely, if $\frq \in \Spec(\widehat{A}^I)$ such that $\frq \supset I\widehat{A}^I$, then since the induced map $A/I \to \widehat{A}^I/I\widehat{A}^I$ is an isomorphism, it follows that $\frq$ is expanded from a prime ideal $\frp$ of $A$ that contains $I$, and hence, $\kappa(\frq) \cong \kappa(\frp)$.

\subsection{Regular ring maps and excellent rings}
\begin{definition}\cite[\href{https://stacks.math.columbia.edu/tag/07BZ}{Tag 07BZ}]{stacks-project}
    \label{def:regular-homomorphism}
    A ring homomophism $A \to B$ is \emph{regular} if the map is flat and if for all $\frp \in \Spec(A)$, the fiber $B \otimes_A \kappa(\frp)$ is a Noetherian geometrically regular $\kappa(\frp)$-algebra.
\end{definition}

We will frequently use Popescu's desingularization theorem for regular maps of Noetherian rings.

\begin{theorem}\cite{PopescuDesingularization, PopescuDesingularizationII, PopescuDesingularizationLetterEditor}
\label{thm:Popescu-desingularization}
Let $\varphi \colon A \to B$ be a flat map of Noetherian rings. Then $\varphi$ is regular if and only if $B$ can be expressed as a filtered colimit of smooth $A$-algebras.
\end{theorem}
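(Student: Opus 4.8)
The plan is to treat the two implications separately, since one is formal and the other is the entire content of Popescu's theorem. I first observe that in \autoref{def:regular-homomorphism} flatness is already subsumed in regularity, and that any filtered colimit of smooth $A$-algebras is automatically flat (smoothness implies flatness, and flatness is preserved under filtered colimits). Thus on both sides of the equivalence the flatness hypothesis is redundant, and the real content is matching the geometric regularity of the fibers of $\varphi$ with the existence of an ind-smooth structure on $B$.

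For the easy direction (ind-smooth $\Rightarrow$ regular), suppose $B = \colim_i B_i$ with each $B_i$ smooth over $A$. Flatness follows as noted. For $\frp \in \Spec(A)$, base change commutes with filtered colimits, so the fiber $B \otimes_A \kappa(\frp) = \colim_i (B_i \otimes_A \kappa(\frp))$ is a filtered colimit of smooth $\kappa(\frp)$-algebras; it is a quotient of a localization of the Noetherian ring $B$, hence itself Noetherian. To check geometric regularity I base change along a finite (or arbitrary) field extension $\kappa(\frp) \to K$, obtaining again a filtered colimit of smooth, hence regular, $K$-algebras; a Noetherian ring that is a filtered colimit of regular rings is regular, since each local ring is a filtered colimit of regular local rings. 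Hence the fibers are geometrically regular and $\varphi$ is regular.

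For the hard direction (regular $\Rightarrow$ ind-smooth), the strategy has two layers. First I reduce the colimit statement to a \emph{factorization property}: $B$ is a filtered colimit of smooth $A$-algebras if and only if every $A$-algebra map $C \to B$ from a finitely presented $A$-algebra $C$ factors as $C \to D \to B$ with $D$ smooth over $A$. The ``only if'' is immediate, and the ``if'' holds because the smooth $A$-algebras mapping to $B$ then form a filtered category whose colimit recovers $B$, as $B$ is the filtered colimit of its finitely presented approximations, each of which factors through a smooth one. Second, and this is the heart, I prove the \emph{General N\'eron Desingularization} statement: for regular $A \to B$ every such $C \to B$ factors through a smooth $A$-algebra. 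I would argue by induction on a measure of non-smoothness of a presentation $C = A[x_1,\dots,x_n]/(f_1,\dots,f_c)$, controlled by the Jacobian (Lichtenbaum--Schlessinger) ideal $H_{C/A} \subseteq C$ cutting out the non-smooth locus of $\Spec(C) \to \Spec(A)$. Geometric regularity of the fibers of $\varphi$ is exactly what guarantees, via the fiberwise Jacobian criterion, that the image of $H_{C/A}$ in $B$ is not too small; one then performs a N\'eron-type modification of $C$, adjoining auxiliary variables and dividing the defining relations by suitable elements of the Jacobian ideal, to produce $C'$ through which $C \to B$ still factors but whose non-smooth locus is strictly smaller. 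Iterating terminates in a smooth $D$.

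The main obstacle is precisely this desingularization step in the presence of inseparability. When the residue fields $\kappa(\frp)$ are imperfect, geometric regularity is strictly stronger than regularity, and N\'eron's original one-dimensional argument, which relies on separability, breaks down. Popescu's essential contribution is to make the inductive step go through regardless, by a delicate choice of standard-smooth presentations and of elements of the Jacobian ideal that absorb the inseparable behavior. Reconstructing this step honestly (as later clarified by Swan and Spivakovsky) is the part I expect to be genuinely hard, and a complete treatment would require the full apparatus of \cite{PopescuDesingularization, PopescuDesingularizationII, PopescuDesingularizationLetterEditor}.
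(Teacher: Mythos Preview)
The paper does not actually prove \autoref{thm:Popescu-desingularization}; it merely states the result with citations to \cite{PopescuDesingularization, PopescuDesingularizationII, PopescuDesingularizationLetterEditor} and uses it as a black box (notably in \autoref{lem:colimit-indsmooth-is-indsmooth} and \autoref{cor:colimit-regular-is-regular}). So there is no proof in the paper to compare your proposal against.

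On its own terms, your sketch is accurate in outline. The easy direction is essentially right, though your justification ``a Noetherian ring that is a filtered colimit of regular rings is regular, since each local ring is a filtered colimit of regular local rings'' is a bit loose; the cleanest way to finish is via Serre's criterion, observing that a finite projective resolution over some $B_i \otimes_A K$ base-changes to one over the colimit because the transition maps are flat. For the hard direction you have correctly isolated the reduction to the factorization property and identified the N\'eron--Popescu desingularization step, governed by the singular locus ideal $H_{C/A}$, as the nontrivial content; you are also right that handling inseparable residue fields is the crux and that an honest proof requires the full machinery of the cited papers (or the later expositions of Swan and Spivakovsky). In short, your proposal is a faithful roadmap, not a proof, and you are candid about this; but since the paper itself offers no argument whatsoever, there is nothing further to compare.
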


\begin{definition}
\label{def:G-quasi-excellent}
Let $A$ be a Noetherian ring. We say $A$ is 
\begin{enumerate}
    \item a \emph{$G$-ring} or a \emph{Grothendieck ring} if for all $\frp \in \Spec(A)$, the canonical completion map $A_\frp \to \widehat{A_\frp}$ is regular.
    \item a \emph{quasi-excellent ring} if $A$ is a $G$-ring and if every finite type $A$-algebra has an open regular locus.
    \item an \emph{excellent ring} if $A$ is quasi-excellent and universally catenary.
\end{enumerate}
\end{definition}

\begin{remark}
\label{rem:CM-quasi-excellent}
Since Cohen-Macaulay rings are universally catenary \cite[\href{https://stacks.math.columbia.edu/tag/00NM}{Tag 00NM}]{stacks-project}, a quasi-excellent ring that is a homomorphic image of a Cohen-Macaulay ring is automatically excellent.
\end{remark}

The property of being geometrically regular can be tested at maximal ideals provided that the domain is a $G$-ring. This is known as the \textit{Grothendieck localization problem} for geometric regularity \cite{AndreGrothendieckLocalizationRegular}. The precise statement is:

\begin{theorem}\cite[Thm.\ on Pg 297]{AndreGrothendieckLocalizationRegular}
    \label{thm:Grothendieck-localization-regular}
    Let $\varphi \colon (A,\fram_A,\kappa_A) \to (B,\fram_B,\kappa_B)$ be a flat local homomorphism of Noetherian local rings such that $B/\fram_AB$ is a geometrically regular $\kappa_A$-algebra. If $A$ is a $G$-ring then $\varphi$ is a regular map.
\end{theorem}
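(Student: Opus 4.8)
The plan is to recast the statement as the vanishing of Andr\'e--Quillen homology and then to propagate that vanishing from the closed fibre to every fibre using the $G$-ring hypothesis. Recall that for a ring map $A \to B$ and a $B$-module $M$ one has the Andr\'e--Quillen homology modules $H_n(A,B,M)$, and that a flat map $\varphi \colon A \to B$ of Noetherian rings is regular precisely when $H_1(A,B,\kappa(\frq)) = 0$ for every $\frq \in \Spec(B)$. Indeed, by flat base change along $A \to A_{\frp}$ (where $\frp = \frq \cap A$) followed by passage to the fibre, $H_1(A,B,\kappa(\frq)) \cong H_1(\kappa(\frp),\, B\otimes_A \kappa(\frp),\, \kappa(\frq))$, and a Noetherian algebra over a field is geometrically regular if and only if its first Andr\'e--Quillen homology with coefficients in each residue field vanishes. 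Under this dictionary the hypothesis that $B/\fram_A B$ is geometrically regular over $\kappa_A$ says exactly that $H_1(A,B,\kappa(\frq)) = 0$ for every $\frq$ lying over $\fram_A$, and the goal is to remove the restriction on $\frq$.

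Next I would bring in the completion. Since $A$ is a $G$-ring and is local, the map $A \to \hat{A}$ is regular, so that $H_n(A,\hat A,M) = 0$ for all $n \geq 1$ and all modules $M$; by faithful flatness of $A \to \hat A$ it is harmless to test regularity after the base change $\hat A \to \hat A \otimes_A B$, the point being that regularity is stable under base change and satisfies faithfully flat descent along $A \to \hat A$. The advantage of working over the complete base $\hat A$ is that its fibres are tightly controlled by the single geometrically regular closed fibre: Cohen's structure theorem gives a handle on $\hat A$, and the Jacobi--Zariski long exact sequences attached to the towers $A \to \hat A \to \hat A\otimes_A B$ and $A \to B \to \hat A \otimes_A B$ let me transport the vanishing of $H_1$ at the closed point, together with the vanishing of $H_n(A,\hat A,-)$, into vanishing statements over $\hat A$.

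The crux, and the step I expect to be the main obstacle, is the localization argument: showing that $H_1 = 0$ at the maximal ideal forces $H_1(A,B,\kappa(\frq)) = 0$ at every prime $\frq$ of $B$. This is the heart of Andr\'e's solution of the Grothendieck localization problem and cannot be obtained by a naive semicontinuity argument, precisely because the intermediate ring $\hat A \otimes_A B$ need not be Noetherian. Controlling this non-Noetherian base change, and establishing the requisite rigidity and specialization behaviour of Andr\'e--Quillen homology, is where the genuine difficulty lies and where the $G$-ring hypothesis is indispensable; the conclusion is simply false without it.

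In prime characteristic $p > 0$, the setting relevant to this paper, I would run the same strategy through the Radu--Andr\'e description of regularity: a flat map of Noetherian $\bF_p$-algebras is regular if and only if its relative Frobenius is flat. The closed-fibre hypothesis yields flatness of the relative Frobenius over $\kappa_A$, and one propagates flatness to all fibres via the local criterion for flatness after reducing to the complete base through the regularity of $A \to \hat A$. This replaces the Andr\'e--Quillen homology bookkeeping by flatness-of-Frobenius bookkeeping, but meets the same essential obstacle at the localization and completion step.
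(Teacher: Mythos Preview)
The paper does not give a proof of this theorem: it is stated with the citation \cite[Thm.\ on Pg 297]{AndreGrothendieckLocalizationRegular} and then immediately used to derive \autoref{cor:Grothendieck-localization-regular-nonlocal}. So there is nothing in the paper to compare your proposal against; the result is simply imported from Andr\'e.

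As a sketch of Andr\'e's actual argument, your outline is broadly faithful. The reformulation in terms of the vanishing of $H_1(A,B,\kappa(\frq))$, the use of the Jacobi--Zariski sequence and the regularity of $A \to \widehat{A}$ to pass to the complete base, and your identification of the ``crux'' (propagating vanishing from the closed fibre to all primes, where the intermediate tensor product may fail to be Noetherian) all match the structure of Andr\'e's proof. You are also right that this step is where the genuine depth lies and that it does not follow from elementary semicontinuity; Andr\'e handles it via his rigidity and change-of-rings machinery for cotangent homology. Your alternative characteristic-$p$ sketch via Radu--Andr\'e flatness of the relative Frobenius is a reasonable heuristic, but note that the paper only invokes the theorem as a black box (in arbitrary characteristic, for \autoref{cor:Grothendieck-localization-regular-nonlocal} and \autoref{prop:completions-regular-maps}), so there is no need to supply a proof here at all.
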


A non-local corollary of the above result that will be useful for us is:

\begin{corollary}
\label{cor:Grothendieck-localization-regular-nonlocal}
Let $\varphi \colon A \to B$ be a flat map of Noetherian rings. Suppose for all maximal ideals $\fram$ of $B$, if $\frp \coloneqq \varphi^{-1}(\fram)$, then $B \otimes_A \kappa(\frp)$ is a geometrically regular $\kappa(\frp)$-algebra. If $A$ is a $G$-ring, then $\varphi$ is a regular map.
\end{corollary}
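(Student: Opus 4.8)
The plan is to deduce this from the local statement \autoref{thm:Grothendieck-localization-regular} by localizing and then spreading out. Since $\varphi$ is flat and the fibers of a map of Noetherian rings are Noetherian, by \autoref{def:regular-homomorphism} it is enough to show that for every $\frp \in \Spec(A)$ the fiber $C := B \otimes_A \kappa(\frp)$ is a geometrically regular $\kappa(\frp)$-algebra. I would first record the elementary reduction that geometric regularity of a Noetherian $\kappa(\frp)$-algebra can be tested at its maximal ideals: regularity of a Noetherian ring is detected at maximal ideals, and for a finite extension $\kappa(\frp) \subseteq L$ the map $C \to C \otimes_{\kappa(\frp)} L$ is finite, hence integral, so every maximal ideal of $C \otimes_{\kappa(\frp)} L$ lies over a maximal ideal $\fram$ of $C$ with local ring a localization of $C_{\fram} \otimes_{\kappa(\frp)} L$. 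Thus it suffices to prove that $C_{\fram}$ is geometrically regular over $\kappa(\frp)$ for each maximal ideal $\fram$ of $C$.

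Fix such an $\fram$. It corresponds to a prime $\frq$ of $B$ with $\varphi^{-1}(\frq) = \frp$; crucially, $\frq$ need not be maximal in $B$, so the hypothesis does not apply to it directly. To remedy this, I would choose a maximal ideal $\frn$ of $B$ with $\frq \subseteq \frn$ and put $\frp' := \varphi^{-1}(\frn)$, so that $\frp \subseteq \frp'$. The hypothesis applied to $\frn$ gives that $B \otimes_A \kappa(\frp')$ is geometrically regular over $\kappa(\frp')$, hence so is its localization $B_{\frn}/\frp' B_{\frn}$. A localization of a $G$-ring is again a $G$-ring (immediate from \autoref{def:G-quasi-excellent}, since its local rings occur among those of $A$), so $A_{\frp'}$ is a $G$-ring, and $A_{\frp'} \to B_{\frn}$ is a flat local homomorphism of Noetherian local rings whose closed fiber is geometrically regular over the residue field. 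By \autoref{thm:Grothendieck-localization-regular}, $A_{\frp'} \to B_{\frn}$ is a regular map.

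It remains to descend from $\frp'$ to $\frp$. Since $\frp \subseteq \frp'$, the extension $\frp A_{\frp'}$ is a prime of $A_{\frp'}$, and the fiber of the regular map $A_{\frp'} \to B_{\frn}$ over it, namely $B_{\frn} \otimes_{A_{\frp'}} \kappa(\frp)$, is geometrically regular over $\kappa(\frp)$. A direct check identifies this with a localization of $C = B \otimes_A \kappa(\frp)$ (one inverts $B \setminus \frn$; the images of $A \setminus \frp$ are already units in $C$), and because $\frq \subseteq \frn$ the maximal ideal $\fram$ is not inverted, so $C_{\fram}$ is a further localization of $B_{\frn} \otimes_{A_{\frp'}} \kappa(\frp)$ and hence geometrically regular over $\kappa(\frp)$. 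Since $\fram$, and then $\frp$, were arbitrary, $\varphi$ has geometrically regular fibers and is therefore regular.

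The only non-formal ingredient is \autoref{thm:Grothendieck-localization-regular}, which is precisely what permits descending regularity of $A_{\frp'} \to B_{\frn}$ from the closed fiber to the fiber over the smaller prime $\frp$. I expect the main obstacle to be purely organizational: the maximal ideal $\fram$ of the fiber $B \otimes_A \kappa(\frp)$ generally does \emph{not} arise from a maximal ideal of $B$, which forces the detour through $\frn$ and $\frp' \supseteq \frp$, and one must then keep careful track of which ring each of $\frp, \frp', \frq, \frn, \fram$ inhabits and verify that the comparison maps between the relevant localizations behave as claimed.
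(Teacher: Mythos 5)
Your proof is correct and follows essentially the same route as the paper: both reduce to applying \autoref{thm:Grothendieck-localization-regular} to the flat local maps $A_{\varphi^{-1}(\frn)} \to B_{\frn}$ at maximal ideals $\frn$ of $B$, whose closed fibers are localizations of the fibers assumed geometrically regular. The only difference is that the paper delegates the globalization step to a cited Stacks project lemma (Tag 07C0, part (3)), whereas you carry out that bookkeeping by hand (testing geometric regularity of each fiber at its maximal ideals and passing through a maximal ideal $\frn \supseteq \frq$ of $B$); your version of that argument is accurate.
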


\begin{proof}
Since $B$ is Noetherian, by \cite[\href{https://stacks.math.columbia.edu/tag/07C0}{Tag 07C0, part (3)}]{stacks-project} it suffices to show that the induced local homomorphism $A_{\frp} \to B_{\fram}$ is regular. Note that $B_\fram \otimes_{A_\frp} \kappa(\frp)$ is a geometrically regular $\kappa(\frp)$-algebra because it is a further localization of $B \otimes_A \kappa(\frp)$. Thus, since $A_\frp$ is a $G$-ring, we get $A_{\frp} \to B_{\fram}$ is regular by \autoref{thm:Grothendieck-localization-regular}.
\end{proof}

We record an application of the previous corollary we will need later.

\begin{proposition}
\label{prop:completions-regular-maps}
Let $\varphi \colon A \to B$ be a regular map of Noetherian rings such that $A$ is a $G$-ring. If $I$ is an ideal of $A$, then the induced map on $I$-adic completion $\widehat{\varphi}^I \colon \widehat{A}^I \to \widehat{B}^{IB}$ is regular as well.
\end{proposition}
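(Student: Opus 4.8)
The plan is to verify the hypotheses of \autoref{cor:Grothendieck-localization-regular-nonlocal} for the map $\widehat{\varphi}^I$. First I would record that everything in sight is Noetherian: $A$ and $B$ are Noetherian by hypothesis, and $I$-adic completions of Noetherian rings are Noetherian, so $\widehat{A}^I$ and $\widehat{B}^{IB}$ are Noetherian. Next, flatness of $\widehat{\varphi}^I$: the square
\begin{equation*}
\begin{CD}
A @>\varphi>> B\\
@VVV @VVV\\
\widehat{A}^I @>\widehat{\varphi}^I>> \widehat{B}^{IB}
\end{CD}
\end{equation*}
commutes, the vertical maps are flat since $A$ and $B$ are Noetherian, and $\varphi$ is flat; moreover $\widehat{B}^{IB} \cong B \otimes_A \widehat{A}^I$ because $B$ is Noetherian and $I$-adic completion commutes with finite tensor (more precisely, $\widehat{B}^{IB} = \varprojlim B/I^nB = \varprojlim (B \otimes_A A/I^n) \cong B \otimes_A \widehat{A}^I$, using that $B$ is a flat Noetherian $A$-algebra so the completion of the finitely generated module is the base change). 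Hence $\widehat{\varphi}^I$ is flat as a base change of $\varphi$ along $A \to \widehat{A}^I$. Since $A$ is a $G$-ring, so is $\widehat{A}^I$: this is standard (completions of $G$-rings along an ideal are $G$-rings --- see \cite[\href{https://stacks.math.columbia.edu/tag/07PV}{Tag 07PV}]{stacks-project} or the fact that $\widehat{A}^I$ is a filtered situation reducible to the local case via \autoref{thm:Grothendieck-localization-regular}).

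It remains to check the geometric-regularity condition on closed fibers. Let $\fram$ be a maximal ideal of $\widehat{B}^{IB}$; then $\fram \supseteq IB\widehat{B}^{IB}$ since $IB$ lies in the Jacobson radical of $\widehat{B}^{IB}$ (the completion is $IB$-adically complete). As explained in the preliminaries, $\fram$ is expanded from a maximal ideal $\fran$ of $B$ containing $IB$, with $\kappa(\fram) \cong \kappa(\fran)$. Set $\frq \coloneqq \widehat{\varphi}^I{}^{-1}(\fram)$ and $\frp \coloneqq \varphi^{-1}(\fran)$; then $\frq$ is expanded from $\frp$, again with $\kappa(\frq) \cong \kappa(\frp)$, and $\frp \supseteq I$. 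Now
\begin{equation*}
\widehat{B}^{IB} \otimes_{\widehat{A}^I} \kappa(\frq) \cong \left(B \otimes_A \widehat{A}^I\right) \otimes_{\widehat{A}^I} \kappa(\frq) \cong B \otimes_A \kappa(\frq).
\end{equation*}
Since $\kappa(\frq)$ is the residue field at a prime lying over $\frp$, base change along $\kappa(\frp) \to \kappa(\frq)$ gives that $B \otimes_A \kappa(\frq)$ is a localization of $\left(B \otimes_A \kappa(\frp)\right) \otimes_{\kappa(\frp)} \kappa(\frq)$; but $B \otimes_A \kappa(\frp)$ is geometrically regular over $\kappa(\frp)$ because $\varphi$ is regular, and geometric regularity is stable under arbitrary field extension and under localization, so $\widehat{B}^{IB} \otimes_{\widehat{A}^I} \kappa(\frq)$ is geometrically regular over $\kappa(\frq)$. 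By \autoref{cor:Grothendieck-localization-regular-nonlocal}, $\widehat{\varphi}^I$ is regular.

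The only genuinely delicate point is the identification $\widehat{B}^{IB} \cong B \otimes_A \widehat{A}^I$ and the compatibility of residue fields under completion; both are handled by the Noetherian hypotheses and the discussion already in the preliminaries, so I expect no real obstacle --- the argument is essentially a bookkeeping exercise in reducing to \autoref{thm:Grothendieck-localization-regular} via \autoref{cor:Grothendieck-localization-regular-nonlocal}. One should take a little care that the closed points of $\widehat{B}^{IB}$ really do all lie over $I$ (this uses that $\widehat{B}^{IB}$ is $IB$-adically complete, hence $IB$ is contained in its Jacobson radical), since the geometric-regularity hypothesis of $\varphi$ is only being used at primes containing $I$.
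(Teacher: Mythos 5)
Your overall strategy is the same as the paper's --- reduce to the closed fibers via \autoref{cor:Grothendieck-localization-regular-nonlocal}, observe that every maximal ideal of $\widehat{B}^{IB}$ contains $I\widehat{B}^{IB}$, and identify the fiber with a fiber of $\varphi$ over a prime of $A$ containing $I$. However, there is a genuine error at the center of your argument: the identification $\widehat{B}^{IB} \cong B \otimes_A \widehat{A}^I$ is false in general. Your justification (``the completion of the finitely generated module is the base change'') silently assumes $B$ is a finitely generated $A$-\emph{module}, which is not part of the hypotheses; tensor products do not commute with the inverse limit $\varprojlim B/I^nB$ otherwise. Concretely, for $A = k[x]$, $B = k[x,y]$, $I = (x)$ one has $B \otimes_A \widehat{A}^I = k\llbracket x\rrbracket[y]$, whereas $\widehat{B}^{IB}$ consists of all series $\sum_n a_n(y)x^n$ with $a_n \in k[y]$ of unbounded degree, e.g.\ $\sum_n y^n x^n$. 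Since you use this isomorphism both to deduce flatness of $\widehat{\varphi}^I$ (as a base change of $\varphi$) and to compute $\widehat{B}^{IB} \otimes_{\widehat{A}^I} \kappa(\frq)$, both steps are unjustified as written.

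Both steps are repairable, and the repairs are exactly what the paper does. Flatness of $\widehat{\varphi}^I$ holds for a flat map of Noetherian rings by \cite[\href{https://stacks.math.columbia.edu/tag/0AGW}{Tag 0AGW}]{stacks-project}, with no base-change interpretation needed. For the fiber, use that $\kappa(\frq)$ is an algebra over $\widehat{A}^I/I\widehat{A}^I \cong A/I$ and that $\widehat{B}^{IB}/I\widehat{B}^{IB} \cong B/IB$, so that
\[
\widehat{B}^{IB} \otimes_{\widehat{A}^I} \kappa(\frq) \cong \bigl(\widehat{B}^{IB}/I\widehat{B}^{IB}\bigr) \otimes_{A/I} \kappa(\frq) \cong (B/IB) \otimes_{A/I} \kappa(\frq) \cong B \otimes_A \kappa(\frq);
\]
your final answer is correct, only the route to it is not. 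One small point in your favor: you explicitly note that $\widehat{A}^I$ must be a $G$-ring in order to invoke \autoref{cor:Grothendieck-localization-regular-nonlocal}, a hypothesis the paper's proof uses tacitly; that fact is indeed standard for ideal-adic completions of $G$-rings and is worth recording.
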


\begin{proof}
Since $B$ is $A$-flat, $\widehat{B}^{IB}$ is $\widehat{A}^I$-flat by \cite[\href{https://stacks.math.columbia.edu/tag/0AGW}{Tag 0AGW}]{stacks-project}. By \autoref{cor:Grothendieck-localization-regular-nonlocal} it suffices to show that if $\fram$ is a maximal ideal of $\widehat{B}^{IB}$ and $\frp \coloneqq \fram \cap \widehat{A}^I$, then $\widehat{B}^{IB} \otimes_{\widehat{A}^I} \kappa(\frp)$ is a geometrically regular $\kappa(\frp)$-algebra. Note that $I\widehat{B}^{IB}$ is contained in the Jacobson radical of $\widehat{B}^{IB}$ \cite[\href{https://stacks.math.columbia.edu/tag/05GI}{Tag 05GI}]{stacks-project}. Thus, $\fram \supset I\widehat{B}^{IB}$, and hence, $\frp \supset I\widehat{A}^I$. Thus, $\frp$ must be expanded from a prime ideal $\frq$ of $A$ such that $\frq \supset I$ and $\kappa(\frp) \cong \kappa(\frq)$ (see discussion in the beginning of this section). Hence, $\widehat{B}^{IB} \otimes_{\widehat{A}^I} \kappa(\frp) \cong (\widehat{B}^{IB} \otimes_{\widehat{A}^I} \widehat{A}^I/I\widehat{A}^I) \otimes_{\widehat{A}^I/I\widehat{A}^I} \kappa(\frp) \cong B/IB \otimes_{A/I} \kappa(\frq) \cong B \otimes_{A} \kappa(\frq)$ is a geometrically regular $\kappa(\frp) = \kappa(\frq)$-algebra by the assumption that $A \to B$ is a regular map. 
\end{proof}

\begin{remark}
\label{rem:faithful-flatness-completion}
If $\varphi \colon A \to B$ is a faithfully flat map of Noetherian rings and $I$ is an ideal of $A$, then $\widehat{\varphi}^I \colon \widehat{A}^I \to \widehat{B}^{IB}$ is faithfully flat as well. Indeed, as above, flatness of $\widehat{\varphi}^I$ follows by \cite[\href{https://stacks.math.columbia.edu/tag/0AGW}{Tag 0AGW}]{stacks-project}. It suffices to show that if $\fram$ is a maximal ideal of $\widehat{A}^I$, then $\fram\widehat{B}^{IB} \neq \widehat{B}^{IB}$. But $I\widehat{A}^I$ is contained in the Jacobson radical of $\widehat{A}^I$, so $\fram = \fram'\widehat{A}^I$, where $\fram'$ is a maximal ideal of $A$ such that $\fram' \supset I$. Let $\frq \in \Spec(B)$ lie over $\fram'$. Then $\frq \supset IB$, and so, $\frq\widehat{B}^{IB}$ is a prime ideal of $\widehat{B}^{IB}$. By construction, $\fram\widehat{B}^{IB} = \fram'\widehat{B}^{IB} \subset \frq\widehat{B}^{IB}$, proving the desired assertion.
\end{remark}

We will need an analog of \cite[\href{https://stacks.math.columbia.edu/tag/0GIM}{Tag 0GIM}]{stacks-project} for filtered colimits of smooth ring maps.

\begin{lemma}
\label{lem:colimit-indsmooth-is-indsmooth}
Let $(I,\leq)$ be a directed poset. Let $I \ni i \mapsto (R_i \to A_i)$ be a system of arrows of rings over $I$. Let $R = \colim R_i$ and $A = \colim A_i$. If each $A_i$ is a filtered colimit of smooth $R_i$-algebras, then $A$ is a filtered colimit of smooth $R$-algebras.
\end{lemma}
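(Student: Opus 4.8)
The plan is to build $A$ directly as a filtered colimit of smooth $R$-algebras by merging the two layers of colimits---the outer system over $I$ and, for each $i$, an inner presentation of $A_i$---into a single filtered index category. First, for every $i \in I$ fix a presentation $A_i = \colim_{j \in J_i} B_{i,j}$ over a directed poset $(J_i,\leq)$, with each $B_{i,j}$ smooth over $R_i$ and the canonical maps $B_{i,j} \to A_i$ understood. Since a smooth ring map is finitely presented and base change of a smooth map is smooth, each $B_{i,j} \otimes_{R_i} R$ is a smooth $R$-algebra and, for $i \leq i'$, each $B_{i,j}\otimes_{R_i} R_{i'}$ is a finitely presented $R_{i'}$-algebra. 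Throughout I will use the standard facts that a homomorphism from a finitely presented algebra into a filtered colimit of algebras factors through one of the terms, and that two such factorizations agreeing in the colimit already agree after composition with a single transition map of the colimit.

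Next I would introduce the combined index category $\mathcal{K}$: its objects are pairs $(i,j)$ with $i \in I$, $j \in J_i$, and a morphism $(i,j) \to (i',j')$ consists of a relation $i \leq i'$ together with an $R_{i'}$-algebra homomorphism $\phi \colon B_{i,j}\otimes_{R_i} R_{i'} \to B_{i',j'}$ whose composite with $B_{i',j'} \to A_{i'}$ equals the canonical map $B_{i,j}\otimes_{R_i} R_{i'} \to A_{i'}$ coming from $B_{i,j} \to A_i \to A_{i'}$; composition of morphisms is defined using base change along the transition maps of the system $(R_i)$. One then checks that $\mathcal{K}$ is filtered: given two objects, pick $i$ above both first coordinates in $I$, factor the two canonical maps to $A_i$ through terms $B_{i,j_1'}$ and $B_{i,j_2'}$, and take $j \geq j_1', j_2'$ in $J_i$; given two parallel arrows $(i,j) \rightrightarrows (i',j')$, the two $\phi$'s become equal in $A_{i'} = \colim_{j''} B_{i',j''}$, hence equal after a transition map $B_{i',j'} \to B_{i',j''}$, which supplies the coequalizing arrow.

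Then set $F(i,j) = B_{i,j}\otimes_{R_i} R$, a smooth $R$-algebra, with $F$ acting on a morphism $(i \leq i',\phi)$ by $\phi \otimes_{R_{i'}} R$; this is a functor from $\mathcal{K}$ to the category of $R$-algebras. The base changes along $R_i \to R$ of the maps $B_{i,j} \to A_i \to A$ (followed by the multiplication map $A \otimes_{R_i} R \to A$) assemble into a cocone under $F$, hence an $R$-algebra map $\alpha \colon \colim_{\mathcal{K}} F \to A$. Surjectivity of $\alpha$ is clear because every element of $A = \colim_i A_i$ already lies in the image of some $B_{i,j}$. For injectivity, given $x \in F(i,j)$ with $\alpha([x]) = 0$, write $B_{i,j}\otimes_{R_i} R = \colim_{i'' \geq i}(B_{i,j}\otimes_{R_i} R_{i''})$ to descend $x$ to some $x_1 \in B_{i,j}\otimes_{R_i} R_{i_1}$, enlarge $i_1$ so that $x_1$ already maps to $0$ in $A_{i_1}$, factor $B_{i,j}\otimes_{R_i} R_{i_1} \to A_{i_1}$ through some $B_{i_1,j^*}$ (yielding a morphism $(i,j) \to (i_1,j^*)$ in $\mathcal{K}$), note that the image of $x_1$ in $B_{i_1,j^*}$ dies in $A_{i_1}$ hence in some $B_{i_1,j^{**}}$, and conclude that $x$ maps to $0$ in $F(i_1,j^{**})$ along the corresponding morphism of $\mathcal{K}$. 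Thus $\alpha$ is an isomorphism, and $A \cong \colim_{\mathcal{K}} F$ exhibits $A$ as a filtered colimit of smooth $R$-algebras (re-indexing over a directed poset if one insists on that form).

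The hard part will be purely organizational: choosing the morphisms of $\mathcal{K}$ so that it is simultaneously filtered \emph{and} has colimit equal to $A$ rather than a proper quotient of it---this is precisely the role of the compatibility condition with the maps to $A_{i'}$---and then running the injectivity chase through the two nested colimits in the correct order. Alternatively, once the combined system $(i,j) \mapsto (R_i \to B_{i,j})$ indexed by $\mathcal{K}$ has been assembled, one can bypass the last step by appealing to the analog for systems of smooth ring maps of \cite[Tag 0GIM]{stacks-project}.
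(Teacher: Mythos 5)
Your proposal is correct and follows essentially the same route as the paper: the paper's proof reduces to a constant base ring via the base change $R \otimes_{R_i} A_i$ (using that smoothness is stable under base change and that smooth maps are finitely presented) and then defers the merging of the two layers of colimits to an adaptation of \cite[\href{https://stacks.math.columbia.edu/tag/0BSJ}{Tag 0BSJ}]{stacks-project}. Your construction of the combined filtered index category $\mathcal{K}$ and the surjectivity/injectivity chase is precisely that adaptation carried out in full, so the two arguments differ only in the level of detail.
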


\begin{proof}
Since smoothness is preserved under arbitrary base change, for all $i$, $R \otimes_{R_i} A_i$ is a filtered colimit of smooth $R$-algebras. Now, 
\[A = \colim_i R \otimes_{R_i} A_i.\]
Hence we have reduced the lemma to showing that if $R$ is a ring and $A_i$ is a system of rings indexed by $I$ such that each $A_i$ is a filtered colimit of smooth $R$-algebras, then $\colim A_i$ is a filtered colimit of smooth algebras. Since smooth maps are of finite presentation one now adapts the proof of \cite[\href{https://stacks.math.columbia.edu/tag/0BSJ}{Tag 0BSJ}]{stacks-project} to the smooth setting.
\end{proof}

Using Popescu's desingularization result one then readily obtains:

\begin{corollary}
\label{cor:colimit-regular-is-regular}
Let $(I,\leq)$ be a directed poset. Let $I \ni i \mapsto (R_i \to A_i)$ be a system of homomorphisms of Noetherian rings over $I$. Let $R = \colim R_i$ and $A = \colim A_i$. Suppose $R$ and $A$ are Noetherian. If each $R_i \to A_i$ is regular then the induced map $R \to A$ is regular.
\end{corollary}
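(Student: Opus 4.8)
The plan is to combine Popescu's desingularization theorem (\autoref{thm:Popescu-desingularization}) with the colimit stability statement just proved in \autoref{lem:colimit-indsmooth-is-indsmooth}. First I would apply the ``only if'' direction of \autoref{thm:Popescu-desingularization} to each of the regular maps $R_i \to A_i$ of Noetherian rings: this expresses every $A_i$ as a filtered colimit of smooth $R_i$-algebras. This is precisely the hypothesis of \autoref{lem:colimit-indsmooth-is-indsmooth} applied to the given system $I \ni i \mapsto (R_i \to A_i)$, whose conclusion is that $A = \colim_i A_i$ is a filtered colimit of smooth $R$-algebras.

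It then remains to deduce regularity of $R \to A$ from this colimit description, and for this I would invoke the ``if'' direction of \autoref{thm:Popescu-desingularization}. Its hypothesis requires $R \to A$ to be flat, but this comes for free: smooth ring maps are flat, and a filtered colimit of flat modules is flat, so $A$ is flat over $R$. Since $R$ and $A$ are assumed Noetherian, \autoref{thm:Popescu-desingularization} now applies in the reverse direction and yields that $R \to A$ is regular.

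I do not anticipate a serious obstacle; the real work has been front-loaded into \autoref{lem:colimit-indsmooth-is-indsmooth}. The only points worth a sentence of care are that flatness of $R \to A$ need not be assumed (it follows from the ind-smooth description, so that \autoref{thm:Popescu-desingularization} is applicable), and that the Noetherian hypotheses on $R$ and $A$ — which are assumed rather than derived in this generality — are exactly what is needed to re-enter Popescu's theorem.
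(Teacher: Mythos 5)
Your proof is correct and is exactly the argument the paper intends: apply the forward direction of Popescu's theorem to each $R_i \to A_i$, feed the result into \autoref{lem:colimit-indsmooth-is-indsmooth}, and re-enter Popescu's theorem for $R \to A$. Your remark that the flatness hypothesis of \autoref{thm:Popescu-desingularization} is recovered from the ind-smooth description (smooth maps are flat, and filtered colimits of flat modules are flat) is a worthwhile point of care that the paper leaves implicit.
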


We record a descent result for the (quasi)-excellence property. A proof is included for the reader's convenience.

\begin{lemma}
\label{lem:descent-quasi-excellence}
Let $\varphi \colon A \to B$ be a faithfully flat regular map of rings. If $B$ is quasi-excellent, then $A$ is quasi-excellent. If $B$ is quasi-excellent and Cohen-Macaulay, then $A$ is excellent.
\end{lemma}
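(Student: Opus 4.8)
The plan is to verify, one at a time, the three defining properties of quasi-excellence for $A$ --- Noetherianity, the $G$-ring property, and openness of the regular locus on every finite type algebra --- by transporting each of them from $B$ across $\varphi$, and then, for the second assertion, to deduce universal catenarity from a faithfully flat descent of the Cohen-Macaulay property. That $A$ is Noetherian is immediate: an ascending chain $I_1 \subseteq I_2 \subseteq \cdots$ of ideals of $A$ expands to an ascending chain in the Noetherian ring $B$, which stabilizes, and since $I \mapsto IB$ is injective on ideals of $A$ (faithful flatness) the original chain stabilizes as well.

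To see that $A$ is a $G$-ring, fix $\frp \in \Spec A$; we must show $A_\frp \to \widehat{A_\frp}$ is regular. Choose $\frq \in \Spec B$ lying over $\frp$, which exists since $\varphi$ is faithfully flat. Then $A_\frp \to B_\frq$ is flat and local, hence faithfully flat, and it is regular because its fibres are localizations of the (geometrically regular, Noetherian) fibres of $\varphi$. As $B$ is quasi-excellent it is a $G$-ring, and a localization of a $G$-ring is a $G$-ring, so $B_\frq \to \widehat{B_\frq}$ is regular; by stability of regular homomorphisms under composition, $A_\frp \to \widehat{B_\frq}$ is then regular. Applying \cite[\href{https://stacks.math.columbia.edu/tag/0AGW}{Tag 0AGW}]{stacks-project} to the flat map $A_\frp \to B_\frq$ with $I = \frp A_\frp$ gives a flat map $\widehat{A_\frp} \to \widehat{B_\frq}^{\frp B_\frq}$; since $\widehat{B_\frq}^{\frp B_\frq}$ is a Noetherian local ring whose completion at its maximal ideal is again $\widehat{B_\frq}$, composing yields a flat --- hence faithfully flat --- local homomorphism $\widehat{A_\frp} \to \widehat{B_\frq}$. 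Now for each prime $\frp'$ of $A_\frp$, base change along $A_\frp \to \kappa(\frp')$ produces a faithfully flat map $\widehat{A_\frp} \otimes_{A_\frp} \kappa(\frp') \to \widehat{B_\frq} \otimes_{A_\frp} \kappa(\frp')$ whose target, being a fibre of the regular map $A_\frp \to \widehat{B_\frq}$, is geometrically regular over $\kappa(\frp')$; since geometric regularity descends along faithfully flat ring maps of $\kappa(\frp')$-algebras (reduce to ordinary regularity after a finite base extension, then invoke faithfully flat descent of regularity), the source $\widehat{A_\frp} \otimes_{A_\frp} \kappa(\frp')$ is geometrically regular over $\kappa(\frp')$. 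Hence $A_\frp \to \widehat{A_\frp}$ is regular, and since $\frp$ was arbitrary, $A$ is a $G$-ring.

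For openness of the regular locus, let $C$ be a finite type $A$-algebra and put $C' := C \otimes_A B$, a finite type algebra over the quasi-excellent ring $B$, so that $\Reg(C')$ is open in $\Spec C'$. The map $C \to C'$ is faithfully flat and regular (a base change of $\varphi$), so $\pi \colon \Spec C' \to \Spec C$ is surjective, flat, and quasi-compact, hence submersive; and because $C \to C'$ is regular, the flat-local criterion for regularity (a flat local extension with regular closed fibre is regular if and only if its base is) gives $\pi^{-1}(\Reg(C)) = \Reg(C')$. As $\Reg(C')$ is open and $\pi$ is submersive, $\Reg(C)$ is open; together with the previous paragraph this shows $A$ is quasi-excellent. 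If moreover $B$ is Cohen-Macaulay, then $A$ is Cohen-Macaulay: for $\frp \in \Spec A$ choose $\frq \in \Spec B$ over it, so that $A_\frp \to B_\frq$ is flat and local with Cohen-Macaulay target, which forces $A_\frp$ to be Cohen-Macaulay. Since Cohen-Macaulay rings are universally catenary \cite[\href{https://stacks.math.columbia.edu/tag/00NM}{Tag 00NM}]{stacks-project}, the quasi-excellent ring $A$ is then excellent (\cf \autoref{rem:CM-quasi-excellent}).

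I expect the main obstacle to be the bookkeeping in the $G$-ring step: reconciling the $\frp B_\frq$-adic and $\frq B_\frq$-adic completions of $B_\frq$ so as to obtain a genuinely faithfully flat comparison map $\widehat{A_\frp} \to \widehat{B_\frq}$, and pinning down clean references for transitivity of regular homomorphisms and for faithfully flat descent of geometric regularity. The remaining steps should be a routine assembly of standard stability properties of flat, faithfully flat, regular, and Cohen-Macaulay ring maps.
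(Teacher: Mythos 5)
Your proposal is correct and follows essentially the same route as the paper: Noetherianity and the Cohen--Macaulay property descend by faithful flatness, the $G$-ring property descends by composing $A_\frp \to B_\frq \to \widehat{B_\frq}$ and then descending regularity along the faithfully flat map $\widehat{A_\frp} \to \widehat{B_\frq}$, and openness of the regular locus is transferred through the fppf base change $C \to C\otimes_A B$. The only (harmless) deviations are that you inline a proof of the descent step that the paper cites directly as \cite[\href{https://stacks.math.columbia.edu/tag/07NT}{Tag 07NT}]{stacks-project}, and that you identify $\Reg(C)$ via the preimage $\pi^{-1}(\Reg(C))=\Reg(C')$ and submersiveness of $\pi$, whereas the paper identifies it as the image of the open set $\Reg(C')$ under the open map $\pi$.
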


\begin{proof}
The property of being Noetherian satisfies faithfully flat descent. Thus, quasi-excellence of $B$ implies $A$ is Noetherian. We first show that the property of being a $G$-ring satisfies faithfully flat descent. Let $\frp \in \Spec(A)$ and $\frq \in \Spec(B)$ such that $\frq \cap A = \frp$. The induced map $A_\frp \to B_\frq$ is regular by \cite[\href{https://stacks.math.columbia.edu/tag/07C0}{Tag 07C0}]{stacks-project}. Then consider the commutative diagram
\[\begin{tikzcd}[cramped]
	{A_\frp} & {B_\frq} \\
	{\widehat{A_\frp}} & {\widehat{B_\frq}}
	\arrow[from=1-1, to=1-2]
	\arrow[from=1-1, to=2-1]
	\arrow[from=1-2, to=2-2]
	\arrow[from=2-1, to=2-2],
\end{tikzcd}\]
where the bottom horizontal arrow is the induced map on completions. If $B$ is a $G$-ring then $A_\frp \to \widehat{B_\frq}$ is a regular ring map by \cite[\href{https://stacks.math.columbia.edu/tag/07QI}{Tag 07QI}]{stacks-project}. Since $\widehat{A_\frp} \to \widehat{B_\frq}$ is faithfully flat, $A \to \widehat{A_\frp}$ must be regular by \cite[\href{https://stacks.math.columbia.edu/tag/07NT}{Tag 07NT}]{stacks-project}. Thus, $A$ is a $G$-ring.

Next note that if $A \to B$ is a faithfully flat regular map of Noetherian rings, then the image of $\Reg(B)$ under the induced map on $\Spec$ is $\Reg(A)$. Indeed, since the property of being regular satisfies faithfully flat descent, it is clear that $\Spec(\varphi)(\Reg(B)) \subset \Reg(A)$. Conversely, let $\frp \in \Reg(A)$. Let $\frq \in \Spec(B)$ such that $\varphi^{-1}(\frq) = \frp$. Then as above, $A_\frp \to B_\frq$ is a regular map and $A_\frp$ is regular, so $B_\frq$ is also regular by \cite[\href{https://stacks.math.columbia.edu/tag/0H7S}{Tag 07HS}]{stacks-project}. Thus, $\Reg(A) \subset \Spec(\varphi)(\Reg(B))$.

Let $R$ be a finite type $A$-algebra. We wish to show the regular locus of $R$ is open. By finite type base change, the induced map $R \to R \otimes_A B$ is also regular \cite[\href{https://stacks.math.columbia.edu/tag/07C1}{Tag 07C1}]{stacks-project}. Now $R \to R \otimes_A B$ is a faithfully flat and finitely presented ring map. Thus, the induced map on $\Spec$ is open \cite[\href{https://stacks.math.columbia.edu/tag/00I1}{Tag 00I1}]{stacks-project}. Since $B$ is quasi-excellent and $R \otimes_A B$ is a finite type $B$-algebra, $\Reg(R \otimes_A B)$ is open. Thus, $\Reg(R)$ is open by the previous paragraph since it is the image of the open set $\Reg(R \otimes_A B)$ under the induced map on $\Spec$.

Finally, if $B$ is quasi-excellent and Cohen-Macaulay, then $A$ is quasi-excellent by the above and Cohen-Macaulay by \cite[\href{https://stacks.math.columbia.edu/tag/0352}{Tag 0352}]{stacks-project}. Thus, $A$ is excellent by \autoref{rem:CM-quasi-excellent}.
\end{proof}

\begin{remark}
\label{rem:universally-caternary-does-not-descend}
The property of being universally catenary does not descend along faithfully flat maps in general. In fact, this property does not descent even along an {\'e}tale ring map that is surjective on $\Spec$ \cite[\href{https://stacks.math.columbia.edu/tag/0355}{Tag 0355}]{stacks-project}.
\end{remark}

\subsection{Tate algebras and generalizations}
Recall that a \emph{non-Archimedean norm} on a field $k$ is a function
$
|\cdot| \colon k \to \mathbb{R}_{\geq 0}    
$
that satisfies the following properties:
\begin{enumerate}
    \item[$\bullet$] $|x| = 0$ if and only if $x = 0$,
    \item[$\bullet$] $|xy| = |x||y|$, and
    \item[$\bullet$] $|x + y| \leq \max\{|x|,|y|\}$.
\end{enumerate}
In other words, $|\cdot|$ is a non-Archimedean multiplicative valuation of $k$ of rank $1$.
A field $k$ equipped with a non-Archimedean norm $|\cdot|$ is called a \emph{real-valued field} and is denoted $(k,|\cdot|)$.

The valuation ring of $k$ is the subring $k^\circ \coloneqq \{x \in k \colon |x| \leq 1\}$. This is a local ring with maximal ideal $k^{\circ \circ} \coloneqq \{x \in k \colon |x| < 1\}$. Note that $k^\circ$ has Krull dimension $1$.

\begin{definition}
    \label{def:NA-field}
    A \emph{non-Archimedean} (abbrev. NA) field is a real-valued field $(k,|\cdot|)$ such that $k$ is complete with respect to the metric $|x - y|$ that is induced by $|\cdot|$ and such that $|k^\times| \neq {1}$, that is, $k$ is non-trivially valued.
\end{definition}

\begin{definition}
    \label{def:Tate-algebra}
    Let $(k,|\cdot|)$ be a NA field. For every positive integer $n$, the \emph{Tate algebra} in $n$ indeterminates over $k$, denoted, $T_n(k)$, is the $k$-subalgebra of the formal power series ring $k\llbracket X_1,\dots,X_n\rrbracket $ consisting of those power series
    $
    \sum_{\nu \in \mathbb{Z}_{\geq 0}^n} a_\nu X^{\nu}
    $
    (written in multi-index notation) such that $|a_\nu| \to 0$ as $\nu_1 + \dots + \nu_n \to \infty$. An element of $T_n(k)$ is called a \emph{restricted power series}.  A homomorphic image of $T_n(k)$ is called a \emph{strictly affinoid $k$-algebra} or \emph{strictly $k$-affinoid}.
\end{definition}

\begin{remark}
What we call ``strictly affinoid $k$-algebras'' are often simply called \emph{affinoid $k$-algebras}. The term ``strictly'' tends to be used in the literature on Berkovich spaces \cite{BerkovichSpectralTheory,BerkovichEtale}. On the other hand, when dealing with other theories of $p$-adic analytic geometry, like Huber's adic spaces,  it is more common to use the terminology "affinoid $k$-algebra". Since affinoid $k$-algebras in the sense of Berkovich will play a prominent role in this paper, we will follow Berkovich's practice of calling homomorphic images of $T_n(k)$ strictly $k$-affinoid.
\end{remark}

The Tate algebra is a $k$-Banach algebra (i.e. a complete normed $k$-algebra) with respect to the \emph{Gauss norm}, which is defined as follows: for all $\sum_{\nu \in \mathbb{Z}_{\geq 0}^n} a_\nu X^{\nu} \in T_n(k)$, 
\[
\bigg|\bigg|\sum_{\nu \in \mathbb{Z}_{\geq 0}^n} a_\nu X^{\nu}\bigg|\bigg|   \coloneqq \max\{|a_\nu| \colon \nu \in \mathbb{Z}_{\geq 0}^n\}.
\]
The Gauss norm on $T_n(k)$ is multiplicative \cite[pp. 13-14]{Boschrigid} (see also \cite[Corollary~2.24]{garzella-2024-perfectoid-tate-uncountable} for a generalization). We will always consider $T_n(k)$ as a $k$-Banach algebra with respect to the Gauss norm.

Remarkably, $T_n(k)$ shares many of the properties of the polynomial ring $k[X_1,\dots,X_n]$. We summarize some of these properties below for the reader's convenience.

\begin{theorem}
    \label{thm:Tate-algebras-properties}
    Let $(k,|\cdot|)$ be a NA field, and let $n$ be a positive integer. Then the Tate algebra $T_n(k)$ satisfies the following properties:
    \begin{enumerate}
        \item \label{thm:Tate-Noetherian} $T_n(k)$ is Noetherian.
        \item \label{thm:Tate-UFD} $T_n(k)$ is a unique factorization domain.
        \item \label{thm:Tate-Jacobson} $T_n(k)$ is Jacobson, that is, every radical ideal is the intersection of the maximal ideals containing it.
        \item \label{thm:Tate-maximal-ideal-expanded} Let $k[X_1,\dots,X_n] \hookrightarrow T_n(k)$ be the inclusion map. If $\fram$ is a maximal ideal of $T_n(k)$, then $\fram' \coloneqq \fram \cap k[X_1,\dots,X_n]$ is a maximal ideal of $k[X_1,\dots,X_n]$. Furthermore, $\fram'T_n(k) = \fram$ and the induced map on residue fields $k[X_1,\dots,X_n]/\fram' \hookrightarrow T_n(k)/\fram$ is an isomorphism.
        \item \label{thm:Tate-dimension} If $\fram$ is a maximal ideal of $T_n(k)$, then $\fram$ is generated by $n$ elements and $T_n(k)/\fram$ is a finite extension of $k$. Moreover, $\fram$ has height $n$ and hence $T_n(k)$ has Krull dimension $n$.
        \item \label{thm:Tate-regular} $T_n(k)$ is regular.
        \item \label{thm:Tate-excellent} $T_n(k)$ is excellent, and hence, all strictly affinoid $k$-algebras are excellent.
        \item \label{thm:Tate-ideals-closed} Every ideal of $T_n(k)$ is closed in the topology on $T_n(k)$ that is induced by the Gauss norm.
        \item \label{thm:Tate-finite-extension} If $T_n(k) \to A$ is a finite ring map, then $A$ is a strictly affinoid $k$-algebra.
    \end{enumerate}
\end{theorem}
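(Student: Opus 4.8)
The plan is to reduce every assertion to the analytic backbone of the subject --- the \emph{Weierstrass division} and \emph{Weierstrass preparation} theorems, together with the ensuing \emph{Noether normalization} (every strictly $k$-affinoid algebra is module-finite over some $T_d(k)$) --- and to import these from the standard reference \cite{Boschrigid}. Concretely, after a suitable ``Weierstrass'' $k$-automorphism of $T_n(k)$ a given nonzero $f$ becomes $X_n$-distinguished of some degree $s$, and then every $g \in T_n(k)$ is congruent modulo $(f)$ to a unique polynomial in $X_n$ of degree $< s$ with coefficients in $T_{n-1}(k)$, with control of the Gauss norm. I would not reprove these; the purpose of the theorem is to collect the standard consequences for later use.

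Granting that input, the elementary items follow by now-classical arguments, mostly by induction on $n$ with base case $T_0(k) = k$. \textbf{(1):} an ideal $\fra$ containing a distinguished $f$ of degree $s$ has $\fra/(f)$ sitting inside the free --- hence Noetherian, by induction --- $T_{n-1}(k)$-module $\bigoplus_{i<s}T_{n-1}(k)X_n^i$, so $\fra$ is finitely generated. \textbf{(2):} Weierstrass preparation writes a distinguished $f$ as a unit times a monic $P\in T_{n-1}(k)[X_n]$, and one compares irreducible factorizations with unique factorization in the polynomial ring $T_{n-1}(k)[X_n]$ (a UFD by induction). \textbf{(4), (5):} iterated Weierstrass division shows $T_n(k)/\fram$ is a finite $k$-algebra, necessarily a field since $\fram$ is maximal, hence a finite field extension of $k$; then $k[X_1,\dots,X_n]\to T_n(k)/\fram$ is surjective, so $\fram' := \fram\cap k[X_1,\dots,X_n]$ is maximal with $k[X_1,\dots,X_n]/\fram'\cong T_n(k)/\fram$ and $\fram' T_n(k)=\fram$, while the generation and dimension statements follow from the chain of primes $(X_1)\subsetneq(X_1,X_2)\subsetneq\dots\subsetneq(X_1,\dots,X_n)$ (the successive quotients being the domains $T_{n-i}(k)$) together with the fact that, after a Weierstrass change of variables, a maximal ideal is $n$-generated and hence has height $\leq n$ by Krull's height theorem (in fact exactly $n$, by equidimensionality). \textbf{(6):} then immediate, since a Noetherian ring is regular as soon as each $T_n(k)_\fram$ is, and $T_n(k)_\fram$ is local of dimension $n$ with $n$-generated maximal ideal. \textbf{(3):} Noether normalization together with the Nullstellensatz for $T_d(k)$ (maximal ideals of a module-finite extension contract to maximal ideals, and maximal ideals of $T_d(k)$ separate points, by reduction to $d = 0$ via distinguished polynomials). \textbf{(8):} the open mapping theorem endows every finitely generated $T_n(k)$-module with a canonical $k$-Banach topology with respect to which all module maps are continuous, and one deduces that finitely generated submodules --- in particular ideals --- are closed. \textbf{(9):} for a finite map $T_n(k)\to A$, rescaling module generators $a_1,\dots,a_r$ of $A$ by a suitable element of $k$ makes all multiplication structure constants have norm $\leq 1$, hence the $a_j$ power-bounded, so $Y_j\mapsto a_j$ extends to a continuous surjection $T_{n+r}(k) = T_n(k)\langle Y_1,\dots,Y_r\rangle \twoheadrightarrow A$, exhibiting $A$ as strictly $k$-affinoid.

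The genuinely deep item --- and the step I expect to be the main obstacle --- is \textbf{(7)}. The substantive content is that $T_n(k)$ is a $G$-ring (its formal fibres are geometrically regular) and that finite type $T_n(k)$-algebras have open regular locus; both are due to Kiehl \cite{KiehlTate}, with the counterpart for the Berkovich variants recorded in \cite{DucrosBerkovichExcellent}, and I would import rather than reprove them. Universal catenarity is then free, since $T_n(k)$ is regular and hence Cohen--Macaulay, so by \autoref{rem:CM-quasi-excellent} quasi-excellence already yields excellence. Finally, the passage to arbitrary strictly affinoid $k$-algebras is automatic: quasi-excellence passes to finite type algebras and hence to quotients, and such an algebra is a quotient of the Cohen--Macaulay ring $T_n(k)$, so \autoref{rem:CM-quasi-excellent} again upgrades quasi-excellence to excellence there as well.
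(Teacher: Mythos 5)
Your proposal is correct and follows essentially the same route as the paper: the paper's own ``proof'' simply cites \cite{Boschrigid}, \cite{BGRNAanalysis} and \cite{KiehlTate} for these standard facts, and the Weierstrass-division/preparation and Noether-normalization arguments you sketch are precisely the proofs given in those references, with the one genuinely deep input (7) imported from Kiehl in both cases. The only soft spots are your appeal to ``equidimensionality'' in (5) --- which is really the content of \cite[2.2/17]{Boschrigid}, proved by induction using the finite extension $T_{n-1}(k) \hookrightarrow T_n(k)/(f)$ for a distinguished $f \in \fram$ rather than assumed --- and the very compressed sketch of (3), but both are standard and citable.
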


\begin{proof}[Indication of proof]
    \autoref{thm:Tate-Noetherian}--\autoref{thm:Tate-Jacobson} are proved in \cite[Propositions 2.2/14--16]{Boschrigid}, while \autoref{thm:Tate-maximal-ideal-expanded} follows from \cite[7.1/2]{BGRNAanalysis}. The first assertion of \autoref{thm:Tate-dimension} follows from \autoref{thm:Tate-maximal-ideal-expanded} and the corresponding fact for polynomial rings over a field, while the second assertion about the height of each maximal ideal follows by \cite[2.2/17]{Boschrigid}. \autoref{thm:Tate-regular} follows from \autoref{thm:Tate-dimension}. The most difficult property to prove is \autoref{thm:Tate-excellent}, which is shown in \cite[Theorem\ 3.3]{KiehlTate}. \autoref{thm:Tate-ideals-closed} follows by \cite[Corollary\ 2.3/8]{Boschrigid}. Finally, \autoref{thm:Tate-finite-extension} follows by \cite[6.1.1/5]{BGRNAanalysis} because every homomorphism of (strictly) affinoid algebras is continuous by \cite[6.1.3/1]{BGRNAanalysis}.
\end{proof}

\begin{corollary}
\label{cor:polynomial-to-Tate-regular}
Let $(k,|\cdot|)$ be a NA field. Then the inclusion $i \colon k[X_1,\dots,X_n] \hookrightarrow T_n(k)$ is a regular map. Furthermore, for all maximal ideals $\fram$ of $T_n(k)$, the map on completions induced by the local homomorphism $k[X_1,\dots,X_n]_{\fram\cap k[X_1,\dots,X_n]} \to T_n(k)_\fram$ is an isomorphism.
\end{corollary}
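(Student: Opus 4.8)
The plan is to reduce the whole corollary to one local statement: for each maximal ideal $\fram$ of $T_n(k)$, writing $\fram' := \fram \cap k[X_1,\dots,X_n]$, the induced local homomorphism $k[X_1,\dots,X_n]_{\fram'} \to T_n(k)_\fram$ becomes an isomorphism after completion. Flatness and regularity of $i$ will then come out formally.

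First I would prove this completion isomorphism, which is exactly the ``furthermore'' clause. By \autoref{thm:Tate-maximal-ideal-expanded}, $\fram'$ is maximal in $k[X_1,\dots,X_n]$, one has $\fram' T_n(k) = \fram$, and the residue fields coincide; write $\kappa$ for this common residue field. Thus $R := k[X_1,\dots,X_n]_{\fram'} \to S := T_n(k)_\fram$ is a local homomorphism of Noetherian local rings, each regular of dimension $n$ (classically for $R$; by \autoref{thm:Tate-dimension} and \autoref{thm:Tate-regular} for $S$), inducing an isomorphism on residue fields, and satisfying $\fram_R S = \fram_S$. Hence the $\fram_R$-adic and $\fram_S$-adic filtrations are compatible and we obtain a graded $\kappa$-algebra map $\gr_{\fram_R} R \to \gr_{\fram_S} S$; by regularity both sides are polynomial rings over $\kappa$ in $n$ variables, the map is an isomorphism in degree $1$ (a surjection, hence bijection, of $n$-dimensional $\kappa$-vector spaces because $\fram_R S = \fram_S$), and therefore — being the map of symmetric $\kappa$-algebras induced by that degree-$1$ isomorphism — an isomorphism of graded rings. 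Comparing the towers $R/\fram_R^m \to S/\fram_S^m$ by induction on $m$ (five lemma, base case $\kappa = \kappa$) and passing to inverse limits yields $\widehat{R} \xrightarrow{\sim} \widehat{S}$.

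Next I would deduce that $i$ is flat. Flatness is local on the target, so it suffices that each $R \to S$ as above is flat. Under the isomorphism $\widehat{S} \cong \widehat{R}$ just produced, the composite $R \to S \to \widehat{S}$ is identified with the completion map $R \to \widehat{R}$, which is faithfully flat; since $S \to \widehat{S}$ is also faithfully flat, the standard descent of flatness (tensor an exact sequence of $R$-modules up to $\widehat{S}$ over $S$ and use faithful flatness of $S \to \widehat{S}$) shows $R \to S$ is flat. With flatness in hand I would conclude via \autoref{cor:Grothendieck-localization-regular-nonlocal}: the ring $k[X_1,\dots,X_n]$ is a $G$-ring (it is excellent, being of finite type over a field), and for each maximal ideal $\fram$ of $T_n(k)$, with $\frp := i^{-1}(\fram) = \fram'$, the fiber is $T_n(k) \otimes_{k[X_1,\dots,X_n]} \kappa(\frp) = T_n(k)/\fram' T_n(k) = T_n(k)/\fram \cong \kappa(\frp)$, which is trivially a geometrically regular $\kappa(\frp)$-algebra. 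Hence $i$ is regular.

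The main obstacle is the completion isomorphism in the first step; once $\widehat{T_n(k)_\fram} \cong \widehat{k[X_1,\dots,X_n]_{\fram'}}$ is available, the rest is bookkeeping. The associated-graded comparison seems the cleanest route; alternatively one could prove flatness of $R \to S$ directly from the local criterion for flatness applied to a regular system of parameters of $R$ — using that $T_n(k)$ is a domain, so those elements remain a regular sequence in $S$ — but the completion statement would still have to be proved separately for the ``furthermore'' clause.
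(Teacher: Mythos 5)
Your proposal is correct and follows essentially the same route as the paper: reduce to the local maps $k[X_1,\dots,X_n]_{\fram'}\to T_n(k)_\fram$ at maximal ideals, show they become isomorphisms after completion using $\fram'T_n(k)=\fram$, the residue-field isomorphism, and the fact that both rings are regular of dimension $n$, deduce flatness from this, and then conclude regularity from \autoref{cor:Grothendieck-localization-regular-nonlocal} because the closed fibers are fields. The only (harmless) divergence is in two sub-lemmas: you establish the completion isomorphism by an associated-graded comparison where the paper instead notes that the completed map is surjective and is a map of regular local rings of equal dimension, and you re-derive by hand the descent of flatness along $S\to\widehat S$ that the paper simply cites.
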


\begin{proof}
If we can show that $i$ is flat, then its regularity will follow by \autoref{thm:Tate-algebras-properties}\autoref{thm:Tate-maximal-ideal-expanded} and \autoref{cor:Grothendieck-localization-regular-nonlocal}. But flatness is a local property that can be checked after localizing at the maximal ideals of the target \cite[\href{https://stacks.math.columbia.edu/tag/00HT}{Tag 00HT}]{stacks-project}, and for a local homomorphism of Noetherian local rings flatness can be checked after completion \cite[\href{https://stacks.math.columbia.edu/tag/0C4G}{Tag 0C4G}]{stacks-project}. If $\fram$ is a maximal ideal of $T_n(k)$, then the map on completions induced by  $k[X_1,\dots,X_n]_{\fram \cap X_1,\dots,X_n} \to T_n(k)_{\fram}$ is surjective by \cite[\href{https://stacks.math.columbia.edu/tag/0315}{Tag 0315, part (1)}]{stacks-project} and \autoref{thm:Tate-algebras-properties}\autoref{thm:Tate-maximal-ideal-expanded}. But the induced map on completions is a homomorphism of regular local rings of Krull dimension $n$ by \autoref{thm:Tate-algebras-properties}\autoref{thm:Tate-dimension}. Thus, it must be an isomorphism by dimension considerations and is hence flat.
\end{proof}

\begin{remark}
$k[X_1,\dots,X_n] \hookrightarrow T_n(k)$ is not faithfully flat. For example, if $k$ is algebraically closed, then the maximal spectrum of $k[X_1,\dots,X_n]$ is in bijection with $k^n$ whereas the maximal spectrum of $T_n(k)$ is in bijection with the unit polydisc $\mathbb{B}^n = \{(\alpha_1,\dots,\alpha_n) \in k^n \colon |\alpha_i| \leq 1\}$ \cite[7.1.1/1]{BGRNAanalysis}.
\end{remark}

Let $\overline{k}$ be an algebraic closure of a NA field $(k,|\cdot |)$. Then there is a unique extension of the norm on $k$ to $\overline{k}$ \cite[Appendix~A, Theorem~3]{Boschrigid}. If we denote this extension also by $|\cdot|$, then the starting point of rigid geometry is to interpret $T_n(k)$ as the ring of functions on the closed unit polydisc
\[\mathbb{B}^n(\overline{k}) \coloneqq \{(\alpha_1,\cdots,\alpha_n) \in \overline{k}^n \colon |\alpha_i| \leq 1 \text{ for all } i\}.\]
But say now that for $\rho \coloneqq (\rho_1,\dots,\rho_n) \in (\mathbb{R}_{> 0})^n$ one is interested in defining the ring of convergent power series on the general polydisc
\[\mathbb{B}^n_\rho(\overline{k}) \coloneqq \{(\alpha_1,\cdots,\alpha_n) \in \overline{k}^n \colon |\alpha_i| \leq \rho_i \text{ for all } i\}.\]
This leads to Berkovich's notion of affinoid algebras.

\begin{definition}
\label{def:convergent-power-series-general-polydisc}
For $\rho \coloneqq (\rho_1,\dots,\rho_n) \in (\mathbb{R}_{> 0})^n$, one defines the ring of \emph{convergent power series in the polydisc $\mathbb{B}^n_\rho(\overline{k})$}, denoted $T_{n,\rho}(k)$, to be the ring  consisting of those power series
    $
    \sum_{\nu \in \mathbb{Z}_{\geq 0}^n} a_\nu X^{\nu}
    $
    (written in multi-index notation) such that $|a_\nu|\rho^\nu \coloneqq |a_\nu|\rho_1^{\nu_1}\cdots\rho_n^{\nu_n} \to 0$ as $\nu_1 + \dots + \nu_n \to \infty$. A homomorphic image of $T_{n,\rho}(k)$ is called an \emph{affinoid $k$-algebra in the sense of Berkovich}, or a \emph{Berkovich affinoid $k$-algebra} for short.
\end{definition}

Thus, for $\rho = (1,\dots,1)$, $T_{n,\rho}(k) = T_n(k)$. Note that if all the coordinates of $\rho$ are $\geq 1$, then $T_{n,\rho}(k)$ is a $k[X_1,\dots,X_n]$-subalgebra of $T_n(k)$. Generalizing the Gauss norm on $T_n(k)$, we set the \emph{Gauss norm} on $T_{n,\rho}(k)$ to be the one where for $\sum_{\nu \in \mathbb{Z}_{> 0}^n} a_\nu X^{\nu} \in T_{n,\rho}(k)$, 
\[\left|\left|\sum_{\nu \in \mathbb{Z}_{\geq 0}^n} a_\nu X^{\nu}\right|\right| \coloneqq \max\{ |a_\nu|\rho^{\nu} \colon \nu \in \mathbb{Z}^n_{\geq 0}\}.\]
The Gauss norm on $T_{n,\rho}(k)$ is multiplicative \cite[6.1.5/2]{BGRNAanalysis} and makes $T_{n,\rho}(k)$ into a $k$-Banach algebra that contains $k[X_1, \dots, X_n]$ as a dense $k$-subalgebra \cite[6.1.5/1]{BGRNAanalysis}. 


We now summarize some of the main properties of $T_{n,\rho}(k)$ that we need.

\begin{proposition}
\label{prop:convergent-powerseries-general-polydisc}
Let $(k,|\cdot|)$ be a NA field. Fix an algebraic closure $\overline{k}$ of $k$ and let $|\cdot|$ also denote the unique norm on $\overline{k}$ extending the one on $k$. Let $\rho = (\rho_1,\dots,\rho_n) \in (\mathbb{R}_{> 0})^n$. We have the following:
\begin{enumerate}
    \item\label{prop:convergent-powerseries-general-polydisc.a} $T_{n,\rho}(k)$ is a strictly affinoid $k$-algebra if and only if $\rho \in \left|\overline{k}^\times\right|^n_{> 0}$. In this case all the maximal ideals of $T_{n,\rho}(k)$ have height $n$.
    \item\label{prop:convergent-powerseries-general-polydisc.b} If $(k,|\cdot|_k) \hookrightarrow (\ell,|\cdot|_\ell)$ is an extension of NA fields, then $T_{n,\rho}(k) \widehat{\otimes_k} \ell \cong T_{n,\rho}(\ell)$ and $T_{n,\rho}(k) \hookrightarrow T_{n,\rho}(\ell)$ is faithfully flat.
    \item\label{prop:convergent-powerseries-general-polydisc.c} There exists an extension of NA fields $(k,|\cdot|_k) \hookrightarrow (\ell,|\cdot|_\ell)$ such that $T_{n,\rho}(\ell)$ is strictly $\ell$-affinoid.
    \item \label{prop:convergent-powerseries-general-polydisc.d} $T_{n,\rho}(k)$ is excellent and regular.
    \item \label{prop:convergent-powerseries-general-polydisc.e} If all the $\rho_i \geq 1$, then the inclusion $T_{n,\rho}(k) \hookrightarrow T_n(k)$ is a regular map.
    \item \label{prop:convergent-powerseries-general-polydisc.f} Every ideal of $T_{n,\rho}(k)$ is closed in the topology induced by the Gauss norm.
\end{enumerate}
\end{proposition}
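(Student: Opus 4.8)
The plan is to deduce all six assertions from the strictly affinoid case recorded in \autoref{thm:Tate-algebras-properties}, by base changing $T_{n,\rho}$ to a non-Archimedean field over which it becomes strictly affinoid. Concretely, I would establish the parts in the order \ref{prop:convergent-powerseries-general-polydisc.a}, \ref{prop:convergent-powerseries-general-polydisc.c}, \ref{prop:convergent-powerseries-general-polydisc.b}, \ref{prop:convergent-powerseries-general-polydisc.d}, \ref{prop:convergent-powerseries-general-polydisc.f}, \ref{prop:convergent-powerseries-general-polydisc.e}, each feeding on the previous ones.

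For part \ref{prop:convergent-powerseries-general-polydisc.a}, the arithmetic input is the identity $|\overline{k}^\times| = \sqrt{|k^\times|}$ inside $\mathbb{R}_{>0}$, a consequence of the norm formula $|\alpha| = |\nm_{k(\alpha)/k}(\alpha)|^{1/[k(\alpha):k]}$ and the uniqueness of the extension of $|\cdot|$ to $\overline{k}$ \cite[Appendix~A]{Boschrigid}. If $\rho_i^{m_i} = |a_i|$ with $a_i \in k^\times$ and $m_i \geq 1$, I would set $W_i \coloneqq a_i^{-1}X_i^{m_i}$, check that the closed $k$-subalgebra $A_0 \subseteq T_{n,\rho}(k)$ topologically generated by $W_1,\dots,W_n$ is isometrically isomorphic to the Tate algebra $T_n(k)$ (by computing the Gauss norm of a polynomial in the $W_i$), and observe that splitting each exponent as $\nu_i = q_i m_i + r_i$ with $0 \leq r_i < m_i$ identifies $T_{n,\rho}(k)$ with $A_0[X_1,\dots,X_n]/(X_i^{m_i} - a_i W_i \colon 1 \leq i \leq n)$, a free $A_0$-module of rank $\prod_i m_i$. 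Hence $T_{n,\rho}(k)$ is module-finite over a Tate algebra and so strictly $k$-affinoid by \autoref{thm:Tate-algebras-properties}\autoref{thm:Tate-finite-extension}. Conversely, if $T_{n,\rho}(k)$ is strictly $k$-affinoid then its Gauss norm, being power-multiplicative, coincides with the supremum seminorm, whose values lie in $\sqrt{|k^\times|}$ by the maximum modulus principle for strictly affinoid algebras; applied to $X_i$ this gives $\rho_i = \|X_i\| \in \sqrt{|k^\times|} = |\overline{k}^\times|$. For the height assertion, $T_{n,\rho}(k)$ is a domain (a subring of $k\llbracket X_1,\dots,X_n\rrbracket$) that is module-finite over the normal domain $A_0 \cong T_n(k)$, all of whose maximal ideals have height $n$ by \autoref{thm:Tate-algebras-properties}\autoref{thm:Tate-dimension}; going up and going down then show every maximal ideal of $T_{n,\rho}(k)$ has height $n$.

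For part \ref{prop:convergent-powerseries-general-polydisc.c}, by the characterization just proved it suffices to enlarge $k$ so that each $\rho_i$ becomes the absolute value of a unit; I would take $\ell$ to be the completion of $k(t_1,\dots,t_n)$ with respect to the multiplicative $\rho$-Gauss norm $\| \sum_\nu c_\nu t^\nu \| \coloneqq \max_\nu |c_\nu|\rho^\nu$, a non-trivially valued non-Archimedean absolute value restricting to $|\cdot|$ on $k$ and satisfying $|t_i| = \rho_i$. For part \ref{prop:convergent-powerseries-general-polydisc.b}, the identification $T_{n,\rho}(k) \widehat{\otimes_k} \ell \cong T_{n,\rho}(\ell)$ is immediate from the definitions, since the monomials $\{X^\nu\}$ form an orthogonal Schauder basis of $T_{n,\rho}(k)$ over $k$ with $\|X^\nu\| = \rho^\nu$ and completed base change merely enlarges the coefficient field (\cf \cite[6.1.1]{BGRNAanalysis}); faithful flatness of $T_{n,\rho}(k) \hookrightarrow T_{n,\rho}(\ell)$ is the standard fact that a completed ground-field base change of a non-Archimedean Banach algebra possessing an orthonormal basis is faithfully flat (and, when $\rho \in |\overline{k}^\times|^n$, it also follows from part \ref{prop:convergent-powerseries-general-polydisc.a} and the Tate-algebra case). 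For part \ref{prop:convergent-powerseries-general-polydisc.d}, fixing $\ell$ as in part \ref{prop:convergent-powerseries-general-polydisc.c} makes $T_{n,\rho}(\ell)$ strictly $\ell$-affinoid, hence Noetherian, regular, and excellent by \autoref{thm:Tate-algebras-properties}; then $T_{n,\rho}(k)$ is Noetherian by faithfully flat descent, and regular because localizing the faithfully flat map $T_{n,\rho}(k) \to T_{n,\rho}(\ell)$ at a prime $\frp$ and at a prime of $T_{n,\rho}(\ell)$ over it gives a faithfully flat local homomorphism whose target has finite global dimension, forcing $T_{n,\rho}(k)_\frp$ to have finite global dimension and hence to be regular; excellence is \cite{DucrosBerkovichExcellent}, or else follows from \autoref{lem:descent-quasi-excellence} (once one knows that $T_{n,\rho}(k) \to T_{n,\rho}(\ell)$ is a regular map) together with the fact that regular rings are Cohen--Macaulay, hence universally catenary. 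For part \ref{prop:convergent-powerseries-general-polydisc.f}, again with $\ell$ as above, $IT_{n,\rho}(\ell)$ is closed in the strictly affinoid algebra $T_{n,\rho}(\ell)$ (by \autoref{thm:Tate-algebras-properties}\autoref{thm:Tate-ideals-closed} and passage to quotients of Tate algebras), faithful flatness gives $I = IT_{n,\rho}(\ell) \cap T_{n,\rho}(k)$, and since the inclusion is an isometry for the Gauss norms, $I$ is the preimage of a closed set under a continuous map and so is closed.

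Part \ref{prop:convergent-powerseries-general-polydisc.e} is where the real work lies, and I would argue along the lines of \autoref{cor:polynomial-to-Tate-regular}. Since $T_{n,\rho}(k)$ is a $G$-ring by part \ref{prop:convergent-powerseries-general-polydisc.d}, \autoref{cor:Grothendieck-localization-regular-nonlocal} reduces the claim to showing that $T_{n,\rho}(k) \hookrightarrow T_n(k)$ is flat with geometrically regular fibers over the contractions of the maximal ideals of $T_n(k)$. Flatness is local on the target and may be checked after completing the local rings involved; for a maximal ideal $\fram \subset T_n(k)$ with $\frp \coloneqq \fram \cap T_{n,\rho}(k)$, the equicharacteristic Cohen structure theorem yields $\widehat{T_n(k)_\fram} \cong \kappa(\fram)\llbracket u_1,\dots,u_n\rrbracket$ (with $\kappa(\fram)$ finite over $k$) and $\widehat{T_{n,\rho}(k)_\frp} \cong \kappa(\frp)\llbracket v_1,\dots,v_{\height\frp}\rrbracket$, both regular local by part \ref{prop:convergent-powerseries-general-polydisc.d}. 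By miracle flatness the induced local homomorphism is flat as soon as $\dim \widehat{T_n(k)_\fram} = \height\frp + \dim\big(\widehat{T_n(k)_\fram} / \frp\widehat{T_n(k)_\fram}\big)$, \ie as soon as a regular system of parameters of $T_{n,\rho}(k)_\frp$ maps to part of a regular system of parameters of $\widehat{T_n(k)_\fram}$; proving this cotangent/dimension estimate --- equivalently, understanding the contraction $\frp$ and the fiber $T_n(k) \otimes_{T_{n,\rho}(k)} \kappa(\frp)$ precisely enough --- is the main obstacle, and once it is in hand geometric regularity of the fibers (hence regularity of the map) follows automatically from \autoref{cor:Grothendieck-localization-regular-nonlocal}. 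As corroboration that this is the right picture, when $\rho \in |\overline{k}^\times|^n$ the presentation of part \ref{prop:convergent-powerseries-general-polydisc.a} realizes $k[X_1,\dots,X_n] \to T_{n,\rho}(k)$ as the finite-type base change of the regular map $k[W_1,\dots,W_n] \hookrightarrow A_0$ of \autoref{cor:polynomial-to-Tate-regular}, hence as regular; a secondary point requiring care is to pin down the faithful-flatness input of part \ref{prop:convergent-powerseries-general-polydisc.b} cleanly, but everything outside of part \ref{prop:convergent-powerseries-general-polydisc.e} amounts to a soft reduction to the Tate case.
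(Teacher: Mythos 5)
Parts \autoref{prop:convergent-powerseries-general-polydisc.a}--\autoref{prop:convergent-powerseries-general-polydisc.d} and \autoref{prop:convergent-powerseries-general-polydisc.f} of your proposal track the paper's argument closely (reduce to the strictly affinoid case via the finite map $T_n(k)\hookrightarrow T_{n,\rho}(k)$ when $\rho\in|\overline{k}^\times|^n$, construct $\ell$ as a completed $\rho$-Gauss extension for \autoref{prop:convergent-powerseries-general-polydisc.c}, and descend regularity and closedness of ideals along $T_{n,\rho}(k)\hookrightarrow T_{n,\rho}(\ell)$); these are fine, modulo the fact that you lean on standard references for the same facts the paper does.

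The problem is part \autoref{prop:convergent-powerseries-general-polydisc.e}, which is the one assertion the paper actually proves in detail and the one your proposal leaves open: you reduce to a flatness statement at maximal ideals via \autoref{cor:Grothendieck-localization-regular-nonlocal} and then explicitly declare the required dimension/fiber computation to be ``the main obstacle'' without resolving it. That is a genuine gap, and it is exactly where the content lies. The two missing ingredients are the following. First, for a maximal ideal $\fram$ of $T_n(k)$ with $\eta\coloneqq\fram\cap T_{n,\rho}(k)$ and $\fram'\coloneqq\fram\cap k[X_1,\dots,X_n]$, the chain $k[X_1,\dots,X_n]/\fram'\hookrightarrow T_{n,\rho}(k)/\eta\hookrightarrow T_n(k)/\fram$ composes to an isomorphism by \autoref{thm:Tate-algebras-properties}\autoref{thm:Tate-maximal-ideal-expanded}, so $\eta$ is maximal with $\kappa(\eta)\cong\kappa(\fram)$, and since $\fram'T_n(k)=\fram$ one gets $\eta T_n(k)=\fram$; this pins down both the contraction and the fiber (the fiber $T_n(k)\otimes_{T_{n,\rho}(k)}\kappa(\eta)\cong T_n(k)/\fram$ is a field, so geometric regularity of fibers is automatic). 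Second, the induced map $\widehat{T_{n,\rho}(k)_\eta}\to\widehat{T_n(k)_\fram}$ is then surjective, giving $\height(\eta)\geq n$, while going down along a faithfully flat extension $T_{n,\rho}(k)\hookrightarrow T_{n,\rho}(\ell)$ with $T_{n,\rho}(\ell)$ strictly $\ell$-affinoid (so all its maximal ideals have height $n$ by part \autoref{prop:convergent-powerseries-general-polydisc.a}) gives $\height(\eta)\leq n$; hence the completion map is a surjection of regular local rings of equal dimension $n$, so an isomorphism, so flat. Your miracle-flatness framing would also close once these two facts are in hand, but without them the proof of \autoref{prop:convergent-powerseries-general-polydisc.e} --- and hence of the statement --- is incomplete. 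A secondary caution: your proposed alternative derivation of excellence in \autoref{prop:convergent-powerseries-general-polydisc.d} via \autoref{lem:descent-quasi-excellence} presupposes that $T_{n,\rho}(k)\to T_{n,\rho}(\ell)$ is a regular map, which you have not established; the citation of \cite{DucrosBerkovichExcellent} is the route that actually works.
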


\begin{proof}[Indication of proof]
\autoref{prop:convergent-powerseries-general-polydisc.a} This is \cite[6.1.5/4]{BGRNAanalysis}. We summarize the argument for the backwards implication since we will use it in the later parts. If $\rho \in |k^\times|^n_{> 0}$, then choosing $c_i \in k$ such that $|c_i| = \rho_i$ one sees that the map 
\begin{align*}
T_n(k) &\longrightarrow T_{n,\rho}(k)\\
X_i &\longmapsto c^{-1}_iX_i
\end{align*}
is a norm-preserving (i.e. isometric) isomorphism. In the general case, one can define a finite injective ring map $T_n(k) \hookrightarrow T_{n,\rho}(k)$  and then $T_{n,\rho}(k)$ is strictly $k$-affinoid by \autoref{thm:Tate-algebras-properties}\autoref{thm:Tate-finite-extension}. Since $T_n(k)$ is a normal domain, the height of a maximal ideal of $T_{n,\rho}(k)$ must equal the height of the maximal ideal of $T_n(k)$ it lies over, and the latter height is $n$ by \autoref{thm:Tate-algebras-properties}\autoref{thm:Tate-dimension}.

\autoref{prop:convergent-powerseries-general-polydisc.b} The isomorphism $T_{n,\rho}(k) \widehat{\otimes_k} \ell \cong T_{n,\rho}(\ell)$ is clear and faithful flatness of $T_{n,\rho}(k) \hookrightarrow T_{n,\rho}(\ell)$ follows by this isomorphism and \cite[Lemma~2.1.2]{BerkovichEtale}.

\autoref{prop:convergent-powerseries-general-polydisc.c} follows by the discussion in \cite[Page 22]{BerkovichSpectralTheory}.

\autoref{prop:convergent-powerseries-general-polydisc.d} Excellence is again difficult to show and follows by \cite[Th\'eor\`eme~2.6]{DucrosBerkovichExcellent}. We include a proof of regularity for the reader's convenience. By faithfully flat descent of regularity \cite[\href{https://stacks.math.columbia.edu/tag/07NG}{Tag 07NG}]{stacks-project}, \autoref{prop:convergent-powerseries-general-polydisc.c} and \autoref{prop:convergent-powerseries-general-polydisc.b}, we may assume $T_{n,\rho}(k)$ is strictly affinoid. By \autoref{prop:convergent-powerseries-general-polydisc.a} we can choose a finite field extension $\ell$ of $k$ contained in $\overline{k}$ so that $\rho \in |\ell^\times|^n_{> 0}$. Again, replacing $k$ by $\ell$ we may assume $\rho \in |k^\times|^n_{>0}$. Then by the sketch in \autoref{prop:convergent-powerseries-general-polydisc.a} and \autoref{thm:Tate-algebras-properties}\autoref{thm:Tate-regular}, $T_{n,\rho}(k) \cong T_n(k)$ is regular.

\autoref{prop:convergent-powerseries-general-polydisc.e} Since this property will be crucial for us later, we provide a detailed proof. It is clear that if all the $\rho_i \geq 1$, then $T_{n,\rho}(k)$ is a $k[X_1,\dots,X_n]$ subalgebra of $T_n(k)$. We first show $T_{n,\rho}(k) \hookrightarrow T_n(k)$ is flat. For this, let $\fram$ be a maximal ideal of $T_n(k)$. Let $\eta \coloneqq T_{n,\rho}(k) \cap \fram$ and $\fram' \coloneqq k[X_1,\dots,X_n] \cap \fram$. We need $T_{n,\rho}(k)_\eta \to T_n(k)_\fram$ to be flat. Since the composition 
\[k[X_1,\dots,X_n]/\fram' \hookrightarrow T_{n,\rho}(k)/\eta \hookrightarrow T_n(k)/\fram\]
is an isomorphism (\autoref{thm:Tate-algebras-properties}\autoref{thm:Tate-maximal-ideal-expanded}), $\eta$ is maximal and $T_{n,\rho}(k)/\eta \cong T_n(k)/\fram$. Moreover, since $\fram'T_n(k) = \fram$, we have $\eta T_n(k) = \fram$ as well. Thus, the induced map on completions 
\[\widehat{T_{n,\rho}(k)_{\eta}} \to \widehat{T_n(k)_\fram}\]
is again surjective by \cite[\href{https://stacks.math.columbia.edu/tag/0315}{Tag 0315, part (1)}]{stacks-project}. Since $\widehat{T_{n,\rho}(k)_{\eta}}$ and $\widehat{T_n(k)_\fram}$ are regular local rings, this shows that $\text{height}(\eta) \geq \height(\fram) = n$, where the last equality follows by \autoref{thm:Tate-algebras-properties}\autoref{thm:Tate-dimension}. We claim that $\text{height}(\eta) = n$, whence $\widehat{T_{n,\rho}(k)_{\eta}} \to \widehat{T_n(k)_\fram}$ is an isomorphism (and hence flat) by a Krull dimension consideration.

To see our claim, if we choose a faithfully flat extension $T_{n,\rho}(k) \hookrightarrow T_{n,\rho}(\ell)$ so that $T_{n,\rho}(\ell)$ is strictly $\ell$-affinoid, then for any maximal ideal $\eta'$ of $T_{n,\rho}(\ell)$ that lies over $\eta$, we have $\text{height}(\eta) \leq \height(\eta') = n$ by going down for flat maps \cite[\href{https://stacks.math.columbia.edu/tag/00HS}{Tag 00HS}]{stacks-project} and \autoref{prop:convergent-powerseries-general-polydisc.a}.  

That $T_{n,\rho}(k) \hookrightarrow T_n(k)$ is regular now follows by \autoref{cor:Grothendieck-localization-regular-nonlocal} because the fiber ring $T_n(k) \otimes_{T_{n,\rho}(k)} \kappa(\eta) \cong T_n(k)/\fram$ is isomorphic to $\kappa(\eta) = T_{n,\rho}(k)/\eta$.

\autoref{prop:convergent-powerseries-general-polydisc.f} Using \autoref{prop:convergent-powerseries-general-polydisc.c}, the proof of this reduces to the case of Tate algebras; see \cite[Proposition~2.1.3]{BerkovichSpectralTheory}.
\end{proof}

\begin{remark}
    Unlike the case of strictly affinoid $k$-algebras, Noether normalization does not hold for Berkovich affinoid $k$-algebras on the nose. See \cite[Remark~ 3.1.2.4 (iii)]{TemkinBerkovich}.
\end{remark}

\subsection{The relative Frobenius} If $\varphi \colon R \to S$ is a homomorphism
of rings of prime characteristic $p > 0$, then for every integer $e \geq 0$, consider the co-Cartesian diagram
  \[
    \begin{tikzcd}[column sep=4em]
      R \rar{F_R^e}\dar[swap]{\varphi} & F^e_{R*}R
      \arrow[bend left=30]{ddr}{F^e_{R*}\varphi}
      \dar{\id_{F^e_{R*}R} \otimes_R \varphi }\\
      S \rar{F^e_R \otimes_R \id_S} \arrow[bend right=12,end
      anchor=west]{drr}[swap]{F^e_S} & F^e_{R*}R \otimes_R S
      \arrow[dashed]{dr}[description]{F^e_{S/R}}\\
      & & F^e_{S*}S,
    \end{tikzcd}
    \]
    where $F^e_R \colon R \to F^e_{R*}R$ (resp. $F^e_S \colon S \to F^e_{S*}S$)
    denotes the $e$-th iterate of the Frobenius endomorphism on $R$ (resp. on $S$). 
    The \emph{$e$-th relative Frobenius of $\varphi$} is the ring homomorphism
  \[
    \begin{tikzcd}[column sep=1.475em,row sep=0]
      \mathllap{F^e_{S/R}\colon} F^e_{R*}R \otimes_R S \rar & F^e_{S*}S\\
      F^e_{R*}r \otimes s \rar[mapsto] & F^e_{S*}\varphi(r)s^{p^e}
    \end{tikzcd}
  \]
Here if $r \in R$, we denote the corresponding element of $F^e_{R*}R$ by $F^e_{R*}r$. If $e = 1$, we denote $F^1_{S/R}$ by $F_{S/R}$.

\begin{notation}
From now on, for a ring $R$ of prime characteristic $p > 0$, by $F^e_*R$ we will always mean $F^e_{R*} R$ for ease of notation. That is, we will drop the $R$ in the subscript.
\end{notation}

The relative Frobenius map $F_{S/R}$ detects geometric properties of the fibers of
$\varphi \colon R \to S$ when $R$ and $S$ are Noetherian. This is summarized in the 
next result.

\begin{theorem}
\label{thm:Radu-Andre-Dumitrescu}
Let $\varphi \colon R \to S$ be a flat homomorphism of Noetherian rings of 
prime characteristic $p > 0$. Then we have the following:
\begin{enumerate}[label=\textnormal{(\arabic*)}]
    \item $\varphi$ is a regular map if and only if $F_{S/R}$ is flat.
    \label{thm:Radu-Andre-Dumitrescu.1}
    \item The fibers of $\varphi$ are geometrically reduced if and only if $F_{S/R}$ is pure as a map of $F_*R$-modules.
    \label{thm:Radu-Andre-Dumitrescu.2}
\end{enumerate}
\end{theorem}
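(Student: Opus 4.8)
The statement is the combined theorem of Radu--André (for the first assertion) and of Dumitrescu (for the second), so one could simply cite it; here is the proof I would give. For the implication ``$\varphi$ regular $\Rightarrow$ $F_{S/R}$ flat'', I would apply Popescu's theorem (\autoref{thm:Popescu-desingularization}) to write $S=\colim_i S_i$ as a filtered colimit of smooth $R$-algebras, and then proceed in two steps. First, for a smooth $R$-algebra $T$ the relative Frobenius $F_{T/R}$ is finite locally free: locally in the \'etale topology $T$ is \'etale over a polynomial ring $R[\underline{x}]$, for which $F_{R[\underline{x}]/R}$ is finite free with basis the monomials $\underline{x}^{\alpha}$, $0\le\alpha_j<p$ (a direct expansion), the relative Frobenius of an \'etale map is an isomorphism (an \'etale universal homeomorphism is an isomorphism), and the chain rule $F_{C/A}=F_{C/B}\circ(F_{B/A}\otimes_B\id_C)$ for $A\to B\to C$ combines these. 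Second, I would pass to the colimit: the chain rule applied to $R\to S_i\to S$ gives $F_{S/R}=F_{S/S_i}\circ(F_{S_i/R}\otimes_{S_i}\id_S)$ with $F_{S_i/R}\otimes_{S_i}\id_S$ a base change of a finite locally free map, and $F_*S\cong\colim_i(F_*S_i\otimes_{S_i}S)$ as $(F_*R\otimes_R S)$-modules, so $F_{S/R}$ realizes $F_*S$ as a filtered colimit of flat $(F_*R\otimes_R S)$-modules, hence flat.

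For the remaining implications I would reduce to the case where the base is a field, using that the relative Frobenius commutes with base change: for any $R\to R'$ one has $F_{S\otimes_R R'/R'}\cong F_{S/R}\otimes_{F_*R}F_*R'$, and taking $R'=\kappa(\frp)$ identifies the relative Frobenius of the fibre $S\otimes_R\kappa(\frp)$ over $\kappa(\frp)$ with the fibre of $F_{S/R}$ over the point $\frp$ of $\Spec(F_*R)$; hence flatness (resp.\ purity, i.e.\ universal injectivity) of $F_{S/R}$ descends to each such fibre, which settles ``$F_{S/R}$ flat $\Rightarrow\varphi$ regular'' and ``$F_{S/R}$ pure $\Rightarrow$ fibres geometrically reduced'' once the field case is known. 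Over a field $k$: the map $S\to F_*k\otimes_k S$ is a base change of the faithfully flat $k\to F_*k\cong k^{1/p}$, so from $F_S=F_{S/k}\circ(F_k\otimes_k\id_S)$, flatness of $F_{S/k}$ forces the absolute Frobenius $F_S$ to be flat, whence $S$ is regular by Kunz's theorem; running the same argument after each finite (purely inseparable) $k'/k$ — note $S\otimes_k k'$ stays Noetherian and $F_{S\otimes_k k'/k'}=F_{S/k}\otimes_{F_*k}F_*k'$ — gives geometric regularity, and the converse over a field is the easy direction of Kunz. For the second assertion over a field, $F_*k$ is a field, so ``pure'' just means ``injective''; since $S$ is geometrically reduced over $k$ iff $k^{1/p}\otimes_k S$ is reduced, it remains to check $F_{S/k}$ is injective iff $k^{1/p}\otimes_k S$ is reduced: if $F_{S/k}$ is injective then $k^{1/p}\otimes_k S$ embeds into $F_*S$, which is reduced because $F_S=F_{S/k}\circ(\text{faithfully flat})$ is then injective; conversely if $k^{1/p}\otimes_k S$ is reduced then for $z\in\ker(F_{S/k})$ one computes $z^p\in 1\otimes S$ with $F_{S/k}(z^p)=0$, so $z^p=0$ by injectivity of $F_S$, and thus $z=0$.

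It remains to treat ``fibres geometrically reduced $\Rightarrow F_{S/R}$ pure''. By the field case and the base-change identification above, $F_{S/R}$ is injective on every fibre of $\Spec(F_*R)$; moreover $F_*R\otimes_R S$ and $F_*S$ are both flat over $F_*R$, the second because through $\varphi$ it is just $S$ as an $R$-module. So it suffices to know that a homomorphism of flat $F_*R$-modules that is injective on every fibre is universally injective, which (via Lazard's theorem) reduces to maps of finite free modules, where fibrewise injectivity makes the cokernel finitely presented and flat, hence locally free and the map split. I expect the main obstacle to be the direction ``$F_{S/R}$ flat $\Rightarrow\varphi$ regular'': the reduction to the field case is routine given the base-change formula, but the field case rests on Kunz's characterization of regularity by flatness of the Frobenius, which is the genuinely deep ingredient; by comparison the colimit bookkeeping of the first paragraph and the purity globalization just above are formal.
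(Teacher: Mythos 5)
The paper offers no argument for this theorem at all---it simply cites Radu and Andr\'e for part (1) and Dumitrescu for part (2)---so any self-contained proof is by definition a different route. Most of what you write is a sound reconstruction of those arguments: the Popescu-plus-smooth-case proof of ``regular $\Rightarrow F_{S/R}$ flat'', the base-change identification $F_{(S\otimes_R\kappa(\frp))/\kappa(\frp)}\cong F_{S/R}\otimes_{F_*R}F_*\kappa(\frp)$ reducing the converse directions to fibres, and the field case via Kunz are all correct. Two small remarks: the ``converse over a field'' in (1) is not literally the easy direction of Kunz---flatness of $F_*S$ over $S$ does not formally give flatness over the larger ring $F_*k\otimes_kS$---but that direction is already subsumed by your first paragraph applied with $R=k$, so nothing is missing; and in the reduced field case the step $z^p=1\otimes s$, $s^p=0$, hence $s=0$ silently uses that $S$ itself is reduced, which you should note follows from $S\hookrightarrow k^{1/p}\otimes_kS$.

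The one genuine gap is the closing lemma: that a homomorphism $u\colon M\to N$ of flat $F_*R$-modules which is injective on every fibre $\kappa(\frp)$ is universally injective. Your justification ``via Lazard's theorem reduces to maps of finite free modules'' does not work as stated: writing $M=\colim M_i$ and $N=\colim N_j$ with finite free pieces, the induced maps $M_i\to N_{j(i)}$ have no reason to remain injective modulo $\frp$ (the structure maps $M_i\to M$ are not fibrewise injective in general), so the finite free case cannot be applied to them. The lemma is nevertheless true over the Noetherian ring $F_*R$, by a different argument: since $M$ and $N$ are flat, tensoring a prime filtration $0=Q_0\subset\cdots\subset Q_m=Q$ of a finitely generated module $Q$, with $Q_i/Q_{i-1}\cong A/\frp_i$, yields short exact rows, so by the four lemma and induction it suffices to prove each $u\otimes A/\frp$ injective; and $u\otimes A/\frp$ is a map of torsion-free $A/\frp$-modules whose kernel vanishes at the generic point of $A/\frp$ by the fibre hypothesis at $\frp$, hence is zero. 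Substituting this argument for the Lazard step completes the proof of ``geometrically reduced fibres $\Rightarrow F_{S/R}$ pure'' and hence of (2).
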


\begin{proof}[Indication of proof]
   \ref{thm:Radu-Andre-Dumitrescu.1} follows by \cite[Theorem\ 4]{RaduUneClasseDAnneaux} and
    \cite[Theorem\ 1]{AndreHomomorphismsRegulariers} while \ref{thm:Radu-Andre-Dumitrescu.2} follows by 
    \cite[Theorem\ 3]{DumitrescuReduceness}.
\end{proof}

\subsection{\emph{F}-intersection flatness and Ohm-Rush trace}

\begin{definition}
    \label{def:IF}
    For $R$-modules $L$ and $M$ and a submodule $U \subseteq L$, we denote by $UM$ the image of $U \otimes_R M$ in $L \otimes_R M$. 
    An $R$-module $M$ is called \emph{intersection flat} if for any finitely generated $R$-module $L$ and any collection of submodules $\{U_i\}_i$ of $L$, we have that 
    \[
        \Big( \bigcap_i U_i \Big) M = \bigcap_i \big( U_i M \big).
    \]
\end{definition}

Note that an intersection flat module is in particular flat by \cite[Proposition 5.5]{HochsterJeffriesintflatness}. $F$-intersection flatness is a special case of intersection flatness.

\begin{definition}
\label{def:FIF}
Let $R$ be a ring of prime characteristic $p > 0$. Then $R$ is \emph{$F$-intersection flat} if $F_*R$ is an intersection flat $R$-module.
\end{definition}

Thus, if $R$ is Noetherian and $F$-intersection flat, then $R$ is regular by \cite{KunzCharacterizationsOfRegularLocalRings}. 

We record a descent result for $F$-intersection flatness that relies on \autoref{thm:Radu-Andre-Dumitrescu}.

\begin{proposition}
\label{prop:descent-FIF}
Let $R \to S$ be a faithfully flat regular map of Noetherian rings of prime characteristic $p > 0$. If $S$ is $F$-intersection flat, then $R$ is $F$-intersection flat.
\end{proposition}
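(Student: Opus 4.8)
The plan is to transport the intersection flatness of $F_*S$ over $S$ back along the relative Frobenius of $\varphi$ and then descend it. Recall the co-Cartesian square defining the relative Frobenius $F_{S/R}\colon F_*R\otimes_R S\to F_*S$. Since $\varphi$ is a flat regular map of Noetherian rings of characteristic $p$, \autoref{thm:Radu-Andre-Dumitrescu}(1) gives that $F_{S/R}$ is flat; the first step is to upgrade this to the statement that $F_{S/R}$ is \emph{faithfully} flat. For this I would compare spectra: the structure map $F_R\otimes\id_S\colon S\to F_*R\otimes_R S$ is the base change of the absolute Frobenius $F_R\colon R\to F_*R$ along $\varphi$, and since the Frobenius is a universal homeomorphism and this property is stable under base change, $F_R\otimes\id_S$ is bijective on spectra; likewise the absolute Frobenius $F_S\colon S\to F_*S$ induces the identity on spectra. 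As $F_S=F_{S/R}\circ(F_R\otimes\id_S)$, it follows that $\Spec(F_{S/R})$ is a bijection, and a flat ring map surjective on spectra is faithfully flat.

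With this established, I would finish by descent. A faithfully flat ring map is pure, and purity survives restriction of scalars, so regarding $F_{S/R}$ as a homomorphism of $S$-modules (the $S$-structures being those induced by $F_R\otimes\id_S$ on the source and by $F_S$ on the target) exhibits $F_*R\otimes_R S$ as a pure $S$-submodule of $F_*S$. Since $F_*S$ is intersection flat over $S$ by hypothesis, and a pure submodule of an intersection flat module is again intersection flat (a short argument directly from the definition; cf.\ \cite{DET2023mittagintersectionflat}), the $S$-module $F_*R\otimes_R S$ is intersection flat. Finally, $F_*R\otimes_R S$ is the base change of the $R$-module $F_*R$ along the faithfully flat map $\varphi$, so by faithfully flat descent of intersection flatness (equivalently, faithfully flat descent of flatness and of the Mittag--Leffler property; see \cite{DET2023mittagintersectionflat}) we conclude that $F_*R$ is intersection flat over $R$, i.e., $R$ is $F$-intersection flat. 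In particular $R$ is regular by \cite{KunzCharacterizationsOfRegularLocalRings}.

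The step I expect to require the most care is the faithful flatness of $F_{S/R}$: plain flatness is provided by \autoref{thm:Radu-Andre-Dumitrescu}, but faithfulness does not follow formally from flatness of $\varphi$, and it is exactly the spectra comparison above — exploiting that Frobenius, hence also its base change $F_R\otimes\id_S$, is a universal homeomorphism — that makes the argument go through. Everything else is either unwinding definitions or appealing to the descent machinery of \cite{DET2023mittagintersectionflat}.
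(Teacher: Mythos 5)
Your argument is correct and follows essentially the same route as the paper: both use \autoref{thm:Radu-Andre-Dumitrescu} to get that the relative Frobenius $F_{S/R}$ is (faithfully) flat, hence pure as a map of $S$-modules, and then descend intersection flatness from $F_*S$ to $F_*R\otimes_R S$ to $F_*R$. The only difference is that the paper compresses the entire descent step into a citation of \cite[Theorem~3.4.1, part (1)]{DESTate}, whereas you inline its content (purity of $F_{S/R}$ over $S$, pure-submodule descent, and faithfully flat descent along $\varphi$) together with the universal-homeomorphism argument upgrading flatness of $F_{S/R}$ to faithful flatness; all of these details check out.
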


\begin{proof}
$F_{S/R}$ is a faithfully flat ring map by \autoref{thm:Radu-Andre-Dumitrescu}, and hence, it is pure as a map of $S$-modules. Then the result follows by \cite[Theorem~3.4.1, part (1)]{DESTate}.
\end{proof}

A less mysterious notion than intersection flatness is given by the so-called Ohm-Rush trace property. For an $R$-module $M$ and $x \in M$, by \emph{the trace of $x$}, denoted $\Tr_M(x)$ or $\Tr(x)$ (when $M$ is clear from context), we mean the ideal of $R$ given by the image of the evaluation at $c$ map:
\[\im\left(\Hom_R(M,R) \xrightarrow{\text{ev}\MVAt c} R\right).\]

\begin{definition}
    \label{def:Ohm-Rush-trace}
    An $R$-module $M$ is \emph{Ohm-Rush trace} if for all $x \in M$, $x \in \Tr_M(x)M$. A ring homomorphism $R \to S$ is \emph{Ohm-Rush trace} if $S$ is an Ohm-Rush trace $R$-module.
\end{definition}

Ohm-Rush trace modules appear to have been first systematically studied in  \cite{OhmRushcontent} where they were called \emph{trace modules}. However, since the term `trace module' can mean many different things in mathematics, we choose this slightly more verbose terminology following \cite{DET2023mittagintersectionflat}.

Ohm-Rush trace modules are intersection flat \cite[Proposition~4.3.8]{DET2023mittagintersectionflat}, although for modules over Noetherian complete local rings, the two notions coincide \cite[Theorem~4.3.12]{DET2023mittagintersectionflat}.

\begin{definition}
    \label{def:Frobenius-Ohm-Rush-trace}
    A ring $R$ of prime characteristic $p > 0$ is \emph{Frobenius Ohm-Rush trace}, abbreviated \emph{FORT}, if $F_*R$ is an Ohm-Rush trace $R$-module.
\end{definition}

\begin{example}
Let $R$ be a regular ring that is essentially of finite type over a local $G$-ring $(A,\fram)$ of prime characteristic $p > 0$. Then $R$ is $F$-intersection flat. Moreover, if $A$ is $\fram$-adically complete, then $R$ is Frobenius Ohm-Rush trace. For both results see the proof of \cite[Theorem~3.4.1 (3)]{DESTate} and the references therein.
\end{example}

For the sake of completeness, we include the definition of big test elements and the main result (for us) on existence of big test elements. However, we are omitting the definition of tight closure.

\begin{definition}
    \label{def:big-test-elements}
    For a Noetherian ring $R$ of prime characteristic $p > 0$, an element $c \in R$ not in any minimal prime is called a \emph{big test element of $R$} if for all $R$-modules $M$ and for all submodules $N$ of $M$, we have $c(N^*_M) \subset N$, where $N^*_M$ is the tight closure of $N$ in $M$. 
\end{definition}

\begin{theorem}\cite[Theorem~10.2]{SharpBigTestElements}
\label{thm:Sharp-big-test-elements}
Let $S$ be an excellent regular ring of prime characteristic $p > 0$ that is $F$-intersection flat. If $R$ is a homomorphic image of $S$ regular in codimension $0$, then for any $c \in R$ not contained in any minimal prime such that $R_c$ is regular, some power of $c$ is a big test element of $R$.
\end{theorem}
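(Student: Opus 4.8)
The plan is to follow the Hochster--Huneke strategy for constructing test elements (cf.\ \cite{HochsterHunekeTC1,HochsterHunekeFRegularityTestElementsBaseChange}), in the refinement due to Sharp, replacing the $F$-finiteness used in the classical argument by the $F$-intersection flatness of $S$. Write $R = S/I$. The goal is to exhibit a single integer $N$ such that $c^{N}$ multiplies the tight closure $N^{*}_{M}$ into $N$ for \emph{every} $R$-module $M$ and submodule $N \subseteq M$. First I would invoke the Hochster--Huneke reduction that it is enough to control finite-level Frobenius data: that $c^{N}$ kills the relevant obstruction for finitely generated $M$, and that the resulting multiplier is insensitive to enlarging $M$ to an arbitrary, possibly non-finitely-generated, module. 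Since $R_{c}$ is regular, every submodule of every $R_c$-module is tightly closed, so the whole problem is to \emph{spread this out} to a statement over $R$ with a multiplier that is a fixed power of $c$, independent of the Frobenius exponent $q = p^{e}$.

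To do this I would lift everything to $S$. A witnessing relation $d\, x^{q} \in N^{[q]}_{M}$ for $x \in N^{*}_{M}$ (with $d$ avoiding the minimal primes of $R$) lifts, after choosing lifts of $x$, $d$ and of $N$, to a relation over the regular ring $S$ modulo $I^{[q]}$. Over $S$ the Frobenius functor appearing in tight closure is exact, and because $S$ is $F$-intersection flat --- equivalently $F^{e}_{*}S$ is a flat \emph{and} Mittag--Leffler $S$-module --- the formation of colon ideals and, crucially, of \emph{arbitrary} intersections of ideals commutes with $q$-th Frobenius powers. It is this last point, and not merely flatness of $F^{e}_{*}S$, that the argument requires: when $M$ is not finitely generated, the submodules that arise are cut out by infinite families, and only the Mittag--Leffler condition guarantees an identity of the form $\bigl(\bigcap_{i}\mathfrak{A}_{i}\bigr)^{[q]} = \bigcap_{i}\mathfrak{A}_{i}^{[q]}$. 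In the $F$-finite case, or when $S$ is essentially of finite type over a complete local ring, $F^{e}_{*}S$ is automatically Mittag--Leffler, which is exactly why the classical constructions of test elements go through in those settings (compare the Example preceding the present statement).

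The crux --- the step I expect to be the main obstacle --- is to convert the triviality of tight closure over $R_{c}$ into a uniform multiplier over $R$. In the $F$-finite setting one splits the Frobenius $(R^{1/q})_{c} \to R_{c}$, clears denominators to get $\varphi \colon F^{e}_{*}R \to R$ with $\varphi(F^{e}_{*}1) = c^{N}$, and uses finiteness of $R^{1/q}$ over $R$ to make $N$ independent of $q$. Here $R^{1/q}$ need not be module-finite over $R$; indeed, as recalled in the introduction, $\Hom_{R}(F_{*}R,R)$ can be zero over affinoid rings, so there need be no splitting at all. The substitute is a finite-level argument: by Mittag--Leffler-ness each element of $F^{e}_{*}S$ lies in a finitely generated pure, hence projective, $S$-submodule, and the splitting-and-denominator-clearing is performed at that finite level, the exponent $N$ being extracted from finitely many generators of the lifted ideal together with $c$ and shown, by homogeneity of Frobenius powers, to be independent of $q$. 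The last thing to check is that the single exponent $N$ so obtained works simultaneously for all $q$ and all $M$, so that $c^{N} x \in N$ whenever $x \in N^{*}_{M}$; this simultaneous uniformity is precisely where intersection flatness --- rather than flatness alone --- is indispensable. For the details I would refer to Sharp \cite{SharpBigTestElements}.
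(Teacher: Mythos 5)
The paper gives no proof of this statement at all---it is imported verbatim from Sharp \cite{SharpBigTestElements} as Theorem~10.2 there---and your write-up likewise ends by deferring to Sharp for the actual details, so in substance the two treatments coincide: both treat the result as a black-box citation. Your intervening heuristic sketch (a Hochster--Huneke-style splitting argument adapted via the Mittag--Leffler property, with the minor caveat that elements of a flat Mittag--Leffler module lie in \emph{countably} generated pure projective submodules, not necessarily finitely generated ones) is not something the paper attempts or needs, so there is nothing further to compare beyond the citation.
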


\section{Excellence and \emph{F}-intersection flatness of dagger Tate algebras}

We fix a NA field $(k,|\cdot|)$ and an algebraic closure $\overline{k}$ of $k$. We also denote by $|\cdot|$ the unique NA norm on $\overline{k}$ that extends the norm on $k$. We let 
\[\Gamma \coloneqq |\overline{k}^\times|,\]
that is, $\Gamma$ is the value group of $\overline{k}$. Note that $\Gamma$ is always a dense subgroup of $\mathbb{R}_{> 0}$. For $r \in \mathbb{R}_{>0}$, we let $\rho_r \in \Gamma^n$ denote the $n$-tuple all of whose coordinates are $r$. Recall that $T_{n,\rho_r}(k)$ is an excellent regular $k$-algebra (\autoref{prop:convergent-powerseries-general-polydisc}\autoref{prop:convergent-powerseries-general-polydisc.d}), and if $r \in \Gamma$, then all the maximal ideals of $T_{n,\rho_r}(k)$ have height $n$ (\autoref{prop:convergent-powerseries-general-polydisc}\autoref{prop:convergent-powerseries-general-polydisc.a}). Moreover, if $r > 1$, then $T_{n,\rho_r}(k)$ is a $k[X_1,\dots,X_n]$-subalgebra of $T_n(k)$ such that $T_{n,\rho_r}(k) \hookrightarrow T_n(k)$ is a regular map (\autoref{prop:convergent-powerseries-general-polydisc}\autoref{prop:convergent-powerseries-general-polydisc.e}). It is clear that if $1 < r < s$, then $T_{n,\rho_s}(k) \subset T_{n,\rho_r}(k)$. Using the fact that $\Gamma$ is dense in $\mathbb{R}_{> 0}$ we see that the collection 
\[\{T_{n,\rho_r}(k) \colon r \in \Gamma_{> 1}\}\]
is filtered by inclusion.

\begin{definition}
\label{def:Dagger-Tate}
For every positive integer $n$, the \emph{dagger Tate algebra} or the \emph{overconvergent Tate algebra} in $n$ indeterminates over $k$, denoted, $T_n(k)^\dagger$, is the filtered union/colimit
\[T_n(k)^\dagger \coloneqq \bigcup_{r \in \Gamma_{> 1}} T_{n,\rho_r}(k).\]
A \emph{dagger affinoid $k$-algebra} is a homomorphic image of some  $T_n(k)^\dagger$.
\end{definition}

Clearly $T_n(k)^\dagger$ is a $k[X_1,\dots,X_n]$-sub algebra of $T_n(k)$. It is a normed $k$-algebra under the restriction of the Gauss norm on $T_n(k)$, but not a Banach algebra, since it is not complete.

\begin{remark}
\label{rem:notation-in-GK}
We will often refer to \cite{GKDaggeralgebras}, where some basic properties of dagger Tate algebras are discussed. However, this paper uses the notation $W_n$ instead of $T_n(k)^\dagger$.
\end{remark}

\begin{remark}
\label{rem:properties-dagger-Tate}
We now summarize some properties of $T_n(k)^\dagger$ with indications of proofs.
\begin{enumerate}[itemsep = 1mm]
    \item\label{properties-dagger-Tate.completion} The completion of $T_n(k)^\dagger$ is $T_n(k)$ with respect to the Gauss norm is \cite[Proposition 1.6]{GKDaggeralgebras}.
    \item\label{properties-dagger-Tate.JacobsonUFD} $T_n(k)^\dagger$ is a Jacobson UFD \cite[1.4]{GKDaggeralgebras}.
    \item\label{properties-dagger-Tate.NoetherNormalization} Every dagger affinoid $k$-algebra $A$ admits a finite injection $T_n(k)^\dagger \hookrightarrow A$ for some $n$ \cite[1.4]{GKDaggeralgebras}.
    \item\label{properties-dagger-Tate.faithfulflatness} $T_n(k)^{\dagger} \hookrightarrow T_n(k)$ is faithfully flat. For flatness one uses \cite[\href{https://stacks.math.columbia.edu/tag/05UU}{Tag 05UU, part (1)}]{stacks-project} and the fact that for all $r \in \Gamma_{> 1}$, $T_{n,\rho_r}(k) \hookrightarrow T_n(k)$ is flat by \autoref{prop:convergent-powerseries-general-polydisc}\autoref{prop:convergent-powerseries-general-polydisc.e}. To show faithful flatness it is enough to show $T_n(k)^{\dagger} \hookrightarrow T_n(k)$ is surjective on $\MSpec$, which follows by \cite[Proposition 1.5]{GKDaggeralgebras}.
    \item\label{properties-dagger-Tate.regular} $T_n(k)^\dagger$ is a Noetherian regular ring. Indeed, this follows by the regularity of $T_n(k)$ and faithfully flat descent \autoref{properties-dagger-Tate.faithfulflatness}.
    \item\label{properties-dagger-Tate.maximal.ideals} Let $\eta$ be a maximal ideal of $T_n(k)^\dagger$. Then $\eta T_n(k)$ is a maximal ideal of $T_n(k)$ and $\eta' \coloneqq \eta \cap k[X_1,\dots,X_n]$ is a maximal ideal of $k[X_1,\dots,X_n]$ such that $\eta' T_n(k)^\dagger = \eta$. Moreover, we have isomorphisms 
    \[k[X_1,\dots,X_n]/\eta' \xrightarrow{\cong} T_n(k)^\dagger/\eta \xrightarrow{\cong} T_n(k)/\eta T_n(k).\]
    To see this, choose a maximal ideal $\fram$ of $T_n(k)$ such that $\fram \cap T_n(k)^\dagger = \eta$. Then $\eta' = \fram \cap k[X_1,\dots,X_n]$ is maximal and $\eta'T_n(k) = \fram$ by \autoref{thm:Tate-algebras-properties}\autoref{thm:Tate-maximal-ideal-expanded}. Thus, $\eta T_n(k) = \fram$ as well, so $\eta T_n(k)$ is maximal. Moreover, by faithful flatness of $T_n(k)^\dagger \hookrightarrow T_n(k)$ one then has $\eta = \eta T_n(k) \cap T_n(k)^\dagger = \fram \cap T_n(k)^\dagger = (\eta' T_n(k)^\dagger)T_n(k) \cap T_n(k)^\dagger = \eta' T_n(k)^\dagger$. The assertion about isomorphism of residue fields now follows by the isomorphism $k[X_1,\dots,X_n]/\eta' \cong T_n(k)/\fram$ of \autoref{thm:Tate-algebras-properties}\autoref{thm:Tate-maximal-ideal-expanded}.
    \item\label{properties-dagger-Tate.maximal.dimension} All maximal ideals of $T_n(k)^\dagger$ are generated by $n$ elements and have height $n$. Indeed, the assertion about the number of generators follows by \autoref{properties-dagger-Tate.maximal.ideals} and the corresponding fact for number of generators of maximal ideals of polynomial rings. Let $\eta$ be a maximal ideal of $T_n(k)^\dagger$. Let $\fram = \eta T_n(k)$ be the unique maximal ideal of $T_n(k)$ lying over $\eta$ by \autoref{properties-dagger-Tate.maximal.ideals}. Then $T_n(k)^\dagger_\eta \hookrightarrow T_n(k)_\fram$ is a flat local homomorphism of Noetherian local rings whose closed fiber ring is a field. Then $\dim(T_n(k)^\dagger_\eta) = \dim(T_n(k)_\fram) = n$ by \cite[\href{https://stacks.math.columbia.edu/tag/00ON}{Tag00ON}]{stacks-project} and \autoref{thm:Tate-algebras-properties}\autoref{thm:Tate-dimension}.
\end{enumerate}
\end{remark}

We would next like to show that $T_n(k)^\dagger \hookrightarrow T_n(k)$ is regular and hence deduce that $T_n(k)^\dagger$ is excellent. We note that excellence of  dagger affinoid algebras is asserted in \cite[Theorem~1.7]{GKDaggeralgebras} without a detailed proof.

\begin{proposition}
\label{prop:Dagger-affinoid-regular-to-Tate-excellent}
Let $(k,|\cdot|)$ be a NA field. Then $T_n(k)^\dagger \hookrightarrow T_n(k)$ is faithfully flat and regular. Consequently, $T_n(k)^\dagger$, and hence any dagger affinoid $k$-algebra, is excellent.
\end{proposition}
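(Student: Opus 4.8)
The plan is to obtain regularity of $T_n(k)^\dagger \hookrightarrow T_n(k)$ by transporting it through the defining filtered colimit from the strictly affinoid pieces $T_{n,\rho_r}(k)$, and then to descend excellence from $T_n(k)$. Faithful flatness of $T_n(k)^\dagger \hookrightarrow T_n(k)$ and Noetherianity of $T_n(k)^\dagger$ have already been recorded in \autoref{rem:properties-dagger-Tate}\autoref{properties-dagger-Tate.faithfulflatness} and \autoref{rem:properties-dagger-Tate}\autoref{properties-dagger-Tate.regular}, so only regularity needs a new argument.

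For regularity, I would apply \autoref{cor:colimit-regular-is-regular} to the directed poset $\Gamma_{>1}$ (directed by inclusion, as observed before \autoref{def:Dagger-Tate}) together with the system of arrows $r \mapsto \bigl(T_{n,\rho_r}(k) \hookrightarrow T_n(k)\bigr)$, where the transition maps are the inclusions $T_{n,\rho_s}(k) \hookrightarrow T_{n,\rho_r}(k)$ for $r < s$ and the target forms the constant system $T_n(k)$; all the relevant squares commute since every ring in sight is a subring of $k\llbracket X_1,\dots,X_n\rrbracket$ and every map is the induced inclusion. Each $T_{n,\rho_r}(k)$ is Noetherian (it is excellent by \autoref{prop:convergent-powerseries-general-polydisc}\autoref{prop:convergent-powerseries-general-polydisc.d}), so is $T_n(k)$, and the colimits are $\colim_r T_{n,\rho_r}(k) = T_n(k)^\dagger$ (Noetherian, by \autoref{rem:properties-dagger-Tate}\autoref{properties-dagger-Tate.regular}) and the constant colimit $T_n(k)$. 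Since each $T_{n,\rho_r}(k) \hookrightarrow T_n(k)$ is regular by \autoref{prop:convergent-powerseries-general-polydisc}\autoref{prop:convergent-powerseries-general-polydisc.e} (here $r \in \Gamma_{>1}$, so $r > 1$), \autoref{cor:colimit-regular-is-regular} yields that $T_n(k)^\dagger \hookrightarrow T_n(k)$ is regular.

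Excellence of $T_n(k)^\dagger$ then follows from \autoref{lem:descent-quasi-excellence} applied to the faithfully flat regular map $T_n(k)^\dagger \hookrightarrow T_n(k)$: the target $T_n(k)$ is quasi-excellent (being excellent, \autoref{thm:Tate-algebras-properties}\autoref{thm:Tate-excellent}) and Cohen--Macaulay (being regular, \autoref{thm:Tate-algebras-properties}\autoref{thm:Tate-regular}), so $T_n(k)^\dagger$ is excellent. For the last assertion, a dagger affinoid $k$-algebra $A$ is a homomorphic image of some $T_m(k)^\dagger$, hence a finite-type algebra over the quasi-excellent ring $T_m(k)^\dagger$; quasi-excellence passes to finite-type algebras, and since $A$ is moreover a homomorphic image of $T_m(k)^\dagger$, which is regular and therefore Cohen--Macaulay, \autoref{rem:CM-quasi-excellent} shows $A$ is excellent. (Alternatively, one may realize $A$ as module-finite over some $T_m(k)^\dagger$ via \autoref{rem:properties-dagger-Tate}\autoref{properties-dagger-Tate.NoetherNormalization}.)

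The step I expect to be the real point is the organization of the regularity argument: one cannot simply feed $T_n(k)^\dagger \hookrightarrow T_n(k)$ directly into the Grothendieck localization criterion \autoref{cor:Grothendieck-localization-regular-nonlocal}, as that presupposes $T_n(k)^\dagger$ is a $G$-ring --- exactly (part of) what one is trying to conclude, making that route circular. Passing the regularity up from the $T_{n,\rho_r}(k)$, for which \autoref{prop:convergent-powerseries-general-polydisc}\autoref{prop:convergent-powerseries-general-polydisc.e} supplies it unconditionally, sidesteps this, and everything else is a routine application of the descent results already in place.
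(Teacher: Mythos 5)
Your proposal is correct and matches the paper's own argument essentially step for step: regularity of $T_n(k)^\dagger \hookrightarrow T_n(k)$ is obtained from \autoref{cor:colimit-regular-is-regular} applied to the filtered system $\{T_{n,\rho_r}(k)\}_{r \in \Gamma_{>1}}$ with each piece regular over $T_n(k)$ by \autoref{prop:convergent-powerseries-general-polydisc}\autoref{prop:convergent-powerseries-general-polydisc.e}, and excellence then descends via \autoref{lem:descent-quasi-excellence} using that $T_n(k)$ is quasi-excellent and Cohen--Macaulay. Your closing remark about why one cannot feed the map directly into \autoref{cor:Grothendieck-localization-regular-nonlocal} correctly identifies the reason the colimit detour is needed.
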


\begin{proof}
We have $T_n(k)^\dagger = \colim_{r \in \Gamma_{> 1}} T_{n,\rho_r}(k)$ is a Noetherian regular ring (see \autoref{rem:properties-dagger-Tate}\autoref{properties-dagger-Tate.regular}). Moreover, $T_{n,\rho_r} \to T_n(k)$ is regular for all $r \in \Gamma_{> 1}$ by \autoref{prop:convergent-powerseries-general-polydisc}\autoref{prop:convergent-powerseries-general-polydisc.e}. Thus, $T_n(k)^\dagger \hookrightarrow T_n(k)$ is regular by \autoref{cor:colimit-regular-is-regular}.

We have $T_n(k)^\dagger \hookrightarrow T_n(k)$ is faithfully flat by \autoref{rem:properties-dagger-Tate}\autoref{properties-dagger-Tate.faithfulflatness}. Since $T_n(k)$ is quasi-excellent (\autoref{thm:Tate-algebras-properties}\autoref{thm:Tate-excellent}) and Cohen-Macaulay, we get $T_n(k)^\dagger$ is excellent by \autoref{lem:descent-quasi-excellence}. The excellence of dagger affinoid $k$-algebras now follows because homomorphic images of excellent rings are excellent.
\end{proof}

We are now ready to prove the main result about dagger Tate algebras.

\begin{theorem}
\label{thm:Dagger-Tate-FIF}
Let $(k,|\cdot|)$ be a NA field of characteristic $p > 0$. For all positive integers $n$ and $i, \ell \in \mathbb{N}$, if $x_1,\dots,x_i, y_1,\dots,y_\ell$ are indeterminates, then $T_n(k)^\dagger[x_1,\dots,x_k]\llbracket y_1,\dots,y_\ell
\rrbracket$ is $F$-intersection flat.
\end{theorem}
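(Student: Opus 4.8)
\emph{Plan.} The idea is to bootstrap from the corresponding statement for the complete Tate algebra, which is already available, by descending along the faithfully flat regular inclusion $T_n(k)^\dagger \hookrightarrow T_n(k)$. Write $\underline{x} = x_1,\dots,x_i$ and $\underline{y} = y_1,\dots,y_\ell$, set $C \coloneqq T_n(k)^\dagger[\underline{x}][\underline{y}]$ and $D \coloneqq T_n(k)[\underline{x}][\underline{y}]$, and let $I \coloneqq (\underline{y})C$. Completing $I$-adically gives $\widehat{C}^I = T_n(k)^\dagger[\underline{x}]\llbracket\underline{y}\rrbracket$ and $\widehat{D}^{ID} = T_n(k)[\underline{x}]\llbracket\underline{y}\rrbracket$. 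The goal is to show that the natural map $\widehat{C}^I \to \widehat{D}^{ID}$ is a faithfully flat regular map of Noetherian rings of characteristic $p > 0$. Granting this, since $\widehat{D}^{ID} = T_n(k)[\underline{x}]\llbracket\underline{y}\rrbracket$ is $F$-intersection flat (established in \cite{DESTate} as part of the analysis of $T_n(k)[\underline{x}]\llbracket\underline{y}\rrbracket^{1/p}$), \autoref{prop:descent-FIF} immediately yields that $\widehat{C}^I = T_n(k)^\dagger[\underline{x}]\llbracket\underline{y}\rrbracket$ is $F$-intersection flat, which is the assertion.

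To produce the map, the starting point is \autoref{prop:Dagger-affinoid-regular-to-Tate-excellent}, which gives that $T_n(k)^\dagger \hookrightarrow T_n(k)$ is faithfully flat and regular and that both rings are excellent, hence Noetherian $G$-rings, of characteristic $p$. Since $D \cong C \otimes_{T_n(k)^\dagger} T_n(k)$ and both $C$ and $D$ are Noetherian by the Hilbert basis theorem, the base-changed map $C \to D$ is regular by finite-type base change \cite[\href{https://stacks.math.columbia.edu/tag/07C1}{Tag 07C1}]{stacks-project}, and it is faithfully flat because faithful flatness is preserved under base change. Moreover $C$ is excellent, being of finite type over the excellent ring $T_n(k)^\dagger$, and in particular $C$ is a $G$-ring. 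Passing to $I$-adic completions, \autoref{prop:completions-regular-maps} (which applies since $C$ is a $G$-ring) shows that $\widehat{C}^I \to \widehat{D}^{ID}$ is regular, and \autoref{rem:faithful-flatness-completion} shows it is faithfully flat. Both completions are Noetherian of characteristic $p$, so this map feeds into \autoref{prop:descent-FIF} exactly as above.

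Since this is essentially an assembly of results already in place, I do not anticipate a serious conceptual obstacle. The points requiring care are: verifying at each step the Noetherian and $G$-ring hypotheses needed by \cite[\href{https://stacks.math.columbia.edu/tag/07C1}{Tag 07C1}]{stacks-project} and \autoref{prop:completions-regular-maps}, all of which follow from the stability of excellence under finite-type extensions; confirming that $\widehat{C}^I$ and $\widehat{D}^{ID}$ are indeed the mixed polynomial/power-series rings appearing in the statement; and noting that $T_n(k)[\underline{x}]\llbracket\underline{y}\rrbracket$ is reduced (it is even regular), so that $T_n(k)[\underline{x}]\llbracket\underline{y}\rrbracket^{1/p} \cong F_*\big(T_n(k)[\underline{x}]\llbracket\underline{y}\rrbracket\big)$ and the cited result of \cite{DESTate} does give $F$-intersection flatness in the sense of \autoref{def:FIF}.
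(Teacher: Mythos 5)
Your proposal is correct and follows essentially the same route as the paper: base-change the faithfully flat regular inclusion $T_n(k)^\dagger \hookrightarrow T_n(k)$ to the polynomial rings, pass to $(\underline{y})$-adic completions using \autoref{prop:completions-regular-maps} and \autoref{rem:faithful-flatness-completion}, and then descend $F$-intersection flatness from $T_n(k)[\underline{x}]\llbracket\underline{y}\rrbracket$ via \autoref{prop:descent-FIF}. The only differences are cosmetic (naming the intermediate rings and the explicit reducedness remark), so there is nothing to change.
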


\begin{proof}
Since $T_n(k)^\dagger \hookrightarrow T_n(k)$ is faithfully flat and regular (\autoref{prop:Dagger-affinoid-regular-to-Tate-excellent}), so is the finite type base change $T_n(k)^\dagger[x_1,\dots,x_k,y_1,\dots,y_\ell] \to T_n(k)[x_1,\dots,x_k,y_1,\dots,y_\ell]$ by \cite[\href{https://stacks.math.columbia.edu/tag/07C1}{Tag 07C1}]{stacks-project}.
Now, $T_n(k)^\dagger[x_1,\dots,x_k,y_1,\dots,y_\ell]$ is excellent, and so, the induced map of $(y_1,\dots,y_\ell)$-adic completion $T_n(k)^\dagger[x_1,\dots,x_k]\llbracket y_1,\dots,y_\ell \rrbracket \to T_n(k)[x_1,\dots,x_k]\llbracket y_1,\dots,y_\ell \rrbracket$ is faithfully flat (\autoref{rem:faithful-flatness-completion}) and regular  (\autoref{prop:completions-regular-maps}). The result now follows by descent (\autoref{prop:descent-FIF}) and the $F$-intersection flatness of $T_n(k)[x_1,\dots,x_n]\llbracket y_1,\dots,y_\ell \rrbracket$. This last fact is mentioned in \cite[Remarks~5.5.6 (a)]{DESTate}, which itself relies on the proof of the $F$-intersection flatness of $T_n(k)$ \cite[Corollary~5.5.3]{DESTate} and also $F$-intersection flatness of power series rings over $T_n(k)$ \cite[Corollary~5.5.5]{DESTate}.
\end{proof}

\begin{corollary}
\label{cor:Test-elements-dagger-affinoid}
Let $(k,|\cdot|)$ be a NA field of characteristic $p > 0$. Let $R$ be essentially of finite type over $T_n(k)^\dagger$ and regular in codimension $0$. Let $I$ be any ideal of $R$ ($I = 0$ is allowed). Then $\widehat{R}^I$ has a big test element. In fact, if $c \in R$ is not contained in any minimal primes of $R$ and $R_c$ is regular, then some power of $c$ is a big test element of $\widehat{R}^I$.
\end{corollary}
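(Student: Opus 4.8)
The plan is to realize $\widehat{R}^I$ as a homomorphic image of an excellent regular $F$-intersection flat ring $S$ of characteristic $p$, and then to invoke \autoref{thm:Sharp-big-test-elements}; here $S$ will be an ideal-adic completion of a localization of a polynomial ring over $T_n(k)^\dagger$.

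\emph{Structural reduction.} Since $R$ is essentially of finite type over $A\coloneqq T_n(k)^\dagger$, write $R\cong W^{-1}\bigl(A[x_1,\dots,x_m]/\mathfrak{a}\bigr)$ for suitable $m$, an ideal $\mathfrak{a}\subseteq A[x_1,\dots,x_m]$, and a multiplicative set $W$. Pick generators of $I$; after multiplying by units we may assume they are the images of elements $f_1,\dots,f_\ell\in A[x_1,\dots,x_m]$. Let $W'$ be the preimage of $W$, put $A'\coloneqq(W')^{-1}\bigl(A[x_1,\dots,x_m]\bigr)$, $D\coloneqq A'[y_1,\dots,y_\ell]$, and $\mathfrak{g}\coloneqq(\mathfrak{a},\,y_1-f_1,\dots,y_\ell-f_\ell)D$. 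Sending $y_j\mapsto f_j$ identifies $D/\mathfrak{g}$ with $R$, and since $\mathfrak{g}+(y_1,\dots,y_\ell)^nD=\mathfrak{g}+(f_1,\dots,f_\ell)^nD$ one gets $D/(\mathfrak{g}+(y_1,\dots,y_\ell)^nD)\cong R/I^n$ compatibly in $n$. The associated kernel system has surjective transition maps, so its $\varprojlim^1$ vanishes, and passing to the inverse limit exhibits $\widehat{R}^I$ as a homomorphic image of $S\coloneqq A'\llbracket y_1,\dots,y_\ell\rrbracket$, the $(y_1,\dots,y_\ell)$-adic completion of $D$.

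\emph{The ring $S$ is excellent, regular, and $F$-intersection flat.} Regularity: $T_n(k)^\dagger$ is regular (\autoref{rem:properties-dagger-Tate}), hence so are $A[x_1,\dots,x_m]$, its localization $A'$, the polynomial ring $D$, and the ideal-adic completion $S$ of the Noetherian regular ring $D$. Excellence: $T_n(k)^\dagger$ is excellent (\autoref{prop:Dagger-affinoid-regular-to-Tate-excellent}), hence so are $A[x_1,\dots,x_m]$, $A'$, and $D$; since $S$ is regular, hence Cohen--Macaulay, \autoref{rem:CM-quasi-excellent} reduces us to quasi-excellence of $S$, which is inherited from $D$ by its ideal-adic completion \cite{stacks-project}. $F$-intersection flatness: by \autoref{thm:Dagger-Tate-FIF} the ring $T_n(k)^\dagger[x_1,\dots,x_m]\llbracket y_1,\dots,y_\ell\rrbracket$ is $F$-intersection flat, and $S$ is obtained from it by a localization followed by an ideal-adic completion; $F$-intersection flatness --- equivalently, by the discussion in the introduction, flatness of Frobenius together with the Mittag--Leffler property of $F_*$ --- is preserved under both operations \cite{DET2023mittagintersectionflat,DESTate}. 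I expect this last point to be the thing to pin down most carefully; it is the same stability phenomenon underlying the example following \autoref{def:Frobenius-Ohm-Rush-trace}, and one could alternatively descend along $T_n(k)^\dagger\to T_n(k)$ after base change and completion, using \autoref{prop:descent-FIF} as in the proof of \autoref{thm:Dagger-Tate-FIF}.

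\emph{Applying Sharp's theorem.} Being essentially of finite type over the excellent ring $T_n(k)^\dagger$, $R$ is excellent, hence a $G$-ring, so $R\to\widehat{R}^I$ is a regular map \cite{stacks-project}. Consequently every minimal prime $\mathfrak{q}$ of $\widehat{R}^I$ contracts to a minimal prime $\mathfrak{p}$ of $R$, and --- using $R_\mathfrak{p}=\kappa(\mathfrak{p})$ since $R$ is regular in codimension $0$ --- $(\widehat{R}^I)_\mathfrak{q}$ is a zero-dimensional localization of the geometrically regular fiber $\widehat{R}^I\otimes_R\kappa(\mathfrak{p})$, hence a field; thus $\widehat{R}^I$ is regular in codimension $0$. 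Given $c\in R$ not in any minimal prime of $R$ with $R_c$ regular, its image $\bar c\in\widehat{R}^I$ lies in no minimal prime of $\widehat{R}^I$ (these contract into minimal primes of $R$ avoiding $c$), and $(\widehat{R}^I)_{\bar c}=\widehat{R}^I\otimes_R R_c$ is regular, being the base change of the regular ring $R_c$ along the regular map $R\to\widehat{R}^I$. Applying \autoref{thm:Sharp-big-test-elements} to the surjection $S\twoheadrightarrow\widehat{R}^I$ and the element $\bar c$ then shows that some power of $c$ is a big test element of $\widehat{R}^I$; since $R$ is excellent and regular in codimension $0$ its regular locus is a nonempty open set, so such a $c$ exists by prime avoidance, which gives the first assertion.
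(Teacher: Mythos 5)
Your overall architecture is the same as the paper's: present $\widehat{R}^I$ as a homomorphic image of an excellent regular $F$-intersection flat ring, check that ``regular in codimension $0$'' and the properties of $c$ pass from $R$ to $\widehat{R}^I$ along the regular map $R\to\widehat{R}^I$, and invoke \autoref{thm:Sharp-big-test-elements}. Your handling of the second step (minimal primes contract to minimal primes by going-down, $(\widehat{R}^I)_{\frq}$ regular because the fibers are geometrically regular, $(\widehat{R}^I)_c$ regular as a regular-base-change of $R_c$) is correct and essentially identical to the paper's. Your structural reduction realizing $\widehat{R}^I$ as a quotient of $S=A'\llbracket y_1,\dots,y_\ell\rrbracket$ is also correct, and in fact more explicit than what the paper writes.

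The genuine gap is the $F$-intersection flatness of $S$. Your primary justification is that $F$-intersection flatness is preserved under ideal-adic completion; this is not a result established in the paper, and the references you point to establish stability under completion for the \emph{Ohm--Rush trace} property of a ring map (\cite[Proposition~4.1.9(4)]{DET2023mittagintersectionflat}, as used in \autoref{cor:preserving-ORT-adjoining-variables}), not for intersection flatness of $F_*$ of a Noetherian ring --- note that $F_*\widehat{R}^I$ is the completion of $F_*R\otimes_R\widehat{R}^I$ rather than that base change itself, so the usual base-change results for flat Mittag--Leffler modules do not apply directly. You flag this yourself, but your fallback (descend along $S\to(W^{-1}T_n(k)[x_1,\dots,x_m])\llbracket\underline{y}\rrbracket$ via \autoref{prop:descent-FIF}) only relocates the problem: the target is again an ideal-adic completion of a localization, and \autoref{thm:Dagger-Tate-FIF} and \cite[Remarks~5.5.6(a)]{DESTate} cover only the unlocalized completions. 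The paper sidesteps all of this by exhibiting $\widehat{R}^I$ as a quotient of a \emph{localization} of $T_n(k)^\dagger[x_1,\dots,x_m]\llbracket\underline{y}\rrbracket$ and using only the fact that $F$-intersection flatness is preserved under localization \cite[Theorem~3.1.3]{DESTate}. To close your version of the argument you should either adopt that presentation, or rerun the Ohm--Rush-trace-plus-descent chain of \autoref{thm:berktate:intflat} and \autoref{thm:Dagger-Tate-FIF} with the localization inserted \emph{before} completing: the Ohm--Rush trace property of $T_{n,\rho}(k)[\underline{x},\underline{y}]\to T_{n,\rho}(\ell)[\underline{x},\underline{y}]$ survives both localization and $(\underline{y})$-adic completion by \cite[Proposition~4.1.9]{DET2023mittagintersectionflat}, after which purity descent yields intersection flatness of $F_*S$ exactly as in the unlocalized case.
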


\begin{proof}
The excellence of $R$ implies that $R \to \widehat{R}^I$ is a regular ring map \cite[\href{https://stacks.math.columbia.edu/tag/0AH2}{Tag 0AH2}]{stacks-project}. Since flat maps satisfy the going-down property, any minimal prime $\frq$ of $\widehat{R}^I$ must contract to a minimal prime $\frp$ of $R$. Moreover, since $R_\frp \to (\widehat{R}^I)_\frq$ is a regular local homomorphism of Noetherian local rings, $(\widehat{R}^I)_\frq$ is regular by \cite[\href{https://stacks.math.columbia.edu/tag/0H7S}{Tag 0H7S}]{stacks-project}. In other words, $\widehat{R}^I$ is regular in codimension $0$. If $c \in R$ is not contained in any minimal prime of $R$, then its image in $\widehat{R}^I$ cannot be contained in any minimal prime of $\widehat{R}^I$ by the previous argument. Furthermore, if $R_c$ is a regular ring, then since $R_c \to (\widehat{R}^I)_c$ is also a regular map, we have that $(\widehat{R}^I)_c$ is a regular ring as well. Then the result follows by \cite[Theorem~10.2]{SharpBigTestElements} because $\widehat{R}^I$ is a homomorphic image of a localization of an excellent regular ring of the form $T_n(k)^\dagger[x_1,\dots,x_k]\llbracket y_1,\dots,y_\ell \rrbracket$, and such a localization is $F$-intersection flat because $T_n(k)^\dagger[x_1,\dots,x_k]\llbracket y_1,\dots,y_\ell \rrbracket$ is $F$-intersection flat by \autoref{thm:Dagger-Tate-FIF} and because $F$-intersection flatness is preserved under localization \cite[Theorem~3.1.3]{DESTate}.
\end{proof}

\section{\emph{F}-intersection flatness of the ring of convergent power series in the polydisc \(\mathbb{B}^n_\rho(k)\)}

\begin{defn}
    Let \((k,|\cdot|)\) be a non-Archimedean field, and let \((E,||\cdot||_E)\) be a normed space over \(k\). Let \(\rho \in (\mathbb{R}_{> 0})^n\). Throughout the following, $I$ denotes a multi-index $(i_1, \ldots, i_n)$, and $\rho^I$ denotes $(\rho_1^{i_1}, \ldots, \rho_n^{i_n})$.
    A (multi-)sequence \((a_{I})_{I}\) in \(E\) is \(\rho\)-null if
    \(||a_{I}||_E\rho^{I} \xrightarrow{} 0\) as $i_1 + \dots + i_n \to \infty$.
\end{defn}

\begin{lem}
    Let \((k,|\cdot|)\) be a non-Archimedean field, and let \((E,||\cdot||_E)\) and \((F,||\cdot||_F)\) be normed spaces over \(k\).
    Suppose \(f \colon E \xrightarrow{} F\) is a \(k\)-linear continuous map and $\rho \in (\mathbb{R}_{> 0})^n$.
    Then \(f\) maps \(\rho\)-null sequences to \(\rho\)-null sequences.
\end{lem}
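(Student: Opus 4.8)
The plan is to reduce the statement to the standard fact that a continuous $k$-linear map between non-Archimedean normed $k$-spaces is \emph{bounded}, i.e.\ that there is a real constant $C > 0$ with $\|f(x)\|_F \le C\,\|x\|_E$ for all $x \in E$, and then to conclude by a squeeze argument. Once boundedness is in hand, the lemma is immediate: if $(a_I)_I$ is $\rho$-null in $E$, then for every multi-index $I$ we have $0 \le \|f(a_I)\|_F\,\rho^I \le C\,\|a_I\|_E\,\rho^I$, and since $\|a_I\|_E\,\rho^I \to 0$ as $i_1 + \dots + i_n \to \infty$ by hypothesis, the right-hand side tends to $0$; hence $\|f(a_I)\|_F\,\rho^I \to 0$, so $(f(a_I))_I$ is $\rho$-null.

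So the only real content is Step~1, the passage from continuity to boundedness, and here is how I would do it. By continuity of $f$ at $0$ (applied with $\varepsilon = 1$, using $f(0) = 0$ by linearity), there is $\delta > 0$ such that $\|x\|_E < \delta$ implies $\|f(x)\|_F \le 1$. Fix $\pi \in k$ with $0 < |\pi| < 1$; such $\pi$ exists because $k$ is non-trivially valued. Given $x \in E$ with $x \ne 0$, let $m \in \mathbb{Z}$ be the least integer with $|\pi|^m\|x\|_E < \delta$ (this exists since $|\pi|^m\|x\|_E \to 0$ as $m \to +\infty$ and $\to \infty$ as $m \to -\infty$). Minimality of $m$ gives $|\pi|^{m-1}\|x\|_E \ge \delta$, i.e.\ $|\pi|\,\delta \le |\pi|^m\|x\|_E < \delta$. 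Then $\|\pi^m x\|_E < \delta$, so $|\pi|^m\|f(x)\|_F = \|f(\pi^m x)\|_F \le 1$, whence $\|f(x)\|_F \le |\pi|^{-m} \le (|\pi|\,\delta)^{-1}\|x\|_E$. Thus $C := (|\pi|\,\delta)^{-1}$ works, the case $x = 0$ being trivial. Alternatively, one may simply invoke a standard reference on non-Archimedean functional analysis, e.g.\ \cite{BGRNAanalysis}, for the equivalence of continuity and boundedness for $k$-linear maps of normed $k$-spaces, and skip the rescaling argument entirely.

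I do not expect any genuine obstacle. The only place the hypotheses on $k$ are used is the rescaling trick in Step~1, which needs $k$ to be non-trivially valued (so that a $\pi$ with $0 < |\pi| < 1$ is available); the rest is the one-line squeeze of Step~2. The only mild care required is to make sure the norm on the $k$-spaces is taken to be homogeneous, $\|\lambda x\| = |\lambda|\,\|x\|$, which is part of the definition of a normed space over $k$ and is what justifies $\|f(\pi^m x)\|_F = |\pi|^m\|f(x)\|_F$.
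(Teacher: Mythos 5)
Your proof is correct and follows essentially the same route as the paper: reduce to the fact that a continuous $k$-linear map of normed $k$-spaces is bounded, then conclude by the squeeze $\|f(a_I)\|_F\rho^I \le C\|a_I\|_E\rho^I \to 0$. The only difference is that the paper cites a reference for the continuity-implies-boundedness step, whereas you supply the standard rescaling argument yourself; both are fine.
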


\begin{proof}
    Since \(f\) is continuous, it is bounded (see for instance \cite[Lem.\ 2.11]{DattaMurayamaTate}. Thus, there exists $M > 0$ such that for all $a \in E$, $||f(a)||_F \leq M||a||_E$.
    Then for any $\rho$-null sequence $(a_I)_I$ of $E$, we have  \(||f(a_{I})||_F\rho^{I} \leq M||a_{I}||_E\rho^{I} \xrightarrow{} 0\) as $i_1+\dots +i_n \to \infty$.
\end{proof}

\begin{corollary}
    \label{cor:cont:berktate}
    Let $(k,|\cdot|_k) \hookrightarrow (\ell,|\cdot|_\ell)$ be an extension of non-Archimedean fields. Let $\rho \in (\mathbb{R}_{> 0})^n$. Then any continuous $k$-linear map \(\ell \xrightarrow{} k\)
    induces a $T_{n,\rho}(k)$-linear map 
    \(T_{n,\rho}(\ell) \xrightarrow{} T_{n,\rho}(k)\).
\end{corollary}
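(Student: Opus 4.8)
The plan is to construct the induced map by applying the given map $\phi \colon \ell \to k$ to the coefficients of a restricted power series, and then to verify that this is well-defined, $k$-linear, and $T_{n,\rho}(k)$-linear.

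First I would recall the setup: since the norm on $\ell$ restricts to that on $k$, there is a natural inclusion $T_{n,\rho}(k) \hookrightarrow T_{n,\rho}(\ell)$ (see \autoref{prop:convergent-powerseries-general-polydisc}\autoref{prop:convergent-powerseries-general-polydisc.b}), and an element of $T_{n,\rho}(\ell)$ is a power series $\sum_I a_I X^I$ with $a_I \in \ell$ such that the multi-sequence $(a_I)_I$ is $\rho$-null in $\ell$. I would then define
\[
\Phi \colon T_{n,\rho}(\ell) \longrightarrow T_{n,\rho}(k), \qquad \sum_I a_I X^I \longmapsto \sum_I \phi(a_I) X^I .
\]
The one thing that needs proof here is that the right-hand side again lies in $T_{n,\rho}(k)$, i.e.\ that $(\phi(a_I))_I$ is $\rho$-null in $k$; but this is exactly the preceding Lemma applied to $\phi$, viewed as a continuous $k$-linear map of normed $k$-vector spaces $\ell \to k$. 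Additivity and $k$-linearity of $\Phi$ are then immediate, since $\phi$ is $k$-linear and operates on each coefficient separately.

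The remaining point is $T_{n,\rho}(k)$-linearity, i.e.\ $\Phi(ba) = b\,\Phi(a)$ for $b = \sum_J b_J X^J \in T_{n,\rho}(k)$ (so each $b_J \in k$) and $a = \sum_I a_I X^I \in T_{n,\rho}(\ell)$. Here I would use that, for any multi-index $M$, the coefficient of $X^M$ in the product $ba$ is the \emph{finite} sum $\sum_{I+J=M} b_J a_I$ — finite because a multi-index with non-negative integer entries can be written as a sum of two such in only finitely many ways. Since $\phi$ is $k$-linear and each $b_J$ is a scalar in $k$, applying $\phi$ term by term gives $\sum_{I+J=M} b_J\,\phi(a_I)$, which is precisely the coefficient of $X^M$ in $b\,\Phi(a)$; as this holds for every $M$, we obtain $\Phi(ba) = b\,\Phi(a)$.

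I do not expect a real obstacle: the only substantive input is the preceding Lemma, which secures well-definedness, and after that the $T_{n,\rho}(k)$-linearity is a purely formal check resting solely on the finiteness of the coefficient sums in a product of restricted power series, so no convergence questions arise.
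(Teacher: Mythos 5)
Your proposal is correct and follows essentially the same route as the paper: the induced map is defined coefficient-wise, and the only substantive point --- that the image coefficient sequence is again $\rho$-null so the map lands in $T_{n,\rho}(k)$ --- is exactly the preceding lemma applied to the continuous $k$-linear functional. The paper leaves the $T_{n,\rho}(k)$-linearity check implicit, whereas you spell it out via the finiteness of the Cauchy-product coefficient sums; that verification is a correct and harmless addition.
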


\begin{proof}
    By definition of the \(\rho\)-Gauss norm, such a map need only take
    \(\rho\)-null sequences to null sequences
\end{proof}

\begin{set}
    \label{set:k:ell}
    let $\rho \in (\mathbb{R}_{> 0})^n$, and let \((k,|\cdot|_k) \inj (\ell,|\cdot|_\ell)\) be an extension of non-Archimedean fields,
    such that either
    \begin{itemize}
        \item \(k\) is spherically complete, or
        \item \(\ell\) has a dense subspace over \(k\) 
            with a countable basis.
    \end{itemize}
\end{set}

\begin{lem}
    \label{lem:berktate:split}
    Let \(k, \ell\) as in \autoref{set:k:ell}.
    Then \(T_{n,\rho}(k) \xrightarrow{} T_{n,\rho}(\ell)\) 
    is a split extension of \(T_{n,\rho}(k)\)-algebras.
\end{lem}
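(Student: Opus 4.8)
The plan is to reduce \autoref{lem:berktate:split} to a statement purely about the coefficient field extension — namely that $k \hookrightarrow \ell$ admits a continuous $k$-linear retraction — and then transport that retraction coefficient-wise to convergent power series by means of \autoref{cor:cont:berktate}.

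\textbf{Reduction.} Suppose we are handed a bounded $k$-linear map $\phi\colon \ell \to k$ with $\phi|_k = \id_k$ (equivalently $\phi(1) = 1$). By \autoref{cor:cont:berktate} it induces a $T_{n,\rho}(k)$-linear map $\Phi\colon T_{n,\rho}(\ell) \to T_{n,\rho}(k)$, acting by $\sum_I a_I X^I \mapsto \sum_I \phi(a_I) X^I$. If $\sum_I a_I X^I$ already lies in $T_{n,\rho}(k)$, then every $a_I \in k$, so $\phi(a_I) = a_I$ and $\Phi$ fixes it; hence $\Phi$ is a $T_{n,\rho}(k)$-linear retraction of the inclusion $T_{n,\rho}(k) \hookrightarrow T_{n,\rho}(\ell)$, i.e. a $T_{n,\rho}(k)$-module splitting. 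That inclusion is therefore a split extension of $T_{n,\rho}(k)$-algebras (its injectivity is already part of \autoref{prop:convergent-powerseries-general-polydisc}\autoref{prop:convergent-powerseries-general-polydisc.b}, and also follows from the existence of $\Phi$). So the whole problem reduces to producing $\phi$.

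\textbf{Producing $\phi$.} Since $(k,|\cdot|_k) \hookrightarrow (\ell,|\cdot|_\ell)$ is an extension of NA fields, $|\cdot|_\ell$ restricts to $|\cdot|_k$ on $k$, so $k$ is a one-dimensional normed $k$-subspace of $\ell$ with $\|1\|_\ell = 1$, and finding $\phi$ amounts to extending the norm-one functional $\id_k$ on this subspace to a bounded $k$-linear functional on $\ell$. In the first case of \autoref{set:k:ell}, where $k$ is spherically complete, this is immediate from the non-Archimedean Hahn--Banach (Ingleton) theorem. In the second case, where $\ell$ has a dense $k$-subspace with a countable basis (i.e. $\ell$ is of countable type over $k$), I would argue with $t$-orthogonal bases: the singleton $\{1\}$ is trivially $t$-orthogonal, so one extends it to a $t$-orthogonal base $\{1 = e_0, e_1, e_2, \dots\}$ of $\ell$ for some fixed $t \in (0,1]$; the coordinate functional $\sum_i \lambda_i e_i \mapsto \lambda_0$ is then bounded of norm $\le t^{-1}$ (as $\|e_0\| = 1$) on the dense span and extends by continuity to the required $\phi\colon \ell \to k$. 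Both ingredients are standard non-Archimedean functional analysis — of the kind already used in \cite{DattaMurayamaTate} — and if a ready-made statement that $k \hookrightarrow \ell$ splits in the category of $k$-Banach spaces under the hypotheses of \autoref{set:k:ell} is available in the cited literature, this step may simply quote it.

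\textbf{Expected obstacle.} The reduction is formal and \autoref{cor:cont:berktate} does all the real work of passing to power series; the only genuine content is the construction of $\phi$, and within that the countable-type case, where one cannot invoke Hahn--Banach directly and must instead produce a $t$-orthogonal base and verify that the associated coordinate functional is bounded. Some care is needed because the valuation of $k$ may be merely non-trivial (e.g. discrete), forcing the use of $t$-orthogonal rather than genuinely orthogonal bases — but this only inflates the norm of $\phi$ by the harmless factor $t^{-1}$, which is irrelevant since continuity of $\phi$ is all we need.
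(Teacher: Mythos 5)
Your proposal is correct and follows essentially the same route as the paper: reduce to the existence of a continuous $k$-linear retraction $\ell \to k$ and push it to power series coefficient-wise via \autoref{cor:cont:berktate}. The paper simply quotes \cite[Theorem~5.2.5~(a)]{DESTate} (the spherically complete / Ingleton case) and \cite[Theorem~5.2.9]{DESTate} (the countable-type case) for that retraction, which are exactly the two functional-analytic facts you sketch.
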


\begin{proof}
    Following \cite[Theorem~5.4.1]{DESTate},
    we have, by \cite[Theorem~5.2.5~(a)]{DESTate} or
    \cite[Theorem~5.2.9]{DESTate}, a continuous
    linear functional \(\ell \xrightarrow{} k\) 
    extending the identity on \(k\).
    By \autoref{cor:cont:berktate},
    this gives a bounded map
    \(T_{n,\rho}(\ell) \xrightarrow{} T_{n,\rho}(k)\)
    which is visibly a splitting
    (as it maps 1 to 1).
\end{proof}

\begin{proposition}
    \label{prop:berktate:ort}
    Let \(k, \ell\) be as in \autoref{set:k:ell}.
    Then \(T_{n,\rho}(k) \xrightarrow{} T_{n,\rho}(\ell)\)
    is Ohm-Rush trace.
\end{proposition}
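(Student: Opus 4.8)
The plan is to deduce the Ohm-Rush trace property for $T_{n,\rho}(k) \hookrightarrow T_{n,\rho}(\ell)$ from the splitting established in \autoref{lem:berktate:split} together with the structural properties of $T_{n,\rho}(\ell)$ as a Banach $T_{n,\rho}(k)$-module. First I would recall that $T_{n,\rho}(\ell) \cong T_{n,\rho}(k) \widehat{\otimes_k} \ell$ (\autoref{prop:convergent-powerseries-general-polydisc}\autoref{prop:convergent-powerseries-general-polydisc.b}), so an element $f \in T_{n,\rho}(\ell)$ can be written as a $\rho$-convergent series $\sum_I a_I X^I$ with $a_I \in \ell$. The natural $T_{n,\rho}(k)$-linear maps $T_{n,\rho}(\ell) \to T_{n,\rho}(k)$ arise by \autoref{cor:cont:berktate} from continuous $k$-linear functionals $\ell \to k$; applying such a functional $\lambda$ coefficient-wise sends $\sum_I a_I X^I$ to $\sum_I \lambda(a_I) X^I$. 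So for fixed $f$, the trace ideal $\Tr_{T_{n,\rho}(\ell)}(f)$ contains every ideal of the form $(\lambda(a_I) : I)$ as $\lambda$ ranges over continuous $k$-functionals on $\ell$ (and more, coming from $T_{n,\rho}(k)$-linear maps not of this simple form, but we will not need those).

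The key step is then to show $f \in \Tr_{T_{n,\rho}(\ell)}(f) \cdot T_{n,\rho}(\ell)$. I would argue exactly as in \cite[Theorem~5.4.1]{DESTate}: choose, using the setup in \autoref{set:k:ell} and \cite[Theorem~5.2.5~(a)]{DESTate} or \cite[Theorem~5.2.9]{DESTate}, a continuous $k$-linear map $\ell \to k$ that restricts to the identity on $k$; but more importantly, for the given $f = \sum_I a_I X^I$ with only countably many nonzero $a_I$, the $k$-subspace $V \subseteq \ell$ spanned (topologically) by $\{a_I\}$ together with $1$ is separable, so one obtains continuous $k$-linear functionals $\lambda_j \colon \ell \to k$ and elements $b_j \in \ell$ with $\sum_j b_j \lambda_j(v) = v$ for all $v$ in (the closure of) $V$ — this is precisely the Ohm-Rush-type splitting data extracted in \loccit. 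Applying this to each coefficient $a_I$ gives $a_I = \sum_j b_j \lambda_j(a_I)$, hence $f = \sum_j (\sum_I \lambda_j(a_I) X^I) \cdot b_j$ after checking the double series converges in the $\rho$-Gauss norm, which it does because $\|\lambda_j(a_I)\|\rho^I \to 0$ for each $j$ by \autoref{cor:cont:berktate} and the sum over $j$ is suitably controlled. Each $\sum_I \lambda_j(a_I) X^I$ lies in $\Tr_{T_{n,\rho}(\ell)}(f)$ by the discussion above (it is $\lambda_j$ applied coefficient-wise, which is $T_{n,\rho}(k)$-linear and sends $f$ to it), so $f \in \Tr_{T_{n,\rho}(\ell)}(f) \cdot T_{n,\rho}(\ell)$ as required.

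I expect the main obstacle to be the convergence bookkeeping for the double series $\sum_j b_j \sum_I \lambda_j(a_I) X^I$: one needs uniform control over the operator norms of the $\lambda_j$ and the norms of the $b_j$ so that rearranging into $\sum_I (\sum_j b_j \lambda_j(a_I)) X^I$ is legitimate and the result again lies in $T_{n,\rho}(\ell)$. This is handled in the Banach-space part of \cite{DESTate} (the construction there produces an orthogonal/orthonormal-type Schauder decomposition with norm-bounded data), so the honest work is to quote the correct statement — \cite[Theorem~5.2.5~(a)]{DESTate} or \cite[Theorem~5.2.9]{DESTate} — and verify that tensoring up to the $\rho$-Gauss norm preserves the needed bounds, exactly as \autoref{cor:cont:berktate} and \autoref{lem:berktate:split} already do for single maps. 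Once that is in place, the Ohm-Rush trace property is immediate from \autoref{def:Ohm-Rush-trace}. Alternatively, and perhaps more cleanly, one can cite the abstract machinery: by \cite[Proposition~4.3.8]{DET2023mittagintersectionflat} and the theory developed in \loccit, a split Banach-module extension of the relevant type is automatically Ohm-Rush trace, so \autoref{lem:berktate:split} together with the countable-basis/spherical-completeness hypothesis in \autoref{set:k:ell} formally yields the conclusion — I would present the coefficient-wise argument as the main line and remark on this packaging.
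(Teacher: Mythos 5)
Your overall strategy --- applying continuous $k$-linear functionals coefficient-wise via \autoref{cor:cont:berktate} to produce elements of $\Tr(f)$, following \cite[Theorem~5.4.1]{DESTate} --- is the same as the paper's, but the way you assemble these into a proof diverges at two points, and both create genuine gaps.

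First, the paper does \emph{not} attempt an exact decomposition $f=\sum_j \phi_j(f)\,b_j$ over the full (generally infinite-dimensional) closed span of the coefficients. Instead it truncates: for each $\epsilon>0$ it sets $f_\epsilon=\sum_{\epsilon\le |a_I|_\ell\rho^I}a_I X^I$, which has only finitely many terms because the coefficient sequence is $\rho$-null, so the relevant coefficient space $\ell_\epsilon$ is \emph{finite-dimensional}. The cited inputs \cite[Theorems~5.2.5~(a) and~5.2.9]{DESTate} produce a basis of a finite-dimensional subspace with $1\le|x_{\epsilon,i}|_\ell\le M$ and extensions of the finitely many dual functionals with operator norm $\le 4$; they do not hand you a Schauder basis of an infinite-dimensional closed subspace together with simultaneous, uniformly norm-controlled extensions of all its coordinate functionals. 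Your plan needs exactly that (uniform bounds on $\|\lambda_j\|$ and $|b_j|$ over all $j$ are what make your double series summable and rearrangeable), so as written it rests on a statement stronger than what the references provide. Such a decomposition may well exist for countable-type subspaces, but establishing it is additional non-Archimedean functional analysis that the truncation argument deliberately avoids.

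Second, even granting a convergent expansion $f=\sum_{j=1}^{\infty}\phi_j(f)b_j$ with each $\phi_j(f)\in\Tr(f)$, the conclusion $f\in\Tr(f)T_{n,\rho}(\ell)$ does not follow formally: an infinite convergent sum of elements of an ideal lies a priori only in the \emph{closure} of that ideal. The missing ingredient --- which the paper states up front --- is that ideals of $T_{n,\rho}(\ell)$ are closed in the Gauss topology (\autoref{prop:convergent-powerseries-general-polydisc}\autoref{prop:convergent-powerseries-general-polydisc.f}); this is what lets one pass from approximants $g_\epsilon\in\Tr(f)T_{n,\rho}(\ell)$ with $\|f-g_\epsilon\|<4M\epsilon$ (or from your partial sums) to membership of $f$ itself. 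Finally, your closing remark that \cite[Proposition~4.3.8]{DET2023mittagintersectionflat} together with the splitting of \autoref{lem:berktate:split} ``formally yields'' the Ohm-Rush trace property is not correct: that proposition says Ohm-Rush trace implies intersection flat, not the reverse, and a mere splitting of $T_{n,\rho}(k)\to T_{n,\rho}(\ell)$ is far weaker than the Ohm-Rush trace property; there is no such abstract shortcut.
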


\begin{proof}[Sketch of proof]
    We need to show that \(f \in T_{n,\rho}(\ell)\) 
    is in the ideal \(\Tr(f)T_{n,\rho}(\ell)\).
    We will show, following, \cite[Theorem~5.4.1]{DESTate},
    that there exists a real number \(C\), with \(0 < C\), such that for every  \(\epsilon\) with \(0 < \epsilon\),
    there is a \(g_{\epsilon} \in \Tr(f)T_{n,\rho}(\ell)\) 
    such that \(||f - g_{\epsilon}|| < C \epsilon\). This implies the theorem because ideals of $T_{n,\rho}$ are closed under the Gauss topology.

    Now,
    for \(f = \sum_{\nu \in \mathbb{N}^{n}}^{} a_{\nu}X^{\nu} \),
    let 
    \[f_{\epsilon} = \sum_{\epsilon \leq |a_{I}|_\ell\rho^{I}}^{} a_{\nu}X^{\nu},\]
    so that \(||f - f_{\epsilon}|| < \epsilon\).
    Then the argument from \cite[Theorem~5.4.1]{DESTate},
    paragraph 4, until Equation 5.4.3.5,
    applies verbatim, so we have
    \begin{itemize}
        \item a $k$-vector subspace \(\ell_{\epsilon}\) of $\ell$
            spanned by the coefficients of \(f_{\epsilon}\).
        \item a basis \(\{x_{\epsilon,1},\dots, x_{\epsilon,n_{\epsilon}}\}\) 
            of \(\ell_{\epsilon}\) 
            such that \(1 \leq  |x_{\epsilon,i}|_\ell \leq M\), 
            where \(M\) does not depend on \(i\) or \(\epsilon\).
        \item continuous linear functionals
            \(\widetilde{x_{\epsilon,i}^{\star}} \colon \ell \to k\) extending
            the dual basis vectors \(x_{\epsilon,i}^{\star}\) 
            for \(i = 1, \ldots, n_{\epsilon}\),
            such that 
            \(|| \widetilde{x_{\epsilon,i}^{\star}} || \leq 4.\) Here we are using the \emph{operator norm} or \emph{Lipschitz norm} on the linear functionals.
    \end{itemize}

    Still following \cite[Theorem~5.4.1]{DESTate},
    by \autoref{cor:cont:berktate} the functionals
    \(\widetilde{x_{\epsilon,i}^{\star}}\) induce \(T_{n,\rho}(k)\)-linear
    maps \(\phi_{\epsilon,i}\) by applying the functional
    coefficient-wise.
    We define \(g_{\epsilon}\) to be \(\sum_{i=1}^{n_{\epsilon}} \phi_{\epsilon,i}(f) x_{\epsilon,i}
    \).
    Now, the rest of the proof of \cite[Theorem~5.4.1]{DESTate},
    applies, verbatim, with the quantity \(|a_{\nu}|_{\ell}\) 
    replaced by \(|a_{\nu}|_{\ell}\rho^{I}\) to show that 
    \(||f - g_{\epsilon}|| < 4M\epsilon\).
\end{proof}

\begin{corollary}
\label{cor:preserving-ORT-adjoining-variables}
Let \(k, \ell\) be as in \autoref{set:k:ell}. Let $\underline{x} = x_1,\dots,x_r$ and $\underline{y} = y_1,\dots,y_s$ be indeterminates. Then $T_{n,\rho}(k)[\underline{x}]\llbracket\underline{y}\rrbracket \to T_{n,\rho}(\ell)[\underline{x}]\llbracket\underline{y}\rrbracket$ is Ohm-Rush trace.
\end{corollary}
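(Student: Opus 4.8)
The plan is to bootstrap from \autoref{prop:berktate:ort} by adjoining variables one layer at a time, using the stability of the Ohm-Rush trace property under polynomial extensions and under ideal-adic completion. By \autoref{prop:berktate:ort} the map $T_{n,\rho}(k) \to T_{n,\rho}(\ell)$ is Ohm-Rush trace; it is moreover faithfully flat by \autoref{prop:convergent-powerseries-general-polydisc}\autoref{prop:convergent-powerseries-general-polydisc.b}, and $T_{n,\rho}(k)$, $T_{n,\rho}(\ell)$ are Noetherian (indeed excellent and regular) by \autoref{prop:convergent-powerseries-general-polydisc}\autoref{prop:convergent-powerseries-general-polydisc.d}. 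Consequently $T_{n,\rho}(k)[\underline{x}]$, $T_{n,\rho}(\ell)[\underline{x}]$ and their $\underline{y}$-adic completions are all Noetherian as well.

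First I would adjoin the polynomial variables $\underline{x}$. Since $T_{n,\rho}(\ell)[\underline{x}] = T_{n,\rho}(\ell) \otimes_{T_{n,\rho}(k)} T_{n,\rho}(k)[\underline{x}]$, and the Ohm-Rush trace property of a (flat) module is preserved under adjoining polynomial variables to the base ring (see \cite{OhmRushcontent}, and \cite{DET2023mittagintersectionflat} for the module-theoretic formulation), the map $T_{n,\rho}(k)[\underline{x}] \to T_{n,\rho}(\ell)[\underline{x}]$ is again a flat Ohm-Rush trace map of Noetherian rings. Next I would $\underline{y}$-adically complete. Here I would invoke the power series analogue of the previous step: if $A \to B$ is a flat Ohm-Rush trace map of Noetherian rings, then the induced map $A\llbracket\underline{y}\rrbracket \to B\llbracket\underline{y}\rrbracket$ on $\underline{y}$-adic completions is again Ohm-Rush trace. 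This is precisely the content of the ascent argument behind \cite[Corollary~5.5.5 and Remarks~5.5.6 (a)]{DESTate}. Applying it with $A = T_{n,\rho}(k)[\underline{x}]$ and $B = T_{n,\rho}(\ell)[\underline{x}]$, and using that $\widehat{C[\underline{y}]}^{(\underline{y})} = C\llbracket\underline{y}\rrbracket$ for any ring $C$, gives that $T_{n,\rho}(k)[\underline{x}]\llbracket\underline{y}\rrbracket \to T_{n,\rho}(\ell)[\underline{x}]\llbracket\underline{y}\rrbracket$ is Ohm-Rush trace, as desired.

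The step I expect to be the main obstacle is the last one. Unlike the polynomial step, the completion is not a base change: $T_{n,\rho}(\ell)[\underline{x}]$ is very far from module-finite over $T_{n,\rho}(k)[\underline{x}]$, so $T_{n,\rho}(\ell)[\underline{x}]\llbracket\underline{y}\rrbracket$ cannot be written as $T_{n,\rho}(\ell)[\underline{x}] \otimes_{T_{n,\rho}(k)[\underline{x}]} T_{n,\rho}(k)[\underline{x}]\llbracket\underline{y}\rrbracket$, and the Ohm-Rush trace property does not transfer for free. One instead has to argue directly: for $f = \sum_{\alpha} f_\alpha \underline{y}^\alpha$, build a trace ideal from those of the coefficients $f_\alpha$ by applying $T_{n,\rho}(k)[\underline{x}]$-linear functionals coefficient-wise, and use a Mittag-Leffler type uniformity to pass to the $\underline{y}$-adic limit, exactly as in the proof for Tate algebras. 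The fact that $T_{n,\rho}(k) \to T_{n,\rho}(\ell)$ is split (\autoref{lem:berktate:split}) provides exactly the uniformity needed to carry this out.
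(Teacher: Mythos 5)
Your argument is correct and follows essentially the same route as the paper: base change to adjoin the polynomial variables (using preservation of the Ohm-Rush trace property under base change) followed by stability under ideal-adic completion of Noetherian rings. The completion step you flag as the main obstacle is not something you need to re-prove via a Mittag-Leffler argument --- it is exactly \cite[Proposition~4.1.9 (4)]{DET2023mittagintersectionflat}, which the paper cites as a black box.
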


\begin{proof}
Since the property of being Ohm-Rush trace is preserved under base change (\cite[Proposition~4.1.9 (1)]{DET2023mittagintersectionflat}), \autoref{prop:berktate:ort} implies that $T_{n,\rho}(k)[\underline{x},\underline{y}] \to T_{n,\rho}(\ell)[\underline{x},\underline{y}]$ is Ohm-Rush trace. Now, for a map of Noetherian rings, the Ohm-Rush trace property is also preserved by ideal-adic completion (\cite[Proposition~4.1.9 (4)]{DET2023mittagintersectionflat}). Thus, $T_{n,\rho}(k)[\underline{x}]\llbracket\underline{y}\rrbracket \to T_{n,\rho}(\ell)[\underline{x}]\llbracket\underline{y}\rrbracket$ is Ohm-Rush trace.
\end{proof}

One can also give a different proof of faithful flatness of extensions of Berkovich Tate algebras for extensions of NA fields in \autoref{set:k:ell}.

\begin{corollary}
    \label{cor:berktate:fflat}
    Let \(\ell, k\) as in \autoref{set:k:ell}.
    Then \(T_{n,\rho}(k) \xrightarrow{} T_{n,\rho}(\ell)\) 
    is faithfully flat.
\end{corollary}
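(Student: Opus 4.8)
The plan is to deduce faithful flatness directly from the two facts established immediately above, bypassing the functional-analytic input of \autoref{prop:convergent-powerseries-general-polydisc}\autoref{prop:convergent-powerseries-general-polydisc.b}: \autoref{prop:berktate:ort} will supply flatness, and \autoref{lem:berktate:split} will supply the ``faithful'' part.

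First, for flatness: since $T_{n,\rho}(\ell)$ is an Ohm-Rush trace $T_{n,\rho}(k)$-module by \autoref{prop:berktate:ort}, it is intersection flat by \cite[Proposition~4.3.8]{DET2023mittagintersectionflat}, and an intersection flat module is flat by \cite[Proposition~5.5]{HochsterJeffriesintflatness} (compare the remark following \autoref{def:IF}). Hence $T_{n,\rho}(k) \to T_{n,\rho}(\ell)$ is a flat ring map.

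Next, for faithfulness: by \autoref{lem:berktate:split} the structure map admits a $T_{n,\rho}(k)$-linear retraction $T_{n,\rho}(\ell) \to T_{n,\rho}(k)$. Tensoring this retraction against an arbitrary $T_{n,\rho}(k)$-module $M$ shows that $M \to M \otimes_{T_{n,\rho}(k)} T_{n,\rho}(\ell)$ is a split monomorphism; in particular $M \otimes_{T_{n,\rho}(k)} T_{n,\rho}(\ell) = 0$ forces $M = 0$. Combined with the flatness just established, this is precisely faithful flatness.

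I do not expect a genuine obstacle here --- the corollary is essentially a bookkeeping statement --- but it is worth recording why both inputs are needed: a split injection of modules need not be flat (so \autoref{lem:berktate:split} alone does not suffice), while a flat ring map need not be faithfully flat (so \autoref{prop:berktate:ort} alone does not suffice); it is the conjunction of the two that yields the conclusion.
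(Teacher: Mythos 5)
Your argument is correct and is essentially the paper's own proof: flatness comes from the Ohm--Rush trace property of \autoref{prop:berktate:ort} (an Ohm--Rush trace module is intersection flat, hence flat), and faithfulness comes from the splitting of \autoref{lem:berktate:split}. You merely spell out the two steps in slightly more detail than the paper does.
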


\begin{proof}
The map is flat by \autoref{prop:berktate:ort} because Ohm-Rush trace modules are flat. As the map also splits by  \autoref{lem:berktate:split}, it is faithfully flat.
\end{proof}

\begin{theorem}
    \label{thm:berktate:intflat}
    Let \((k,|\cdot|)\) be a non-Archimedean field. Fix an algebraic closure $\overline{k}$ of $k$ and let $\widehat{\overline{k}}$ be the completion of $\overline{k}$ with respect to the unique norm that extends the norm on $k$. Let $\underline{x} = x_1,\dots,x_r$ and $\underline{y} = y_1,\dots,y_s$ be indeterminates. Then \(T_{n,\rho}(\widehat{\overline{k}})[\underline{x}]\llbracket\underline{y}\rrbracket\) is intersection flat as a \(T_{n,\rho}(k)[\underline{x}]\llbracket\underline{y}\rrbracket\)-algebra.
\end{theorem}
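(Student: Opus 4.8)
The plan is to deduce this from \autoref{cor:preserving-ORT-adjoining-variables} by an exhaustion argument, since one cannot apply that corollary directly with $\ell = \widehat{\overline{k}}$: the extension $\widehat{\overline{k}}/k$ need not have a dense $k$-subspace with a countable basis, and $k$ need not be spherically complete, so $(k,\widehat{\overline{k}})$ need not lie in \autoref{set:k:ell}. Write $A \coloneqq T_{n,\rho}(k)[\underline{x}]\llbracket\underline{y}\rrbracket$ and $C \coloneqq T_{n,\rho}(\widehat{\overline{k}})[\underline{x}]\llbracket\underline{y}\rrbracket$. First I would show that $C$ is a filtered union of $A$-subalgebras of the form $C_\ell \coloneqq T_{n,\rho}(\ell)[\underline{x}]\llbracket\underline{y}\rrbracket$, where $\ell$ ranges over the complete subfields $k \subseteq \ell \subseteq \widehat{\overline{k}}$ obtained as the closure of a subfield $E \subseteq \overline{k}$ that is algebraic over $k$ and generated over $k$ by countably many elements. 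Such an $E$ has countable $k$-dimension (it is an increasing union of finite extensions of $k$), and $\ell = \overline{E}$ is again a field: if $0 \neq x = \lim_n e_n$ with $e_n \in E$, then $|e_n| = |x|$ for $n \gg 0$, so the $e_n^{-1} \in E$ form a Cauchy sequence converging to $x^{-1}$. Hence $E$ is a dense $k$-subspace of $\ell$ with a countable basis, so $(k,\ell)$ lies in \autoref{set:k:ell}. These $\ell$'s are directed under inclusion, and since $\overline{k}$ is dense in $\widehat{\overline{k}}$ while every element of $C$ involves only countably many coefficients from $\widehat{\overline{k}}$, one gets $C = \bigcup_\ell C_\ell$.

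Next, for each such $\ell$, \autoref{cor:preserving-ORT-adjoining-variables} shows that $A \to C_\ell$ is Ohm--Rush trace, and hence that $C_\ell$ is an intersection flat $A$-module. I would also record that $C_\ell \to C$ is faithfully flat: $T_{n,\rho}(\ell) \hookrightarrow T_{n,\rho}(\widehat{\overline{k}})$ is faithfully flat by \autoref{prop:convergent-powerseries-general-polydisc}\autoref{prop:convergent-powerseries-general-polydisc.b}, and this is preserved under the finite type base change to $[\underline{x},\underline{y}]$ and then under $(\underline{y})$-adic completion of these Noetherian rings (\autoref{rem:faithful-flatness-completion}). In particular each $C_\ell$ is a pure $A$-submodule of $C$ (purity as a $C_\ell$-module map forces purity as an $A$-module map). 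Finally I would invoke the fact that a filtered union of pure, intersection flat submodules is intersection flat --- using that intersection flatness is equivalent to flatness together with the Mittag--Leffler property, that flatness passes to filtered colimits, and that the Mittag--Leffler condition transfers across pure inclusions and can be verified on a filtered union by pure submodules (Raynaud--Gruson \cite{rg71}; see also \cite{DET2023mittagintersectionflat}) --- to conclude that $C$ is intersection flat over $A$.

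The main obstacle I expect is precisely this last permanence statement: making rigorous that intersection flatness of the $C_\ell$ passes to their filtered union $C$. Everything else is essentially bookkeeping --- the construction of the exhaustion by countable-type subfields is straightforward once one exploits the density of $\overline{k}$ in $\widehat{\overline{k}}$ (note this is exactly where it matters that the big field is $\widehat{\overline{k}}$ and not an arbitrary extension of $k$), and the faithful flatness and purity claims are routine. The crux is reducing to the Mittag--Leffler property and using its ``local on countable subsets'' character through pure submodules, which is where the cited machinery of \cite{DET2023mittagintersectionflat, rg71} is needed.
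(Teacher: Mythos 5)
Your proposal is correct and follows essentially the same route as the paper: exhaust $\widehat{\overline{k}}$ by completions of countably generated algebraic subextensions (which land in \autoref{set:k:ell}), apply \autoref{cor:preserving-ORT-adjoining-variables} to get intersection flatness of each $T_{n,\rho}(\ell)[\underline{x}]\llbracket\underline{y}\rrbracket$, and pass to the filtered colimit. The permanence step you flag as the main obstacle is exactly what the paper handles by citing \cite[Corollary~4.3.2]{DET2023mittagintersectionflat} (filtered colimits with universally injective transition maps preserve intersection flatness), with the universal injectivity supplied by the faithful flatness you also verify.
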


\begin{proof}
    We have, following \cite[Theorem~5.5.1]{DESTate}, the collection \(\Sigma\) of algebraic extensions of \(k\) contained in $\overline{k}$ which have a countable basis, and the collection \(\Sigma^{\prime}\) of completions of elements of \(\Sigma\), which are subfields of \(\widehat{\overline{k}}\). It is straightforward to verify that $\Sigma'$ is filtered under inclusion. Furthermore, by an argument similar to the one given in the proof of \cite[Theorem~5.5.1]{DESTate}, we get 
    \(\colim_{\ell \in \Sigma^{\prime}} T_{n,\rho}(\ell) 
    = \bigcup_{\ell \in \Sigma^{\prime}} T_{n,\rho}(\ell) = T_{n,\rho}(\widehat{\overline{k}}),\)
    and consequently, also that 
    \[\colim_{\ell \in \Sigma^{\prime}} T_{n,\rho}(\ell)[\underline{x}]\llbracket\underline{y}\rrbracket 
    = \bigcup_{\ell \in \Sigma^{\prime}} T_{n,\rho}(\ell)[\underline{x}]\llbracket\underline{y}\rrbracket = T_{n,\rho}(\widehat{\overline{k}})[\underline{x}]\llbracket\underline{y}\rrbracket .\]
    Indeed, for the last equality note that any element of $T_{n,\rho}(\widehat{\overline{k}})[\underline{x}]\llbracket\underline{y}\rrbracket$ has only countably many coefficients in $\widehat{\overline{k}}$, which consequently must lie in the completion of an algebraic extension of $k$ in $\widehat{\overline{k}}$ with a countable basis.
    Now, for \(\ell, \ell^{\prime} \in \Sigma\) with \(\ell \ins \ell^{\prime}\),
    by \autoref{cor:berktate:fflat}, 
    \(T_{n,\rho}(\ell)[\underline{x}]\llbracket\underline{y}\rrbracket \inj T_{n,\rho}(\ell')[\underline{x}]\llbracket\underline{y}\rrbracket\) is faithfully flat because $\ell'$ also has a dense subspace over $\ell$ with a countable basis,
    and by \autoref{prop:berktate:ort} each $T_{n,\rho}(\ell)[\underline{x}]\llbracket\underline{y}\rrbracket$ is an Ohm-Rush trace, and hence, an intersection flat $T_{n,\rho}(k)[\underline{x}]\llbracket\underline{y}\rrbracket$-algebra (see \cite[Proposition~4.3.8]{DET2023mittagintersectionflat} for the fact that Ohm-Rush trace implies intersection flat).
    Then
    \(\colim_{\ell \in \Sigma'} T_{n,\rho}(\ell) = T_{n,\rho}(\widehat{\overline{k}})\)
    is also intersection flat as a \(T_{n,\rho}(k)\)-algebra because intersection flatness is preserved under filtered colimits with universally injective transition maps \cite[Corollary~4.3.2]{DET2023mittagintersectionflat}.
\end{proof}

\begin{corollary}
    \label{cor:berktate:FIF}
   If $(k,|\cdot|)$ is a non-Archimedean field of characteristic $p> 0$, then for indeterminates $\underline{x}= x_1,\dots,x_r$ and $\underline{y} = y_1,\dots,y_s$, \(T_{n,\rho}(k)[\underline{x}]\llbracket\underline{y}\rrbracket\) is \(F\)-intersection flat. Consequently, if $R$ is an essentially of finite type $T_{n,\rho}(k)$-algebra that is regular in codimension $0$ and $I \subset R$ is any ideal, then for any $c \in R$ not contained in any minimal prime such that $R_c$ is regular, some power of the image of $c$ in $\widehat{R}^I$ is a big test element of $\widehat{R}^I$.
\end{corollary}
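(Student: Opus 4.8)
The plan is to deduce the $F$-intersection flatness of $A \coloneqq T_{n,\rho}(k)[\underline{x}]\llbracket\underline{y}\rrbracket$ from \autoref{thm:berktate:intflat} by a Frobenius-twisting argument — the analogue for Berkovich Tate algebras of the step from \cite[Theorem~5.5.1]{DESTate} to \cite[Corollary~5.5.3]{DESTate} — and then to obtain the big test element statement exactly as in the proof of \autoref{cor:Test-elements-dagger-affinoid}, with $T_{n,\rho}(k)$ in place of $T_n(k)^\dagger$. Since the field $k$ may be imperfect, the map into the completed algebraically closed base is in general neither regular nor has geometrically reduced fibres, so the descent machinery of \autoref{thm:Radu-Andre-Dumitrescu} and \autoref{prop:descent-FIF} does not apply, and I would argue via purity instead.

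Set $B \coloneqq T_{n,\rho}(\widehat{\overline{k}})[\underline{x}]\llbracket\underline{y}\rrbracket$. First, $A \to B$ is faithfully flat: $T_{n,\rho}(k) \hookrightarrow T_{n,\rho}(\widehat{\overline{k}})$ is faithfully flat by \autoref{prop:convergent-powerseries-general-polydisc}\autoref{prop:convergent-powerseries-general-polydisc.b} (applied to the extension of NA fields $k \hookrightarrow \widehat{\overline{k}}$), and this is preserved by the finite-type base change adjoining $\underline{x}$ and then by $(\underline{y})$-adic completion via \autoref{rem:faithful-flatness-completion}. Second, $\overline{k}$ is algebraically closed, hence perfect, and its completion $\widehat{\overline{k}}$ is again perfect (a $p$-th root of a Cauchy sequence is Cauchy); separating in each monomial the part of the exponent lying in $\{0,\tfrac1p,\dots,\tfrac{p-1}{p}\}^n$ then yields $T_{n,\rho}(\widehat{\overline{k}})^{1/p} = \bigoplus_{m\in\{0,\dots,p-1\}^n} X^{m/p}\, T_{n,\rho}(\widehat{\overline{k}})$, a finite free $T_{n,\rho}(\widehat{\overline{k}})$-module (the integer part of each exponent returns to $T_{n,\rho}(\widehat{\overline{k}})$ because $\rho^{m/p}$ is a fixed positive constant), and adjoining $p$-th roots of the $\underline{x}$ and $\underline{y}$ merely multiplies the rank. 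Hence $F_* B \cong B^{1/p}$ is a finite free $B$-module; combined with \autoref{thm:berktate:intflat} and the fact that a finite direct sum of intersection flat modules is intersection flat, $F_* B$ is an intersection flat $A$-module.

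The crux is that $F_* A \hookrightarrow F_* B$ is a \emph{pure} monomorphism of $A$-modules. Granting this, $F_* A$ is a pure $A$-submodule of the intersection flat $A$-module $F_* B$, hence is itself intersection flat over $A$ — a pure submodule of a flat, resp.\ Mittag-Leffler, module is again flat, resp.\ Mittag-Leffler — which is exactly the statement that $A$ is $F$-intersection flat. To prove purity I would identify $F_* A$ and $F_* B$ with the subrings $A^{1/p} \subseteq B^{1/p}$ carrying their natural module structures. The Frobenius ring isomorphisms $A \xrightarrow{\sim} A^{1/p}$ and $B \xrightarrow{\sim} B^{1/p}$, $a \mapsto a^{1/p}$, are compatible with the inclusions $A \hookrightarrow B$ and $A^{1/p} \hookrightarrow B^{1/p}$, so $A^{1/p} \hookrightarrow B^{1/p}$ is isomorphic as a ring homomorphism to $A \hookrightarrow B$; in particular $B^{1/p}$ is faithfully flat, hence universally injective, over $A^{1/p}$. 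This universal injectivity over $A^{1/p}$ gives it over the subring $A$: for any $A$-module $M$ there is a natural identification $M \otimes_A B^{1/p} \cong (M \otimes_A A^{1/p}) \otimes_{A^{1/p}} B^{1/p}$, and $M \otimes_A A^{1/p} \to (M \otimes_A A^{1/p}) \otimes_{A^{1/p}} B^{1/p}$ is injective because $A^{1/p} \to B^{1/p}$ is universally injective over $A^{1/p}$.

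For the final statement I would reproduce the proof of \autoref{cor:Test-elements-dagger-affinoid}: $R$ is excellent since $T_{n,\rho}(k)$ is (\autoref{prop:convergent-powerseries-general-polydisc}\autoref{prop:convergent-powerseries-general-polydisc.d}), so $R \to \widehat{R}^I$ is regular; consequently $\widehat{R}^I$ is regular in codimension $0$, the image of $c$ lies in no minimal prime of $\widehat{R}^I$, and $(\widehat{R}^I)_c$ is regular. Realizing $\widehat{R}^I$ as a homomorphic image of a localization of some $T_{n,\rho}(k)[\underline{x}]\llbracket\underline{y}\rrbracket$ — which is excellent, regular, and $F$-intersection flat by the first part of the corollary together with stability of $F$-intersection flatness under localization \cite[Theorem~3.1.3]{DESTate} — \autoref{thm:Sharp-big-test-elements} then shows that some power of the image of $c$ in $\widehat{R}^I$ is a big test element. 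The step I expect to be the main obstacle is the purity of $F_* A \hookrightarrow F_* B$: the careful tracking of the $A$- versus $A^{1/p}$-module structures through the Frobenius identifications, precisely because the usual relative-Frobenius descent tools are inapplicable to $A \to B$.
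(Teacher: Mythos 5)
Your proof is correct, and it reaches the first assertion by the same two ingredients as the paper --- \autoref{thm:berktate:intflat} together with descent of intersection flatness along a pure (universally injective) map --- but via a genuinely different factorization. The paper writes $A \hookrightarrow A' \hookrightarrow A^{1/p}$ with $A' = T_{n,\rho}(k^{1/p})[\underline{x}]\llbracket\underline{y}\rrbracket$: it first descends intersection flatness of $A \to T_{n,\rho}(\widehat{\overline{k}})[\underline{x}]\llbracket\underline{y}\rrbracket$ to $A \to A'$ along the faithfully flat (hence pure) inclusion $A' \hookrightarrow T_{n,\rho}(\widehat{\overline{k}})[\underline{x}]\llbracket\underline{y}\rrbracket$, and then composes with the finite free extension $A' \to A^{1/p}$. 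You perform the same two moves in the opposite order: first enlarge $B$ to the finite free module $B^{1/p}$ (using perfectness of $\widehat{\overline{k}}$, which the paper never needs to invoke) to get intersection flatness of $F_*B$ over $A$, and then descend along the pure inclusion $A^{1/p} \hookrightarrow B^{1/p}$, whose purity over $A$ you correctly extract from faithful flatness of $A \to B$ transported through the Frobenius identification and restriction of scalars. Your citation of ``pure submodules of flat Mittag-Leffler modules are flat Mittag-Leffler'' is exactly the descent statement the paper invokes as \cite[Corollary~4.3.2 (2)]{DET2023mittagintersectionflat}. The trade-off is minor: your route avoids introducing $T_{n,\rho}(k^{1/p})$ and the (slightly delicate) identification of $(T_{n,\rho}(k)[\underline{x}]\llbracket\underline{y}\rrbracket)^{1/p}$ as a free module over it, at the cost of the explicit monomial decomposition of $T_{n,\rho}(\widehat{\overline{k}})^{1/p}$; your opening remark that \autoref{prop:descent-FIF} is unavailable here because $k \hookrightarrow \widehat{\overline{k}}$ need not be geometrically reduced when $k$ is imperfect is exactly the right diagnosis of why purity, rather than relative Frobenius, must carry the argument. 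The deduction of the big test element statement is the same as the paper's (which simply defers to the proof of \autoref{cor:Test-elements-dagger-affinoid}).
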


\begin{proof}
    The second assertion follows from the first by a reasoning similar to the one given in the proof of \autoref{cor:Test-elements-dagger-affinoid}, so we omit the details. For the $F$-intersection flatness assertion, note that we have extensions
   \[T_{n,\rho}(k)[\underline{x}]\llbracket\underline{y}\rrbracket \hookrightarrow T_{n,\rho}(k^{1/p})[\underline{x}]\llbracket\underline{y}\rrbracket \hookrightarrow T_{n,\rho}(k^{1/p})[\underline{x}^{1/p}]\llbracket\underline{y}^{1/p}\rrbracket = (T_{n,\rho}[\underline{x}]\llbracket\underline{y}\rrbracket)^{1/p},\]
   where $\underline{x}^{1/p} = x_1^{1/p},\dots,x_r^{1/p}$, and similarly for $\underline{y}^{1/p}$. The extension $T_{n,\rho}(k^{1/p})[\underline{x}]\llbracket\underline{y}\rrbracket \hookrightarrow (T_{n,\rho}[\underline{x}]\llbracket\underline{y}\rrbracket)^{1/p}$ is free, hence Ohm-Rush trace (\cite[Lemma~4.1.5 (2)]{DET2023mittagintersectionflat}), and hence intersection flat. Since the property of being intersection is preserved under composition, it suffices to $T_{n,\rho}(k)[\underline{x}]\llbracket\underline{y}\rrbracket \hookrightarrow T_{n,\rho}(k^{1/p})[\underline{x}]\llbracket\underline{y}\rrbracket$ is intersection flat. Choosing an embedding $k^{1/p} \hookrightarrow \widehat{\overline{k}}$, we get $T_{n,\rho}(k^{1/p})[\underline{x}]\llbracket\underline{y}\rrbracket \hookrightarrow T_{n,\rho}(\widehat{\overline{k}})[\underline{x}]\llbracket\underline{y}\rrbracket$ is a faithfully flat ring map, and hence, by restriction of scalars, a universally injective $T_{n,\rho}(k)[\underline{x}]\llbracket\underline{y}\rrbracket$-linear map. Since $T_{n,\rho}(\widehat{\overline{k}})[\underline{x}]\llbracket\underline{y}\rrbracket$ is an intersection flat $T_{n,\rho}(k)[\underline{x}]\llbracket\underline{y}\rrbracket$-algebra (\autoref{thm:berktate:intflat}), by universally injective/pure descent of intersection flatness (\cite[Corollary~4.3.2 (2)]{DET2023mittagintersectionflat}), we get $T_{n,\rho}(k^{1/p})[\underline{x}]\llbracket\underline{y}\rrbracket$ is an intersection flat $T_{n,\rho}(k)[\underline{x}]\llbracket\underline{y}\rrbracket$-algebra, as desired.  
\end{proof}


\section{Acknowledgements}
Many ideas of this paper originated in the related works \cite{DESTate,DET2023mittagintersectionflat} of the first and third authors with Neil Epstein and Karl Schwede. The authors thank them for helpful conversations. The first author also thanks Takumi Murayama for a helpful conversation about Grothendieck localization. The second author thanks Kiran Kedlaya and Nathan Wenger for helpful conversations. The second author would like to thank Tomer Schlank for 
inspiration.

\bibliographystyle{skalpha}
\bibliography{main,preprints}

\end{document}